\documentclass[10pt]{article} 
\usepackage[utf8x]{inputenc}
\usepackage{amssymb}
\usepackage{amsmath}
\usepackage{amsfonts}
\usepackage{amscd}
\usepackage[all,cmtip]{xy}
\usepackage{makeidx}
\usepackage{titlesec}
\usepackage{fancyhdr}
%\usepackage{color}
%  de winedt
\usepackage[colorlinks]{hyperref}
\usepackage [pdftex]{graphicx}

\usepackage{thmtools, thm-restate}  % nuevo
\usepackage [pdftex]{color}
\usepackage{tikz}
\usepackage{fancyhdr}
 \usepackage{latexsym}
\usepackage{geometry,enumerate,latexsym,tabularx}
\usepackage{multicol}
\usepackage{wrapfig}
%   la quite yo \usepackage{caption}%
%\usepackage{titlesec}
\usepackage{stmaryrd}
\usepackage[cm]{fullpage}
\usepackage{layout}
\usepackage{eqnarray}
\usepackage{array}

% de winedt

%\newcommand{\ent}{\mathcal{O}}
%\newcommand{\C}{\mathbb{C}}
%\newcommand{\R}{\mathbb{R}}
%\newcommand{\Q}{\mathbb{Q}}
%\newcommand{\N}{\mathbb{N}}
%\newcommand{\Z}{\mathbb{Z}}
%\newcommand{\A}{\mathbb{A}}
%\newcommand{\F}{\mathbb{F}}
%\newcommand{\p}{\mathfrak{p}}
%\newcommand{\cat}{\mathcal{C}}
%\newcommand{\cF}{\mathcal{F}}
%\newcommand{\bH}{\mathbb{H}}
%\newcommand{\bS}{\mathbb{S}}
%\newcommand{\bT}{\mathbb{T}}
%\DeclareMathOperator{\Aff}{Aff}
%\DeclareMathOperator{\SO}{SO}
%\DeclareMathOperator{\SU}{SU}

% de series de Eisenstein
%\def\iwd{\mathrm{iwd}}
%\def\A{\mathscr{A}}
\def\Aff{\mathrm{Aff}} % nuevo
\def\bH{\mathbb{H}} % nuevo
\def\bS{\mathbb{S}} % nuevo
\def\bT{\mathbb{T}} % nuevo
\def\cF{\mathcal{F}} % nuevo
\def\C{\mathbb{C}}
\def\H{\mathbb{H}}
\def\Im{\mathrm{Im}}
\def\N{\mathbb{N}}

\def\Q{\mathbb{Q}}  % nuevo

\def\PSL{\mathbf{PSL}}
\def\R{\mathbb{R}}
\def\Re{\mathrm{Re}}
\def\S{\mathbb{S}}
\def\SL{\mathbf{SL}}
\def\SO{\mathbf{SO}}
\def\SU{\mathbf{SU}}
\def\Z{\mathbb{Z}}

\titleformat{\section}[hang]
{\normalfont\filright\large}{\thesection. }{0pt}
{\upshape\bfseries}

\titleformat{\subsection}[hang]
{\itshape}{\thesubsection \ - }{0pt}
{}

\usepackage{amsthm}
\theoremstyle{plain}
\newtheorem{theo}{Theorem}[section]
\newtheorem{prop}[theo]{Proposition}
\newtheorem{lemm}[theo]{Lemma}
\newtheorem{coro}[theo]{Corollary}
\newtheorem{rem}[theo]{Remark}
\newtheorem*{tio}{Theorem}
\theoremstyle{remark}

\theoremstyle{definition}
\newtheorem{defi}[theo]{Definition}
\newtheorem*{Example-non}{Example}
\newtheorem*{Lemma-non}{Lemma}
\newtheorem*{Proposition-non}{Proposition}
\newtheorem*{Corollary-non}{Corollary}
\newtheorem*{Definition-non}{Definition}

\def\ie{\emph{i.e.\,}}

\providecommand{\abs}[1]{\lvert#1\rvert}
\providecommand{\norm}[1]{\lVert#1\rVert}

\title{Eisenstein series and equidistribution of Lebesgue probability measures on compact leaves of the horocycle foliations of  Bianchi 3-orbifolds}
\author{\small Otto Romero \footnote{Instituto de Matem$\acute{\mbox{a}}$ticas, (Unidad Cuernavaca) UNAM. Av. Universidad s/n. Col. Lomas de Chamilpa CP 62210, Cuernavaca, Mexico. \texttt{alberto@matcuer.unam.mx}}\,\,\,, Alberto Verjovsky  \footnote{Instituto de Matem$\acute{\mbox{a}}$ticas, (Unidad Cuernavaca) UNAM. Av. Universidad s/n. Col. Lomas de Chamilpa CP 62210, Cuernavaca, Mexico. \texttt{ottohrg@gmail.com}}}

\date{\today}
\begin{document}

\maketitle

\begin{abstract}
Inspired by the works of  Zagier \cite{Zag79} and Sarnak \cite{Sar80},  we study the probability measures
$\nu(t)$ with support on the flat tori which are the compact orbits of the maximal unipotent subgroup acting holomorphically on the positive orthonormal frame bundle $\mathcal{F}({M}_D)$ of 3-dimensional hyperbolic Bianchi orbifolds ${M}_D=\bH^3/\widetilde{\Gamma}_D$, of finite volume and with only one cusp. Here 
$\widetilde{\Gamma}_D\subset\PSL(2,\C)$ is the  Bianchi group corresponding to the
imaginary quadratic field $\Q(\sqrt{-D})$. Thus $\widetilde{\Gamma}_D$ consist of M\"obius transformations  with coefficients in the ring of integers of $\Q(\sqrt{-D})$.
If $l \in \N$, $k,m \in \Z$  are such that $k,m \in [-l,l]$, 
the appropriate Eisenstein series $\widehat{E}_{km}^l ( g,s)$ (which are defined and analytic for $\Re (s) \geq 1$) 
admit an  analytic continuation to all of $\C$, except when $l=k=m=0$ in which case there is a pole for $s=1$. Using this fact we show that the elliptic curves which are the compact orbits of the complex horocycle flow $h_T:\mathcal{F}({M}_D)\longrightarrow\mathcal{F}({M}_D)$  ($T \in \C$) are expanded by the real geodesic flow $g_t, t\in\R$, and they become equidistributed in  $\mathcal{F}({M}_D)$ with respect to the normalized Haar measure as $t \longrightarrow\infty$. This follows from the
equidistribution of Lebesgue probability measures on compact leaves of the horocycle foliations in the orthonormal frame bundle of $M_D$, which is equal to the quotient
$\PSL(2,\C)/\widetilde{\Gamma}_D$. The same equidistribution property occurs for the spin bundle of $M_D$ which is the homogeneous space $\SL(2,\C)/\Gamma_D$, where $\Gamma_D$ is the Bianchi subgroup in $\SL(2,\C)$ 
consisting of matrices with entries in the ring of integers of $\Q(\sqrt{-D})$.
Our method uses the theory of spherical harmonics in the unit tangent bundle orbifold  $T_1(M_D)={\SO}(2){\backslash}\PSL(2,\C)/\widetilde{\Gamma}_D=T_1(\bH^3)/{\Gamma^\ast_D}$,
where ${\Gamma^\ast_D}$ is the action of $\widetilde\Gamma_D$ on the unit tangent bundle of $\H^3$ via the differential of the elements of $\widetilde{\Gamma}_D$. %, \ie $\,\,\,v_x \longrightarrow {d\gamma(v_x)}$  
%($\,\gamma\in\widetilde{\Gamma}\,_D,\,v_x\in{T(M_D),\,\, ||v_x||=1}$). 

\bigskip
\noindent 2010 \textit{Mathematics Subject Classification}. 37D40, 51M10, 11M36.

\noindent \textit{Keywords}. Complex horocycle flows and foliations, Bianchi groups and 3-orbifolds, equidistribution.

\end{abstract}

\setcounter{tocdepth}{5}
\tableofcontents

\section*{Introduction}
The theory of equidistribution of probability measures supported on compact horocycles has been studied by many authors, for instance, Bowen and Marcus \cite{BM}, Dani \cite{Da, DS}, Flaminio and Forni \cite{FF}, Furstenberg \cite{Fu}, Ratner \cite{Ra}, Selberg, Smillie \cite{DS}, Sarnak \cite{Sar80}, Zagier \cite{Zag79}, among many others. This study is done using different techniques and viewpoints but in essence is the study of unipotent flows in homogeneous spaces of the form $G/\Gamma$, where
$G$ is an algebraic group and $\Gamma$ is a discrete subgroup such that $G/\Gamma$ has finite Haar measure.

In this paper we study the case when $G=\PSL(2,\C)$ and $\Gamma$ is a Bianchi subgroup of certain imaginary quadratic fields. 

\bigskip
More precisely, let $D$ be a square-free positive integer. Let $\Q(  \sqrt{-D})$  be the corresponding imaginary quadratic field
and $\mathcal{O}_D$ its ring of integers. Let $\widetilde{\Gamma}_D = \PSL(2,\mathcal{O}_D)$ be the corresponding Bianchi subgroup in $\PSL(2,\C)$ and $\Gamma_D =\SL(2,\mathcal{O}_D)$ the  Bianchi subgroup in $\SL(2,\C)$. The quotients ${M}_D= \H^3 /  \widetilde{\Gamma}_D$ are the {3-dimensional Bianchi orbifolds}.
Since $\PSL(2,\C)$ acts, via the differential, simply and transitively on the orthonormal frame bundle (with respect to the hyperbolic metric) of positively oriented frames of hyperbolic 3-space $\H^3$ it follows that $\PSL(2,\C)$ can be identified with the orthonormal frame bundle  $\mathcal{F}(\H^3)$ of hyperbolic 3-space, and the space of left cosets can be identified with the frame bundle $\mathcal{F}({M}_D)$ of ${M}_D$.

\bigskip
The group $\PSL(2,\C)$ acts \emph{on the left} on the quotient space of left cosets
$\mathcal{F}({M}_D)=\PSL(2,\C)/\widetilde{\Gamma}_D$. This is a non-K\"ahler complex smooth 3-fold.
The complex affine group $\Aff(\C)\subset\PSL(2,\C)$ consisting of M\"obius transformations of the form
$z\mapsto{az+b},\quad a\in\C^*, b\in\C$,  acts on the left, holomorphically and locally freely on the frame bundle
$\mathcal{F}({M}_D)$. The affine group corresponds to upper triangular matrices

$$
\left( \begin{matrix}
e^{\frac{T}2} & S \\
0 & e^{-\frac{T}2}  \\
\end{matrix}\right),\quad \quad T, S \in\C.
$$
This action defines two holomorphic flows
$g_T:\mathcal{F}({M}_D)\longrightarrow\mathcal{F}({M}_D)$ and
$h_S:\mathcal{F}({M}_D)\longrightarrow\mathcal{F}({M}_D)$ ($T,S\in\C$) determined by the diagonal matrices and unipotent matrices, respectively. They are the \emph{complex frame geodesic flow} and
\emph{complex frame horocycle flow}, respectively. These flows are the orbits of two holomorphic vector fields $X$ and $Y$ on $\mathcal{F}({M}_D)$ satisfying the Lie bracket relation $[X,Y]=Y$ (they are Killing vector fields of the locally free action of $\Aff(\C))$.
This bracket relation is equivalent to $dg_{_T}\circ{Y}\circ{g_{_{-T}}}:=\left(g_{_T}\right)^*(Y)=e^TY$. Therefore $|\left(g_{_T}\right)^*(Y)|=e^{\Re \, T}|Y|$    ($T=\Re \, T+i\Im\, T$) and $|\left(g_{_t}\right)^*(Y)|=e^{t}|Y|, t\in\R$, and $|\left(g_{_t}\right)^*(Y)|$ tends exponentially to infinity as $t\longrightarrow+\infty$.  Thus the flow $\left\{g_T\right\}_{T\in\C}$ is a holomorphic Anosov flow (see definition in \cite{Gh} and \cite{GV}).

\bigskip
The orbits of $\left\{g_T\right\}_{T\in\C}$ are either immersed copies of $\C^*$ or elliptic curves. There is a countable number of elliptic curves which correspond to the closed geodesics of the orbifold ${M}_D$. The moduli of the elliptic curves depends on the length of the corresponding geodesic. By a complex version of a theorem by Dani \cite{Da} every orbit of the complex horocycle flow $\left\{h_S\right\}_{S\in\C}$  is either a densely embedded copy of $\C$ or an elliptic curve and all the orbits which are elliptic curves are of the form $g_T(E)$, $T\in\C$ where $E$ is an elliptic curve of area whose modulus depends only on a unit of the imaginary quadratic field (we are assuming that the orbifold ${M}_D$ has only one cusp).
\emph{In fact the only ergodic invariant measures for the complex horocycle flow are either the probability Lebesgue measures on the elliptic curves or normalized Liouville measure}.\\

We will construct series $\widehat{E}_{km}^l ( g,s)$ which play the role of Eisenstein series in our context.
The meromorphic continuation of this series (theorem (\ref{conje}))
implies the following result in dimension 3, which is analogous to theorem $1$ of Sarnak in
 \cite{Sar80} for dimension 2.
\begin{theo}\label{TPT}
Let $D \in  \{ 1,2,3,7,11,19,43,67,163 \}$ and $M_D= \H^3 /  \Gamma_D$ the Bianchi hyperbolic 3-orbifolds  with only one cusp (\emph{i.e.} determined by an imaginary quadratic field of class number one). Then there exist numbers $1 > s_1 > s_2 > \cdots > s_p > 0$ ($p$ could be zero, in which case such numbers don't appear) and corresponding distributions  $\nu^1 , \nu^2 , \cdots , \nu^p $ in the unit tangent bundle orbifold $T_1 (M_D)$,  such that if $f: T_1 (M_D) \longrightarrow
\C$ is a smooth function with compact support then:
\[
      \nu (\lambda)(f) \; = \;  \nu (f) \, + \, \lambda^{1-s_1} \, \nu^1 (f)  +  \cdots +  \lambda^{1-s_p} \, \nu^p  (f) +
          \, o(\lambda)  \; \; \; \; \; (\lambda \longrightarrow 0),
\]
where $\nu= \frac{1}{4 \pi \, \text{Vol} ( M_D)} \, \frac{\sin \vartheta \,  dx  dy  d\lambda  d\vartheta  d\varphi}{\lambda^3}$
is normalized Liouville measure in $T_1 (M_D)$. 
\end{theo}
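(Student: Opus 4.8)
The plan is to follow the strategy of Sarnak \cite{Sar80}, transplanted to the rank-one cusp of the Bianchi orbifold, where the cross-section of the cusp is a flat two-torus (an elliptic curve) rather than a circle. First I would fix horospherical coordinates $(x,y,\lambda,\vartheta,\varphi)$ in the cusp of $T_1(M_D)$, where $(x,y)$ parametrize the horocyclic torus, $\lambda$ is the height in the upper-half-space model of $\H^3$, and $(\vartheta,\varphi)$ are the fiber (spherical) coordinates, so that the invariant volume is the Liouville form $\frac{\sin\vartheta\,dx\,dy\,d\lambda\,d\vartheta\,d\varphi}{\lambda^{3}}$ appearing in the statement. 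In these coordinates the closed complex-horocycle orbit at height $\lambda$ is the flat torus $T_\lambda\cong\C/\Lambda$, its hyperbolic area grows like $\lambda^{-2}$ as $\lambda\to 0$, and $\nu(\lambda)$ is the normalized Lebesgue probability measure on $T_\lambda$. The object to analyze is $\nu(\lambda)(f)=\int_{T_\lambda}f\,d\nu(\lambda)$, i.e. the average of $f$ over the torus that is expanded by the geodesic flow.

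Next I would decompose $f$ into spherical harmonics along the compact fiber directions, writing $f=\sum_{l,k,m}f^{\,l}_{km}$ with $f^{\,l}_{km}$ in the $(l,k,m)$ isotypic component, so that $\widehat{E}_{km}^l(\cdot,s)$ is the Eisenstein series matched to that component. For each harmonic I would form the torus (zeroth-Fourier) average $a^{\,l}_{km}(\lambda)$ and take its Mellin transform in $\lambda$. The essential device is the unfolding identity: replacing the sum over cusp-stabilizer cosets defining $\widehat{E}_{km}^l$ by integration over the unfolded cusp, one obtains, up to the normalizing factor from the volume element and for $\Re(s)$ large,
\[
\int_{T_1(M_D)} f^{\,l}_{km}(g)\,\overline{\widehat{E}_{km}^l(g,\bar s)}\,dg \;=\; \int_0^{\infty} a^{\,l}_{km}(\lambda)\,\lambda^{\,s}\,\frac{d\lambda}{\lambda^{3}} ,
\]
which expresses the Mellin transform of the orbit average as the inner product of $f$ against the Eisenstein series. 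The nonzero Fourier modes on $T_\lambda$, as well as the cuspidal (discrete-spectrum) part of $f$, have vanishing zeroth coefficient and contribute only rapidly decaying terms as $\lambda\to 0$; these I would collect into the error.

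Then I would apply Mellin inversion and shift the contour to the left past $\Re(s)=0$. By Theorem \ref{conje} the only pole on the line $\Re(s)=1$ occurs when $l=k=m=0$, at $s=1$, and its residue is the constant function; this produces the leading term $\nu(f)$, the normalized Liouville measure. The finitely many poles $1>s_1>\cdots>s_p>0$ of the continued Eisenstein family in the interval $(0,1)$ — arising from the residual and exceptional spectrum — yield the intermediate terms $\lambda^{1-s_j}\,\nu^j(f)$, where the distribution $\nu^j$ is defined by pairing $f$ against the corresponding residue of the full harmonic family $\{\widehat{E}_{km}^l\}$. Pushing the contour to $\Re(s)<0$ and estimating the remaining vertical integral, together with the absorbed Fourier and cuspidal contributions, gives the error $o(\lambda)$, exactly as in the two-dimensional result of \cite{Sar80} and in harmony with Dani's classification \cite{Da} of the invariant measures.

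The step I expect to be the main obstacle is the contour-shift estimate carried out uniformly over the harmonic expansion: one must simultaneously control the shifted integral and the sum over $(l,k,m)$. This requires (i) polynomial bounds on $\widehat{E}_{km}^l(g,s)$ along vertical lines and in the indices $(l,k,m)$, and (ii) enough decay of the harmonic coefficients of $f$ to justify interchanging the summation with the contour integration and to certify that the remainder is genuinely $o(\lambda)$ rather than merely $O(\lambda)$. The compact support and smoothness of $f$ furnish the rapid decay needed in (ii); the delicate point is the uniformity of the Eisenstein bounds across the whole family of harmonics, verifying that no poles beyond $s_1,\dots,s_p$ are crossed, and checking that the nonzero torus Fourier modes and the cusp forms are truly negligible in the limit.
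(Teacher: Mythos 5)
Your overall route coincides with the paper's: the fiberwise spherical--harmonic decomposition, the matched Eisenstein series $\widehat{E}_{km}^l$, the unfolding identity expressing the Mellin transform of $\nu(\lambda)(f)$ (this is Proposition (\ref{PImp}), with the Wigner-matrix bookkeeping supplying the extra index $k$ -- the decomposition of $f$ itself only carries $(l,m)$), the continuation of Theorem (\ref{conje}), and the identification of the residues at $s=1$ and at $s_1,\dots,s_p$ with $\nu$ and the $\nu^a$. The step that genuinely fails is your contour shift: you propose to push the line of integration past $\Re(s)=0$ into $\Re(s)<0$ and to read the error off the shifted vertical integral. This cannot be done unconditionally. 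By (\ref{phiCoef0}) the constant term of $E(P,s)$ involves $\phi(s)=\frac{\pi}{s\,\abs{\Lambda_D}}\,\frac{\zeta_D(s)}{\zeta_D(s+1)}$, so the continued Eisenstein series, hence $\mathfrak{M}(f,s)$, acquires poles at the points $s=\rho-1$ where $\rho$ runs over the nontrivial zeros of $\zeta_D$: infinitely many of these lie on $\Re(s)=-\tfrac12$, and since no zero-free vertical strip $\Re(\rho)>1-\delta$ is known for $\zeta_D$, one cannot move the contour to any line $\Re(s)=-\delta$, $\delta>0$, without crossing poles. A shift past $0$ yielding an error $O(\lambda^{1+\delta})$ is exactly the kind of statement the paper's final remark identifies as equivalent to (quasi-)Riemann hypothesis assertions for $\zeta_D$; it is neither available nor needed.

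What the paper does instead is stop at $\Re(s)=0$: Mellin inversion on $\Re(s)=3$ (Lemma (\ref{LemasPrevios8})), the residue theorem on the strip $0\le\Re(s)\le 3$ with Paley--Wiener killing the horizontal edges, and then the remaining integral $\frac{1}{2\pi i}\int_{-i\infty}^{+i\infty}\mathfrak{M}(f,s)\,\lambda^{1-s}\,ds$ is shown to be $o(\lambda)$ by the Riemann--Lebesgue lemma (Lemma (\ref{LemasPrevios11})): the saving beyond the trivial $O(\lambda)$ comes from the oscillation of $\lambda^{-it}$, not from moving the contour further left. Making Riemann--Lebesgue legitimate requires the bound $\int_{-\infty}^{\infty}\abs{\mathfrak{M}(f,it)}\,dt<\infty$, and this is the analytic content your sketch lacks: the paper derives it from the differential equation $\Delta^2E(P,s)=(s^2-1)^2E(P,s)$, integration by parts, Cauchy--Schwarz against the truncated series $E^T(\cdot,s)$, and the Maass--Selberg-type estimates of \cite{Els98} (Lemmas (\ref{LemasPrevios6}) and (\ref{LemasPrevios10})). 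Your item (i), pointwise polynomial bounds on vertical lines, would by itself give only $O(\lambda)$. Note finally that the paper sidesteps the uniformity-in-$(l,k,m)$ obstacle you flag: all estimates on vertical lines are performed on the single scalar series $E(P,s)$ (Lemma (\ref{LemasPrevios4})), and the matrix-valued series $H^l_{km}$ enter only in the residue identification at $s=1$ (Lemma (\ref{LemasPrevios7})), where the harmonic expansion of the fixed smooth $f$ converges harmlessly.
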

\begin{Corollary-non}
\[
   \displaystyle  \lim_{\lambda  \longrightarrow 0 } \nu(\lambda) \; = \;  \nu   \; \; \; \; \text{(weakly). }
\]

\end{Corollary-non}

% \textcolor[rgb]{1,0,0}{Rojo}\\

\subsection*{Outline of the paper}
In section 1 we review the action of
 $\SL(2,\C)$ on $\H^3$,  Iwasawa decomposition and Iwasawa coordinates.   We recall some properties of Bianchi orbifolds of class number one,  the stabilizer of the point at infinity and the maximal unipotent subgroup.

\bigskip
In section  2 we study some properties of the Wigner matrix $D_{km}^l \big( R  \big)$ with $R \in \SO(3)$.

\bigskip
In section 3 we will define rotations $R(d\sigma,P) \in \SO(3)$, where $\sigma \in \SL(2,\C)$ and $P \in \H^3$. We will define and study Eisenstein series $\widehat{E}_{km}^l (g,s)$  and their ``projections'', given by series  $E_{km}^l (P_g,s)$. We will analyze their convergence, invariance, Fourier expansions and their analytic or meromorphic continuation.

\bigskip
In section 4 we review about the geodesic flow and horocycle foliations.

\bigskip
Finally in section 5 we prove theorem (\ref{TPT}) using the meromorphic continuation of Eisenstein series $\widehat{E}_{km}^l (g,s)$ (see theorem (\ref{conje})).

%\tableofcontents
%%%%%%%%%%%%%%%%%%%%%%%%%%%%%%%%%%%%%%%%%%%%%%%%%%%%%%%%%%%%%%%%%%%%%%%%%%%%%%%%%%%%%%%%%%%%%%
%%%%%%%%%%%%%%%%%%%%%%%%%%%%%%%%%%%%%%%%%%%%%%%%%%%%%%%%%%%%%%%%%%%%%%%%%%%%%%%%%%%%%%%%%%%%%%

\section{Preliminaries: Hyperbolic geometry,
Bianchi groups and Bianchi orbifolds.}

We will use the upper half-space model for hyperbolic space  $ \H^3 $:
\[
  \H^3  \; = \; \{ (z, \lambda) \, ; \, z \in \C , \lambda \in \R^+ \} =
              \{ (x,y, \lambda) \, ; \, x,y \in \R , \lambda > 0 \},
\]
provided with the hyperbolic Riemannian metric:

\[
       ds^2 \; = \; \frac{dx^2+dy^2 + d\lambda^2}{\lambda^2}.
\]
We will denote a point $P \in \H^3$ as follows:
\[
    P \; = \; (z,\lambda) = (x,y,\lambda) = z + \lambda j,
\]
where $z=x+iy$, $\lambda>0$, and $j$ is the well know quaternion.  We recall that
\[
   \displaystyle   \SL(2,\C) \; = \; \Bigg \{ \left(  \begin{matrix}
a & b \\
c & d \\
\end{matrix}  \right) ; a,b,c,d \in \C, \; \; ad-bc=1    \Bigg \}
\]
and
\[
      \PSL( 2,\C) \; = \; \SL(2,\C) / \{\pm I\}.
\]

The group $\SL(2,\C)$ acts on $\H^3$ as follows:
\begin{equation}\label{Pre3}
    \displaystyle  \left(  \begin{matrix}
a & b \\
c & d \\
\end{matrix}  \right)  ( z + \lambda  j ) \, =  \,
    \frac{ (a z  + b)(\bar{c} \bar{z} + \bar{d}) + a \bar{c} \, \lambda^2 }{  \vert cz + d \vert^2 + \vert c \vert^2 \, \lambda^2 } +
         \frac{\lambda}{ \vert cz + d \vert^2 + \vert c \vert^2 \, \lambda^2 }   \, j.
\end{equation}

\begin{rem}The action is not effective since two matrices which differ by a sign determine the same M\"obius transformation.The group  $\PSL( 2,\C)$ acts effectively on $ \H^3 $
 by orientation-preserving isometries of the hyperbolic metric. If $\Gamma$ is a discrete subgroup of $\SL(2,\C)$
 and $\widetilde\Gamma\subset\PSL(2,\C)$ is the corresponding group of M\"obius transformations 
 then $M_{\Gamma}:=\H^3/\Gamma=\H^3/\widetilde\Gamma:=M_{\widetilde{\Gamma}}$, so sometimes we use one or the other quotient to denote this manifold.
\end{rem}

% ========================
\begin{lemm}\label{DIwasawa}
(Complex Iwasawa decomposition)
If $g \; = \; \left(  \begin{matrix}
a & b \\
c & d \\
\end{matrix}  \right) \, \in \, \SL(2,\C)$ then $g$ can be written in a unique way by the following formula:
\[
    \displaystyle \left(  \begin{matrix}
a & b \\
c & d \\
\end{matrix}  \right) = \left(  \begin{matrix}
1 & \frac{ a \bar{c} +  b \bar{d} }{ \vert c \vert^2 + \vert d \vert^2 }\\
0 & 1 \\
\end{matrix}  \right)  \left(  \begin{matrix}
\frac{ 1}{ \sqrt{ \vert c \vert^2 + \vert d \vert^2 } } & 0 \\
0 & \sqrt{ \vert c \vert^2 + \vert d \vert^2 } \\
\end{matrix}  \right)   \left(  \begin{matrix}
\frac{ \bar{d}}{ \sqrt{ \vert c \vert^2 + \vert d \vert^2 } } & \frac{ - \bar{c}}{ \sqrt{ \vert c \vert^2 + \vert d \vert^2 } } \\
\frac{ c}{ \sqrt{ \vert c \vert^2 + \vert d \vert^2 } } & \frac{ d}{ \sqrt{ \vert c \vert^2 + \vert d \vert^2 } } \\
\end{matrix}  \right).
\]
\end{lemm}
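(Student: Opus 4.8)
The statement is the complex Iwasawa decomposition $g=nak$ of $\SL(2,\C)$ into an upper unipotent factor $n\in N$, a positive real diagonal factor $a\in A$, and a special unitary factor $k$; indeed the rightmost matrix has the shape $\left(\begin{smallmatrix}\alpha & \beta\\ -\bar\beta & \bar\alpha\end{smallmatrix}\right)$ with $\vert\alpha\vert^2+\vert\beta\vert^2=1$, which is exactly the general element of $\SU(2)$. The plan is to split the proof into existence (the three displayed matrices really multiply to $g$) and uniqueness (a factorization of this type $N\,A\,\SU(2)$ is unique).

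For existence, write $\rho=\sqrt{\vert c\vert^2+\vert d\vert^2}$; since $(c,d)\neq(0,0)$ for a determinant-one matrix, we have $\rho>0$ and every entry is defined. First I would multiply the diagonal and unitary factors, obtaining $\left(\begin{smallmatrix}\bar d/\rho^2 & -\bar c/\rho^2\\ c & d\end{smallmatrix}\right)$, so the bottom row is already $(c,d)$ as required, and left-multiplication by the unipotent matrix leaves that row untouched. The whole content therefore sits in the top row. Setting $t=(a\bar c+b\bar d)/\rho^2$, a direct computation gives the $(1,1)$ entry $\big(\bar d+a\vert c\vert^2+bc\bar d\big)/\rho^2$ and the $(1,2)$ entry $\big(-\bar c+a\bar c d+b\vert d\vert^2\big)/\rho^2$. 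Reducing the desired equalities ``$=a$'' and ``$=b$'' to $\bar d(1+bc)=a\vert d\vert^2$ and $\bar c(ad-1)=b\vert c\vert^2$ respectively, one sees that each is an immediate consequence of $ad-bc=1$ (equivalently $1+bc=ad$). I emphasize that this manipulation never divides by $c$ or $d$, so the degenerate cases $c=0$ and $d=0$ are handled uniformly.

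For uniqueness, observe that the product of the first two factors is an upper triangular matrix with positive real diagonal $1/\rho,\rho$; let $B\subset\SL(2,\C)$ be the subgroup of all such matrices. The key point is that $B\cap\SU(2)=\{I\}$: a unitary upper triangular matrix must be diagonal, and a diagonal element of $\SL(2,\C)$ which is unitary with positive diagonal entries is forced to be $I$. Hence if $n_1a_1k_1=n_2a_2k_2$ are two factorizations of the displayed type, then $(n_2a_2)^{-1}(n_1a_1)=k_2k_1^{-1}$ lies in $B\cap\SU(2)=\{I\}$, whence $k_1=k_2$ and $n_1a_1=n_2a_2$; comparing the positive real diagonal entries gives $a_1=a_2$, and then $n_1=n_2$.

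I do not expect a genuine obstacle here: the only care required is the substitution $bc=ad-1$ in the top row and the observation that the formula degenerates gracefully when $c$ or $d$ vanishes. Conceptually the explicit formula is not arbitrary — it records that $B$ acts simply transitively on $\H^3$ while $\SU(2)$ is the stabilizer of $j=(0,0,1)$, so that $t$ and $1/\rho^2$ are precisely the horizontal and height coordinates of $g\cdot j$ read off from the action formula \eqref{Pre3}. If one prefers, this yields a computation-free existence argument: set $P=g\cdot j$, let $b\in B$ be the unique element with $b\cdot j=P$, and put $k=b^{-1}g$, which fixes $j$ and is therefore in $\SU(2)$.
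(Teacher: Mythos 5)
Your proof is correct. There is nothing to compare it against in the paper: the lemma is stated there without any proof at all (the text passes immediately to the Iwasawa coordinates $(z,\lambda,A)$), so it is treated as a standard fact whose verification is left to the reader, and your write-up supplies exactly what is left implicit. Your existence computation is the direct verification one would expect — the observation that the bottom row $(c,d)$ is already produced by the last two factors and untouched by the unipotent one, and that the top-row identities reduce via $ad-bc=1$ to $\bar d(1+bc)=a\lvert d\rvert^2$ and $\bar c(ad-1)=b\lvert c\rvert^2$, all without dividing by $c$ or $d$ — and your uniqueness argument via $B\cap\SU(2)=\{I\}$ (with $B$ the upper triangular matrices with positive real diagonal) is the standard group-theoretic one; the lemma's claim of uniqueness is otherwise never justified in the paper. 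Your closing geometric remark is also consonant with how the paper actually uses the decomposition: formula (\ref{Pre6}), $\big(n[z]\,a[\lambda]\,A\big)(j)=z+\lambda j$, records precisely that $B=NA$ acts simply transitively on $\H^3$ while $\SU(2)$ stabilizes $j$, which is your computation-free existence argument; the explicit entries $z=(a\bar c+b\bar d)/(\lvert c\rvert^2+\lvert d\rvert^2)$ and $\lambda^{-1}=\lvert c\rvert^2+\lvert d\rvert^2$ are then read off from the action formula (\ref{Pre3}).
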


Hence $g$ is determined by the triple $(z,\lambda, A)$ with $z\in\C$, $\lambda>0$ and $A\in{\SU(2)}$. We call  $(z,\lambda, A)$ the \emph{Iwasawa coordinates} of $g$. One has:

\begin{equation}\label{Pre4}
    \displaystyle  z= \frac{ a \bar{c} +  b \bar{d} }{\vert c \vert^2 + \vert d \vert^2 },  \; \;  \; \; \;
    \lambda = \vert c \vert^2 + \vert d \vert^2, \; \; \; \; \; A = \left(  \begin{matrix}
\frac{ \bar{d}}{ \sqrt{ \lambda } } & \frac{ - \bar{c}}{ \sqrt{ \lambda } } \\
\frac{ c}{ \sqrt{ \lambda } } & \frac{ d}{ \sqrt{ \lambda } } \\
\end{matrix}  \right) \in \SU(2).
\end{equation}
Using Iwasawa coordinates we can write  $g$ as follows:
\[
       g \; = \; n[z] \, a[\tfrac{1}{\lambda}] \, A, 
        %g_{z+ \frac{1}{\sqrt{\lambda}}j,A}.
\]
where
\[
n[z] := \left(  \begin{matrix}
1 & z \\
0 & 1 \\
\end{matrix}  \right) \hspace{4cm}
a[\lambda] := \left(  \begin{matrix}
\sqrt{\lambda} & 0 \\
0 & \frac{1}{\sqrt{\lambda}} \\
\end{matrix}  \right). 
\]

%Reciprocally, if $z \in \C$,  $\lambda > 0$ and $A \in \SU(2)$,  we denote by $ g_{z+\lambda j,A}$ the matrix in  $\SL(2,\C)$ given by the formula:
%\begin{equation*}%\label{Pre5}
%    \displaystyle   g_{z+\lambda j,A}  \; := \;   \left(  %%%%%%\begin{matrix}
%1 & z \\
%0 & 1 \\
%\end{matrix}  \right)  \left(  \begin{matrix}
%\lambda & 0 \\
%0 & \frac{1}{\lambda} \\
%\end{matrix}  \right) A.
%\end{equation*}

\bigskip
A useful formula (a particular case of formula (\ref{Pre3})) is the following:
\begin{equation}\label{Pre6}
    \displaystyle \big(  n[z] \, a[\lambda] \, A \big) (j) \; = \; 
    z + \lambda j.
\end{equation}

The Lie group $\SU(2)$ is diffeomorphic to the 3-sphere $\S^3$. Iwasawa decomposition implies that $\SL(2,\C)$ is diffeomorphic to $\H^3 \times \SU(2)\cong\H^3 \times\S^3 $.

\bigskip
Let $T: \SL(2,\C) \longrightarrow \SU(2)$ be the function that assigns the part in $\SU(2)$ of the Iwasawa decomposition. In other words, if $g = n[z] \, a[\lambda] \, K \in \SL(2,\C)$,
then $T(g) \; := \; K$.

\bigskip
One has the following theorem (\cite{Els98}, page $311$).
\begin{theo}\label{TeoGroups1}
Let $K_D$ be an imaginary quadratic field with discriminant $d_D<0$. Let $\mathcal{O}_D$ be its ring of integers.
Let us consider the groups: 
\[
   \displaystyle  \Gamma_D:= \SL(2,\mathcal{O}_D) \; = \; \Bigg \{ \left(  \begin{matrix}
a & b \\
c & d \\
\end{matrix}  \right) ; a,b,c,d \in\mathcal{O}_D, \; \; ad-bc=1    \Bigg \}
\]
and
\[
     \widetilde{\Gamma}_D:= \PSL( 2,\mathcal{O}_D) \; = \; \SL(2,\mathcal{O}_D) / \{\pm I\}.
\]

Thus $\widetilde{\Gamma}_D= \PSL(2,\mathcal{O}_D)$ consists of M\"obius transformations $z\mapsto\frac{az+b}{cz+d}$ with $a,b,c,d \in\mathcal{O}_D$.
Then the groups ${\Gamma}_D$ and $\widetilde{\Gamma}_D$ have the following properties:
\begin{enumerate}
  \item   ${\Gamma}_D$ and $\widetilde{\Gamma}_D$ is a discrete subgroup of  $\SL(2,\C)$ and 
  $\PSL(2,\C)$, respectively. They are both finitely presented.
  \item $\widetilde{\Gamma}_D$ is not co-compact but it has finite co-volume. 
  \item
        \[
            \text{Vol} \big(  \H^3 / \widetilde{\Gamma}_D   \big)   \; = \;  \frac{\abs{d_D}^{\frac{3}{2}}}{4 \pi^2} \, \zeta_{D} (2), \quad \text{where}  \,\,  \zeta_{D} \,\text{is the  Dedekind zeta function associated to}  \,K_D.
        \]
  \item $\widetilde{\Gamma}_D$ has a fundamental domain bounded by a finite number of totally geodesic surfaces 
  $\mathcal{F}_D$ (\ie it is geometrically finite).
\end{enumerate}
\end{theo}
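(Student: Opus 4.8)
The plan is to treat the four assertions in order, dispatching the discreteness and combinatorial facts quickly and reserving the volume formula, which carries the genuine arithmetic content, for last. Discreteness is immediate: $\mathcal{O}_D$ is a rank-two $\Z$-lattice in $\C$, hence a discrete subset, so any sequence in $\SL(2,\mathcal{O}_D)$ converging in $\SL(2,\C)$ is eventually constant because each matrix entry ranges over the discrete set $\mathcal{O}_D$; quotienting by $\{\pm I\}$ preserves this and gives the statement for $\widetilde{\Gamma}_D$. For finite presentation I would appeal to the fourth assertion: a fundamental domain bounded by finitely many totally geodesic faces feeds into the Poincar\'e polyhedron theorem, producing a finite presentation whose generators are the face-pairing maps and whose relations come from the edge cycles. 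Alternatively one invokes the Borel--Serre theorem that arithmetic groups are finitely presented.

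For the second assertion I would exhibit the parabolics explicitly: for each $\omega\in\mathcal{O}_D$ the unipotent $n[\omega]$ fixes $\infty$, so $\widetilde{\Gamma}_D$ contains a rank-two free abelian parabolic subgroup stabilizing the cusp at $\infty$. Since a compact hyperbolic orbifold admits no parabolic isometries, the presence of this cusp already forces $\H^3/\widetilde{\Gamma}_D$ to be non-compact. Finiteness of the co-volume then follows either from the Borel--Harish-Chandra theorem that arithmetic subgroups are lattices, or, more concretely, by reading it off the explicit finite-volume fundamental domain built below.

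For the fourth assertion I would construct a fundamental domain by the Ford, or isometric-sphere, method. Specializing (\ref{Pre3}) to the point $j$, the height of $g\cdot j$ equals $(\abs{c}^2+\abs{d}^2)^{-1}$, and the isometric sphere of a matrix with $c\neq 0$ is the Euclidean hemisphere of radius $\abs{c}^{-1}$ centered at $-d/c$. The Ford domain consists of the points of $\H^3$ lying above all isometric spheres and over a fundamental domain for the stabilizer of $\infty$ acting on $\C$. The finiteness assertion reduces to the fact that only finitely many isometric spheres rise above any fixed horoball at $\infty$, since there $\abs{c}$ is bounded and each admissible $c$ admits only finitely many relevant $d$ modulo $\mathcal{O}_D$; what remains is a region cut out by finitely many geodesic hemispheres and vertical planes, which is geometric finiteness.

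The third assertion is the main obstacle and is where I expect the real work to lie. I would derive Humbert's formula by the Tamagawa-measure method: the Tamagawa number of $\SL_2$ over $K_D$ equals $1$, so the Tamagawa measure of $\SL_2(K_D)\backslash\SL_2(\mathbb{A})$ is $1$. Factoring the adelic quotient into its archimedean part $\SL_2(\C)$ and the non-archimedean parts, and computing the local volumes $\mathrm{vol}\big(\SL_2(\mathcal{O}_v)\big)=1-N(v)^{-2}$, assembles the Euler product $\zeta_D(2)^{-1}$, while the self-dual Haar normalization on $K_D\otimes\R=\C$ contributes the discriminant factor $\abs{d_D}^{3/2}$; dividing the resulting $\SL_2(\C)$-volume by $\mathrm{vol}(\SU(2))$ to descend to $\H^3=\SL_2(\C)/\SU(2)$, and accounting for $\{\pm I\}$ in the passage from $\Gamma_D$ to $\widetilde{\Gamma}_D$, yields $\mathrm{Vol}(\H^3/\widetilde{\Gamma}_D)=\abs{d_D}^{3/2}\zeta_D(2)/(4\pi^2)$. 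A more hands-on alternative, close to Humbert's original argument, integrates the hyperbolic volume form over the Ford domain above and identifies the emerging lattice sum $\sum_{c,d}(\abs{c}^2+\abs{d}^2)^{-s}$ with an Epstein-type zeta function factoring through $\zeta_D(s)$, whose special value at $s=2$ produces the constant. In either route the delicate bookkeeping lies in matching the measure normalizations and handling the $\pm I$ and unit contributions, which is exactly what pins down the numerical factor $4\pi^2$.
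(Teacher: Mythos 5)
The paper does not prove this theorem: it is quoted as a known result, with the citation to \cite{Els98} (page 311) given immediately before the statement, so there is no internal argument to compare yours against. Your blind reconstruction follows the standard route and is correct in outline: discreteness of $\mathcal{O}_D\subset\C$ gives discreteness of $\Gamma_D$; the translations $n[\omega]$, $\omega\in\mathcal{O}_D$, are parabolic, so the quotient cannot be compact; the Ford domain bounded by the isometric spheres of radius $1/\abs{c}$ centered at $-d/c$ has, modulo the action of the stabilizer of $\infty$, only finitely many faces, which yields geometric finiteness and (via Poincar\'e's polyhedron theorem, or Borel--Serre) finite presentation; and Humbert's formula follows either from the Tamagawa-number computation or from the classical integration over that fundamental domain. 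The one genuinely delicate point is the one you flag and defer: in the Tamagawa route the constant emerges only after matching normalizations --- the self-dual measure on $\C$ producing $\abs{d_D}^{3/2}$, the local volumes $1-N(v)^{-2}$ assembling $\zeta_D(2)^{-1}$, and the descent to $\H^3$, where one must remember that $-I$ acts trivially, so that $\H^3/\Gamma_D=\H^3/\widetilde{\Gamma}_D$ carries no extra factor of $2$ and the factor appears instead as $\mathrm{vol}\big(\SU(2)\big)=2\pi^2$ versus $\mathrm{vol}\big(\SU(2)/\{\pm I\}\big)=\pi^2$; mishandling this is the classic way to be off by a power of $2$ in the $4\pi^2$. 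Since the paper itself treats the whole statement as citable background, your outline is at an appropriate level of detail; completing the volume bookkeeping would amount to reproducing the treatment in \cite{Els98} or in Maclachlan--Reid.
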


The previous theorem implies that for each square-free positive integer $D$:
\[
     {M}_D \; = \;  \H^3 /  \widetilde{\Gamma}_D \; = \;  \H^3 /  \Gamma_D,
\]
is a non-compact  hyperbolic 3-orbifold of finite volume. 
$M_D$ is called \emph{Bianchi orbifold corresponding to $K_D$}, \cite{Bianchi}.

\bigskip
Let  $\infty  \in  \H^3 \cup \mathbb{P}^1 (\C)$ and $\Gamma_{D, \infty}$ the stabilizer of  $\infty$ in $\Gamma_D$. In other words
\[
     \Gamma_{D, \infty} =  \Bigg\{
    \left(
\begin{matrix}
\epsilon & z \\
0 & \epsilon^{-1} \\
\end{matrix}  \right) \;  ; \; \epsilon  \in \mathcal{O}^{\times}_D , \; z \in \mathcal{O}_D  \Bigg\},
\]
where $\mathcal{O}^{\times}_D$ denotes unit group of units of $\mathcal{O}_D$.

\bigskip
The maximal unipotent subgroup  $ \Gamma'_{D, \infty}$ of $\Gamma_D$ at $\infty$ is given by:
   \[
     \Gamma'_{D, \infty} =  \Bigg\{
    \left(
\begin{matrix}
1 & z \\
0 & 1 \\
\end{matrix}  \right) \;  ; \; z \in \mathcal{O}_D  \Bigg\}.
\]

%\bigskip
Since $D>0$,  $ \Gamma_{D, \infty}$ contains a free abelian group of rank 2, which implies that $\infty$ defines a cusp of $\Gamma_{D}$. It is a classical result that was proved by Bianchi \cite{Bianchi} and Hurwitz that the number of cusps of $\Gamma_D$ is equal to the class number of $K_D$. As $D> 0$, the Baker--Heegner--Stark theorem 
(see \cite{Stark1, Stark2}) says that the class number of $K_D$ is equal to $1$ if and only if
$D \in  \{\,1,2,3,7,11,19,43,67,163\,\}$. Therefore, for those $D$,  $M_D= \H^3 /  \Gamma_D$  are hyperbolic 3-orbifolds with a unique cusp in $\infty$.

\subsection*{Notation}

Let  $D \in \N$ be square-free integer. Let $K_D= \Q (\sqrt{-D})$ be the corresponding imaginary quadratic field. We will denote by $\mathcal{O}_D$  its ring of integers and by
 $\mathcal{O}^{\times}_D $ its group of units. In order not to overload the notation we will sometimes refer to $\Gamma_D$ simply as $\Gamma$, $\Gamma_{D, \infty}$ as $\Gamma_{\infty}$ and $\Gamma'_{D, \infty}$ as $\Gamma'_{\infty}$. %  and to $M_D$ as $M$.

% UUUUUUUUUUUUUUUUUUUUUUUUUUUUUUUUUUUUUUUUUUUUUUUUUUUUUUUUUUUUUUUUUUUUUUUUUUUUUUUUUUUUUUUUUUUUUUUU
\section{Spherical harmonics and the Wigner matrix}
Consider the Laplacian on the circle $\S^1$,   $\triangle_{\S^1}= - \frac{\partial^2}{\partial \theta^2}$. Let $n \in \Z$ and $\psi_n : \S^1 \longrightarrow \C$ its eigenfunctions \emph{i.e.,} $\psi_n (z) = z^{n}$, with $\abs{z}=1$. One has,
\[
       \triangle_{\S^1} \psi_n \; = \; n^2 \psi_n,
\]
hence, $\text{Spec} (\triangle_{\S^1}) = \{ n^2 \, ; \, n \in \Z   \}$. Therefore, it has multiplicity $2$ for $n \neq 0$. Let $R_\alpha \in \SO(2)$ a rotation of angle $\alpha \in [0,2\pi)$ and consider the function $\psi_n \circ R_\alpha : \S^1 \longrightarrow \C$. Hence $\psi_n \circ R_\alpha (z) \, = \, e^{in\alpha} \, \psi_n (z)$. For each $n \in \Z$ there exists a  representation $\Psi_n : \SO(2) \longrightarrow M_{1 \times 1} (\C) \cong \C$ given by $  \Psi_n ( R_\alpha ) := e^{in \alpha}$. Therefore,
\begin{equation}\label{LDep1}
  \psi_n \circ R_\alpha \; = \;   \Psi_n ( R_\alpha ) \cdot \psi_n.
\end{equation}
In other words,  $\psi_n \circ R_\alpha$ and $\psi_n$ are linearly dependent eigenfunctions in $\S^1$.  The coefficients that appears in the linear combination of such functions come from the representations of $\SO(2)$.

\bigskip
Let us consider now the Laplacian on $\S^2$, denoted $\triangle_{\S^2}$. Let $l \in \N$, $m \in \Z$ with $ m \in [-l,l]$. The spherical harmonics $Y_m^l : \S^2 \longrightarrow \C$ are the eigenfunctions of $\triangle_{\S^2}$, \emph{i.e.,}
\[
    \displaystyle  \triangle_{ \S^2 } Y_m^l  \; = \; l (l + 1) \, Y_m^l,
\]
therefore, $\text{Spec} (\triangle_{\S^2}) = \{ l(l+1)  \, ; \, l \in \N \cup  \{0\} \}$. Now the multiplicity grows with $l$. Let $R \in \SO(3)$, and $Y_m^l \circ R : \S^2 \longrightarrow \C$. The functions  $\{ Y_m^l \circ R, Y_k^l \}_{k=-l}^l$ are linearly dependent eigenfunctions of $\S^2$. We can write $R$ in terms of its Euler angles:
\[
      R \; = \; ROT(\theta, \chi, \phi) \; : = \;
      \left(  \begin{matrix}
 \cos \theta   & \sin \theta  & 0  \\
-\sin \theta   & \cos \theta  & 0   \\
0 & 0 &   1   \\
\end{matrix}  \right)
 \left(  \begin{matrix}
 \cos \chi   & 0   & - \sin \chi  \\
0    & 1  & 0   \\
 \sin \chi  & 0 &   \cos \chi   \\
\end{matrix}  \right)
  \left(  \begin{matrix}
 \cos \phi   & \sin \phi  & 0  \\
-\sin \phi   & \cos \phi  & 0   \\
0 & 0 &   1   \\
\end{matrix}  \right),
\]
with  $\theta \in [0,2\pi), \chi \in [0,\pi], \phi \in [-2\pi,2\pi)$. There is a formula analogous to (\ref{LDep1}):
\begin{equation}\label{SumatoriaWigner}
       Y^l_{m} \Big( R^{-1} (\vartheta, \varphi) \Big) \; = \;     \sum_{k=-l}^{l} D^l_{km} (R) \cdot  Y^l_{k} (\vartheta, \varphi).
\end{equation}
The pair $(\vartheta, \varphi)$ with  $\vartheta \in [0,\pi]$, $\varphi \in [0,2\pi)$, gives the spherical coordinates of the point $p=(x,y,z)\in\R^3$ given by
\begin{align*}%\label{CoorEsf1}
  x \; = \; &   \cos \varphi \cdot   \sin \vartheta, \\
  y \; = \; &   \sin \varphi \cdot \sin \vartheta, \\
  z \; = \; &   \cos \vartheta,
\end{align*}
in addition one has,
\begin{equation}
\label{WignerIg}
     \displaystyle  D_{km}^l \big( R  \big) \; = \; D_{km}^l \big( ROT(\theta, \chi, \phi)  \big) = e^{i k \theta } \, d_{km}^l (\chi) \,  e^{i m \phi},
\end{equation}
where $d_{km}^l (\chi)$ is the \emph{Wigner small $d$-matrix}. 

\bigskip
We refer to the book of Wigner \cite{Wigner}, to see how the coefficients $D_{km}^l \big( R  \big)$ are deduced. These coefficients come from $(2l+1)$-dimensional representations  of $\SO(3)$ (or its universal cover $\SU(2)$). This is in complete analogy with the case of the circle
 $U(1)=\S^1$.
%\newpage

% ==============

\section{Eisenstein series}
\subsection{Preliminaries}
Let $P=z + \lambda j \in  \H^3$, we will define a matrix $R(d\sigma,P) \in M_{3 \times 3 } (\R )$, where $\sigma \in \SL(2,\C)$. This matrix is completely determined by the value of the differential of $\sigma$ en $P$ on a Euclidean orthonormal basis $\{e_1,e_2,e_3\} $ of $T_P \R^3$ \emph{i.e.,} it is determined by the orthogonal basis  $\{d \sigma_P \, e_1,  d \sigma_P \, e_2, d \sigma_P \, e_3\}$. More explicitly      
\begin{equation}\label{Rota9}
   \displaystyle R(d\sigma,P) \, v  \; := \;
     \frac{\lambda}{\Im \, \sigma P} \; d \sigma_P (v), \; \; \;  \forall v \in T_P \R^3.
\end{equation}

The lineal transformation $R(d\sigma,P): \R^3 \longrightarrow \R^3$  preserves Euclidean norm.

\bigskip
In  dimension 2, if  $\sigma \in \SL(2,\R)$  and $z \in \H^2$ then $d\sigma_{z}$ acts on
$\R^2$ as the composition of a rotation in $\SO(2)$ and a homothety. The rotation $R(d\sigma,P) \in \SO(3)$  is the analogous in dimension $3$.

\begin{lemm}\label{Series5}
Let $\sigma= \left(  \begin{matrix}
a & b \\
c & d \\
\end{matrix}  \right) \in \SL(2, \C)$, $P=z+\lambda j \in \H^3$. The following identity holds:
\begin{equation}
      \displaystyle R(d\sigma, P)  =
 \Big(  \begin{matrix}
( d\sigma_P \, e_1) ^{ \, T} &  (d\sigma_P \, e_2)^{ \, T} &  
( d\sigma_P \,  e_3)^{ \, T} \\
\end{matrix}  \Big), %\; \; \; k=1,2,3,
\end{equation}
where $\Big(  \begin{matrix}
( d\sigma_P \, e_1) ^{ \, T} & ( d\sigma_P \, e_2) ^{ \, T} &  ( d\sigma_P \, e_3) ^{ \, T} \\
\end{matrix}  \Big)$
denotes the matrix whose columns are given by:
\begin{align}
 d\sigma_P \, e_1
             & \; = \;   \frac{1}{2 \Omega}
\Big[ (\overline{z}^2\overline{c}^2+z^2c^2)+2(\overline{zcd}+zcd)+(\overline{d}^2+d^2) -\lambda^2(\overline{c}^2+c^2) \Big] \,  e_1             &&  \nonumber \\
   & \; - \;   \frac{i}{2  \Omega}
\Big[ (\overline{z}^2\overline{c}^2-z^2c^2)+2(\overline{zcd}-zcd)+(\overline{d}^2-d^2) -\lambda^2(\overline{c}^2-c^2) \Big] \,  e_2 
     && \nonumber \\
             & \; - \;  \frac{\lambda}{ \Omega } \Big[ \abs{c}^2 (\overline{z}+z)  + (c\overline{d}+\overline{c}d) \Big] \, e_3,
     &&  \label{dSigmae1}
\end{align}
\begin{align}
  d\sigma_P \, e_2
             & \; = \;   \frac{i}{2  \Omega}
\Big[ (\overline{z}^2\overline{c}^2-z^2c^2)+2(\overline{zcd}-zcd)+(\overline{d}^2-d^2) +\lambda^2(\overline{c}^2-c^2) \Big] \,  e_1 
     &&  \nonumber \\
             & \; + \;   \frac{1}{2 \Omega}
\Big[ (\overline{z}^2\overline{c}^2+z^2c^2)+2(\overline{zcd}+zcd)+(\overline{d}^2+d^2) +\lambda^2(\overline{c}^2+c^2) \Big] \, e_2 
     &&  \nonumber \\
      &\; - \;  \frac{\lambda i}{  \Omega } \Big[ \abs{c}^2 (\overline{z}-z)  + (c\overline{d}-\overline{c}d) \Big] \, e_3,                       && \label{dSigmae2}
\end{align}
\begin{align}
 d\sigma_P \, e_3
             & \; = \;   \frac{\lambda}{ \Omega }
 \Big[ (\overline{z}\overline{c}^2+zc^2)+ (\overline{cd}+cd)  \Big] \,  e_1   \; - \;    \frac{\lambda i}{ \Omega } \Big[ (\overline{z}\overline{c}^2-zc^2)+ (\overline{cd}-cd)  \Big] \,  e_2                   &&  \nonumber \\
             & \; + \;  \frac{1}{ \Omega } \Big[ \abs{c}^2 (\abs{z}^2-\lambda^2) + \abs{d}^2 + (\overline{zc}d+ zc\overline{d}) \Big] \,  e_3,   &&  \label{dSigmae3}
\end{align}
where    $\Omega := \abs{ cz + d }^2 + \abs{c}^2 \lambda^2$. In addition,  $\{e_1,e_2,e_3 \} $ is a Euclidean orthonormal basis of $T_P \R^3$.
\end{lemm}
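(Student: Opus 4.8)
The plan is to exploit a single observation that collapses the apparent complexity: the prefactor in \eqref{Rota9} is exactly the squared denominator of the action. Writing $\sigma P=u+v\,j$ with $u\in\C$ the complex part and $v=\Im\,\sigma P>0$ the height, formula \eqref{Pre3} gives $v=\lambda/\Omega$ with $\Omega=\abs{cz+d}^2+\abs{c}^2\lambda^2$, so that $\lambda/\Im\,\sigma P=\Omega$. Hence, identifying $T_P\R^3=\R^3$ with the real span of $1,i,j$ so that $e_1,e_2,e_3$ correspond to the directions $1,i,j$, the three columns of $R(d\sigma,P)$ are
\[
R(d\sigma,P)\,e_k \;=\; \Omega\,\frac{\partial}{\partial t_k}\big(\sigma P\big),\qquad (t_1,t_2,t_3)=(x,y,\lambda),
\]
where $\partial_{t_k}(\sigma P)$ is read off in $\{e_1,e_2,e_3\}$ as $(\Re\,\partial_{t_k}u,\ \Im\,\partial_{t_k}u,\ \partial_{t_k}v)$. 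Since $u$ and $v$ are ratios with denominator $\Omega$, the quotient rule produces $\Omega^2$ in every denominator, and the prefactor $\Omega$ leaves a single power of $\Omega$, exactly as in \eqref{dSigmae1}--\eqref{dSigmae3}. The whole statement is thus reduced to differentiating $u,v$ and multiplying by $\Omega$.

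The cleanest way to organize this, which I would adopt, is to pass to Hamilton quaternions. Regarding $P=z+\lambda j$ as a quaternion, the action \eqref{Pre3} is the quaternionic M\"obius map $\sigma P=(aP+b)(cP+d)^{-1}$, and $\Omega=\abs{cP+d}^2$ is its quaternionic norm. Differentiating with the product rule and $d\big(q^{-1}\big)(w)=-q^{-1}(dq)q^{-1}$, and factoring $v(cP+d)^{-1}$ on the right, gives
\[
d\sigma_P(v)\;=\;\big[a-(\sigma P)\,c\big]\,v\,(cP+d)^{-1},\qquad v\in\{1,i,j\}.
\]
Because $\Omega$ is real and central and $(cP+d)^{-1}=\overline{(cP+d)}/\Omega$, multiplication by $\Omega$ turns the inverse into a conjugate:
\[
R(d\sigma,P)\,v\;=\;\big[a-(\sigma P)c\big]\,v\,\overline{(cP+d)}\;=\;\tfrac1\Omega\,L\,\overline{(cP+d)}\,v\,\overline{(cP+d)},
\]
where $L:=a(cP+d)-(aP+b)c=1+\lambda\,(c-\bar c)\,a\,j$ after using $ad-bc=1$ and $jc=\bar c\,j$. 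The three columns are then obtained by setting $v=1,i,j$ and expanding the quaternion products into the basis $1,i,j$; the vanishing of the resulting $k$-component reconfirms that $d\sigma_P$ maps $T_P\H^3$ into $T_{\sigma P}\H^3$, and the resulting real and imaginary parts are precisely \eqref{dSigmae1}--\eqref{dSigmae3}.

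If one prefers to avoid quaternions, the same formulas follow directly: expand $u=N/\Omega$ with $N=(az+b)(\bar c\bar z+\bar d)+a\bar c\lambda^2$ and $v=\lambda/\Omega$, record $\partial_x\Omega,\partial_y\Omega,\partial_\lambda\Omega$ together with the derivatives of $N$, apply the quotient rule, and multiply by $\Omega$. I expect the only genuine difficulty to be bookkeeping rather than ideas: one must split each $\partial_{t_k}u$ into real and imaginary parts correctly and track the factors of $i$ and the sign of the $\lambda^2(\bar c^2\pm c^2)$ terms, which flip between the $e_1$- and $e_2$-columns precisely because differentiating in the $x$- versus $y$-direction couples to the $\Re/\Im$ split differently. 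The quaternion identity above makes these signs automatic and is what controls this bookkeeping. A useful consistency check on the algebra is that the three columns form a positively oriented orthonormal frame, i.e. $R(d\sigma,P)\in\SO(3)$, in agreement with the remark following \eqref{Rota9}.
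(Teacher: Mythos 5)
Your reduction is set up correctly — $\lambda/\Im\,\sigma P=\Omega$, hence $R(d\sigma,P)v=\Omega\,d\sigma_P(v)$ — and so is your product-rule formula $d\sigma_P(v)=[a-(\sigma P)c]\,v\,(cP+d)^{-1}$. The gap is the next step: the identity $a-(\sigma P)c=\tfrac1\Omega L\,\overline{(cP+d)}$ with $L:=a(cP+d)-(aP+b)c$ is false. Unwinding it, $L(cP+d)^{-1}=a-(aP+b)\,c\,(cP+d)^{-1}$, whereas $a-(\sigma P)c=a-(aP+b)(cP+d)^{-1}c$; so your identity silently commutes the complex scalar $c$ past the quaternion $(cP+d)^{-1}$, which fails for $c\notin\R$ precisely because of the relation $jc=\bar c\,j$ that you yourself invoke (with $w:=cz+d$ one has $(cP+d)c=cw+\abs{c}^2\lambda j$ but $c(cP+d)=cw+c^2\lambda j$). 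A concrete counterexample: take $\sigma=\left(\begin{matrix}1&0\\ i&1\end{matrix}\right)$ and $P=j$, so $\Omega=2$. Then $a-(\sigma P)c=\tfrac12(1+k)$, while $\tfrac1\Omega L\,\overline{(cP+d)}=\tfrac12(1+2k)(1-k)=\tfrac12(3+k)$. Worse, your final sandwich formula then gives $R(d\sigma,P)e_1=2-k$ and $R(d\sigma,P)e_3=j-2i$: the first has a nonzero $k$-component and the second is not a unit vector, so the expansion cannot possibly reproduce \eqref{dSigmae1}--\eqref{dSigmae3} (the true values, which do match the lemma, are $e_1$ and $-e_2$). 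Your own proposed consistency check ($R(d\sigma,P)\in\SO(3)$) would have detected this.

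The repair is small, and with it your approach becomes a genuine proof — indeed a cleaner one than the paper's, whose entire proof is the phrase ``Direct calculations.'' The correct replacement for your $L$-identity is
\[
a-(\sigma P)c\;=\;\frac{\bar c\,\bar P+\bar d}{\Omega},\qquad \bar P:=\bar z-\lambda j,
\]
which one verifies by splitting into complex and $j$-parts and using $ad-bc=1$; alternatively, for $c\neq 0$, write $\sigma P=ac^{-1}-c^{-1}(cP+d)^{-1}$, differentiate to get $d\sigma_P(v)=c^{-1}(cP+d)^{-1}c\,v\,(cP+d)^{-1}$, and use $c^{-1}\overline{(cP+d)}\,c=\bar c\bar P+\bar d$. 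Either way,
\[
R(d\sigma,P)\,v\;=\;\frac{1}{\Omega}\,\big(\bar c\bar P+\bar d\big)\;v\;\overline{(cP+d)},
\]
and expanding this for $v=1,i,j$ does yield the stated columns; for instance $v=j$ gives $\tfrac1\Omega\big(2\lambda\,\bar c\bar w+(\abs{w}^2-\abs{c}^2\lambda^2)\,j\big)$, which is exactly \eqref{dSigmae3} after the simplifications $\bar c\bar w+cw=2\Re(cw)$ and $\abs{c}^2\abs{z}^2+\abs{d}^2+\bar z\bar c d+zc\bar d=\abs{w}^2$. Your real-variable fallback (quotient rule on $u=N/\Omega$, $v=\lambda/\Omega$, then multiply by $\Omega$) is essentially the computation the paper leaves to the reader, but as written it is a sketch, not a proof; the quaternionic route, once corrected as above, is the argument worth keeping.
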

\begin{proof}
Direct calculations.
\end{proof}

\bigskip
One has the double covering epimorphism  $\Phi : \SU(2) \longrightarrow \SO(3)$ denoted by  $ \Phi ( A ) = \Phi_A$, with $\text{kernel}\,\Phi=\left\{I,-I \right\}$. This is of course the spin cover of $\SO(3)$. Explicitly, if $ A = \left(  \begin{matrix}
\alpha & \beta \\
-\overline{\beta} & \overline{\alpha} \\
\end{matrix}  \right) \in \SU(2)$, hence
\begin{equation}\label{AuxM1}
   \Phi_A = \left(  \; \begin{matrix}
  \Re \, (\alpha^2 - \beta^2)                & - \Im \, (\alpha^2 + \beta^2)          & 2 \,  \Re \, ( \alpha \beta ) \\
  \Im \, ( \alpha^2 -  \beta^2 )             & \Re \, ( \alpha^2 +  \beta^2 )             &  2 \, \Im \, (  \alpha \beta  ) \\
 - 2 \, \Re \, (  \overline{\alpha} \beta )  & 2 \, \Im \, (  \alpha \overline{\beta }) &   \abs{\alpha}^2 - \abs{\beta}^2  \\
\end{matrix} \;  \right).
\end{equation}

% ==============================================================
\begin{lemm}\label{Series9}
Let $\sigma=n[z']a[\lambda']  K = \left(  \begin{matrix}
a & b \\
c & d \\
\end{matrix}  \right) \in \SL(2, \C)$,  $g=n[z] a[\lambda]$ and $P=z + \lambda j \in \H^3$. Then the following identity holds:
\begin{equation*}%\label{Ser5}
      \displaystyle  \Phi_{ T(\sigma  g )}  \; = \;  B \circ R(d\sigma, P) \circ B
\end{equation*}
where
\[
B=\left(  \begin{matrix}
1 & 0 & 0 \\
0 & 1 & 0 \\
0 & 0 & -1 \\
\end{matrix}  \right).
\]
\end{lemm}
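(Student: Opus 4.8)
The plan is to interpret both sides of the claimed identity as orthogonal transformations of the tangent space $T_j\H^3\cong\R^3$ and to prove equality by a chain-rule argument, thereby avoiding a head-on comparison of the explicit formulas of Lemma \ref{Series5} with (\ref{AuxM1}). Writing the Iwasawa decomposition of $\sigma g$ as $\sigma g = n[z'']\,a[\lambda'']\,A$ with $A:=T(\sigma g)\in\SU(2)$, I will establish two separate identifications of linear maps on $\R^3$: first that $R(d\sigma,P)=dA_j$, and second that $dA_j=B\,\Phi_A\,B$. Since $B^2=I$, conjugating the resulting equality $R(d\sigma,P)=B\,\Phi_{T(\sigma g)}\,B$ by $B$ gives exactly $\Phi_{T(\sigma g)}=B\circ R(d\sigma,P)\circ B$.

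For the first identification I use that $g=n[z]\,a[\lambda]$ is a Euclidean similarity of $\R^3$ (an isotropic homothety of ratio $\lambda$ followed by a horizontal translation), so by (\ref{Pre6}) it sends $j$ to $P=z+\lambda j$ and its differential is $dg_j=\lambda\cdot\mathrm{Id}$. Likewise $n[z'']a[\lambda'']$ is a similarity of ratio $\lambda''$, so its differential is $\lambda''\cdot\mathrm{Id}$, while $A$ fixes $j$. Applying the chain rule to $\sigma g=(n[z'']a[\lambda''])\,A$ at $j$ on one hand, and to $\sigma g=\sigma\circ g$ at $j$ on the other, yields $\lambda''\,dA_j = d(\sigma g)_j = d\sigma_P\circ dg_j=\lambda\,d\sigma_P$. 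Since $\lambda''=\Im\big((\sigma g)(j)\big)=\Im(\sigma P)$, this reads $dA_j=\tfrac{\lambda}{\Im\sigma P}\,d\sigma_P$, which is precisely the definition (\ref{Rota9}) of $R(d\sigma,P)$. Hence $R(d\sigma,P)=dA_j$.

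For the second identification, both $A\mapsto dA_j$ and $A\mapsto B\,\Phi_A\,B$ are continuous homomorphisms $\SU(2)\to\SO(3)$ with kernel $\{\pm I\}$ (for the left-hand map this is the isotropy representation of the stabilizer $\SU(2)$ of $j$ on $T_j\H^3$, which at $\lambda=1$ carries the hyperbolic metric equal to the Euclidean one). It therefore suffices to check their agreement on the maximal torus together with one further element, which together topologically generate $\SU(2)$. I compute $dA_j$ directly from the action formula (\ref{Pre3}): the diagonal torus $\mathrm{diag}(e^{i\phi},e^{-i\phi})$ rotates the $(x,y)$-plane by $2\phi$ and fixes the $\lambda$-axis, matching $\Phi$ on that block, while an off-diagonal generator such as $A=\tfrac{1}{\sqrt2}\big(\begin{smallmatrix}1&i\\ i&1\end{smallmatrix}\big)$ linearizes at $j$ to a rotation coupling the $y$- and $\lambda$-directions with the sign opposite to that in (\ref{AuxM1}). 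This is exactly the discrepancy recorded by the reflection $B=\mathrm{diag}(1,1,-1)$, and it pins down the conjugation.

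The main obstacle is this second step: the homomorphism-and-kernel argument alone only determines $dA_j$ up to an orthogonal conjugation of $\Phi_A$, so the precise appearance of $B$ (rather than the identity or some other reflection) must be extracted from an honest linearization of (\ref{Pre3}) on concrete elements. Conceptually, $B$ encodes the asymmetric role of the quaternionic/vertical $\lambda$-direction in the upper half-space model: the spin representation $\Phi$ of (\ref{AuxM1}) and the geometric isotropy action on $T_j\H^3$ agree on the horizontal plane but differ by orientation in the vertical direction. A secondary point requiring care is the bookkeeping of base points when applying the chain rule in the first step, since $d\sigma_P$ is evaluated at $P$ while $dg_j$ and $dA_j$ are evaluated at $j$.
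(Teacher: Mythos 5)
Your proposal is correct, but it takes a genuinely different route from the paper's. The paper proceeds by direct computation: it works out the Iwasawa decomposition of $\sigma g$ explicitly, writes $T(\sigma g)=\left(\begin{smallmatrix}\mu & \eta\\ -\bar{\eta} & \bar{\mu}\end{smallmatrix}\right)$ in terms of $c,d,z,\lambda$, evaluates the entries of $\Phi_{T(\sigma g)}$ from (\ref{AuxM1}), and matches them column by column against the explicit formulas for $d\sigma_P\,e_i$ in Lemma \ref{Series5}. You instead factor the statement into two clean pieces: the chain-rule identity $R(d\sigma,P)=d\bigl(T(\sigma g)\bigr)_j$, using that $n[z'']a[\lambda'']$ is a Euclidean similarity of ratio $\lambda''=\Im\,\sigma P$ and that $T(\sigma g)$ fixes $j$; and the identity $dA_j=B\,\Phi_A\,B$ for all $A\in\SU(2)$, which you reduce --- via the fact that the agreement locus of two continuous homomorphisms is a closed subgroup, and that the diagonal torus together with one element outside its normalizer topologically generates $\SU(2)$ --- to two short linearizations of (\ref{Pre3}). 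Both linearizations come out as you claim: for the torus one has agreement outright, and for $A=\tfrac{1}{\sqrt{2}}\left(\begin{smallmatrix}1&i\\ i&1\end{smallmatrix}\right)$ one computes $dA_j(e_2)=e_3$, $dA_j(e_3)=-e_2$, whereas $\Phi_A(e_2)=-e_3$, $\Phi_A(e_3)=e_2$, which is exactly the discrepancy absorbed by conjugation with $B$; so the sketched step is an honest (and short) verification, not a gap. What your approach buys: it bypasses Lemma \ref{Series5} entirely, replaces the long entry-by-entry comparison with two small computations, and explains conceptually why $B$ appears (the spin-cover convention (\ref{AuxM1}) and the geometric isotropy action on $T_j\H^3$ agree on the horizontal plane but have opposite sign in the vertical direction). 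What the paper's approach buys: it is elementary and self-contained, requiring no structure theory of closed subgroups of $\SU(2)$, and it records the explicit matrix identities along the way.
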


\begin{proof}
One has
\begin{align}
\sigma    & \; = \;  \left(  \begin{matrix}
1 & z' \\
0 & 1  \\
\end{matrix}  \right)
\left(  \begin{matrix}
\sqrt{\lambda'} & 0 \\
0 & \frac{1}{\sqrt{\lambda'}}  \\
\end{matrix}  \right)
\left(  \begin{matrix}
\alpha & \beta \\
-\overline{\beta}  & \overline{\alpha}  \\
\end{matrix}  \right)
             \; = \;
\displaystyle \left(  \begin{matrix}
\sqrt{\lambda'} \, \alpha - \frac{z'}{\sqrt{\lambda'}} \, \overline{\beta}  & \sqrt{\lambda'} \, \beta + \frac{z'}{\sqrt{\lambda'}} \, \overline{\alpha} \\
- \frac{1}{\sqrt{\lambda'}} \, \overline{\beta}  &  \frac{1}{\sqrt{\lambda'}} \, \overline{\alpha}  \\
\end{matrix}  \right).           &&  \label{Ser6}.
\end{align}
In addition,
\begin{equation}\label{Ser7}
  g =
\left(  \begin{matrix}
1 & z \\
0 & 1  \\
\end{matrix}  \right)
\left(  \begin{matrix}
\sqrt{\lambda} & 0 \\
0 & \frac{1}{\sqrt{\lambda}}  \\
\end{matrix}  \right) =
\left(  \begin{matrix}
\sqrt{\lambda} & \frac{z}{\sqrt{\lambda}} \\
0 & \frac{1}{\sqrt{\lambda}}  \\
\end{matrix}  \right).
\end{equation}

From (\ref{Ser6}) and (\ref{Ser7}) it follows that:
\begin{align}
\sigma g   & \; = \;
\displaystyle \left(  \begin{matrix}
\sqrt{\lambda \lambda'} \, \alpha - \frac{\sqrt{\lambda}}{\sqrt{\lambda'}} \, z' \overline{\beta}
&    \frac{\sqrt{\lambda'}}{\sqrt{\lambda}} \, z \alpha -  \frac{zz'}{\sqrt{\lambda \lambda'}} \, \overline{\beta}
     +  \frac{\sqrt{\lambda'}}{\sqrt{\lambda}} \, \beta  + \frac{z'}{\sqrt{\lambda \lambda'}}  \, \overline{\alpha}   \\
- \frac{\sqrt{\lambda}}{\sqrt{\lambda'}} \, \overline{\beta}
&  -\frac{z}{\sqrt{\lambda \lambda'}} \, \overline{\beta} +  \frac{1}{\sqrt{\lambda \lambda'}} \, \overline{\alpha}  \\
\end{matrix}  \right) \; := \;
\left(  \begin{matrix}
a' & b' \\
c' & d' \\
\end{matrix}  \right).        &&  \label{Ser8}
\end{align}

\bigskip
We want to find the matrix $T(\sigma g)$ (\emph{i.e.,} the $\SU(2)$ part of the Iwasawa decomposition of $\sigma g$). First let us remark that $$\abs{c'}^2 + \abs{d'}^2 = \tfrac{ \lambda^2 \, \left\vert \beta  \right\vert^2 + \abs{z}^2 \, \abs{\beta}^2 + \abs{\alpha}^2 - \big( z \alpha \overline{\beta} + \overline{z} \overline{\alpha} \beta \big) } {\lambda \lambda'}.$$
%\begin{align}
%\displaystyle \abs{c'}^2 + \abs{d'}^2  & \; = \;  \left\vert -\tfrac{\sqrt{\lambda}}{\sqrt{\lambda'}} \,  \overline{\beta}  \right\vert^2   +
%         \left\vert  -\tfrac{z}{\sqrt{\lambda \lambda'}} \, \overline{\beta} +  \tfrac{1}{\sqrt{\lambda \lambda'}} \, \overline{\alpha} %\right\vert^2
%    &&  \nonumber \\
%             & \; = \;   \tfrac{\lambda}{\lambda'} \, \left\vert \beta  \right\vert^2   +
%         \Big(   -\tfrac{z}{\sqrt{\lambda \lambda'}} \, \overline{\beta} +  \tfrac{1}{\sqrt{\lambda \lambda'}} \, \overline{\alpha} \Big)
%         \Big(   -\tfrac{\overline{z}}{\sqrt{\lambda \lambda'}} \, \beta +  \tfrac{1}{\sqrt{\lambda \lambda'}} \, \alpha \Big)
%    &&  \nonumber \\
%             & \; = \;    \tfrac{\lambda}{\lambda'} \, \left\vert \beta  \right\vert^2   + \tfrac{\abs{z}^2 }{\lambda \lambda'} \, \abs{\beta}^2
%               -   \tfrac{z }{\lambda \lambda'} \, \alpha \overline{\beta} - \tfrac{\overline{z} }{\lambda \lambda'} \, \overline{\alpha} \beta
%               +  \tfrac{1}{\lambda \lambda'} \, \abs{\alpha}^2
%    &&  \nonumber \\
%             & \; = \; \tfrac{ \lambda^2 \, \left\vert \beta  \right\vert^2 + \abs{z}^2 \, \abs{\beta}^2 + \abs{\alpha}^2
%              - \big( z \alpha \overline{\beta} + \overline{z} \overline{\alpha} \beta \big) } {\lambda \lambda'}.       &&  \label{Ser10}
%\end{align}
Let $r:= \lambda^2 \, \left\vert \beta  \right\vert^2 + \abs{z}^2 \, \abs{\beta}^2 + \abs{\alpha}^2
              - \big( z \alpha \overline{\beta} + \overline{z} \overline{\alpha} \beta \big)$, therefore
\begin{equation}\label{Ser11}
  \sqrt{\abs{c'}^2 + \abs{d'}^2 } = \frac{\sqrt{r}}{\sqrt{\lambda \lambda'}}.
\end{equation}
From (\ref{Ser8}) and (\ref{Ser11}) we conclude:
\[
    \frac{c'}{\sqrt{\abs{c'}^2 + \abs{d'}^2 }} =  - \frac{\lambda \overline{\beta}}{\sqrt{r}} \; \hspace{3cm} \;  \frac{d'}{\sqrt{\abs{c'}^2 + \abs{d'}^2 }} =  \frac{-z \overline{\beta}+ \overline{\alpha}}{\sqrt{r}}.
\]

\bigskip
Using Iwasawa  decomposition  (lemma (\ref{DIwasawa})) we obtain:
\begin{equation}\label{Ser13}
  T( \sigma g ) =
 \left(  \begin{matrix}
\tfrac{-\overline{z}\beta+\alpha }{\sqrt{r}} & \tfrac{ \lambda \beta }{\sqrt{r}} \\
\tfrac{ -\lambda \bar{\beta} }{\sqrt{r}} & \tfrac{-z \vec{\beta}+\bar{\alpha} }{\sqrt{r}}  \\
\end{matrix}  \right) \; := \;
 \left(  \begin{matrix}
\mu  & \eta \\
-\bar{\eta} & \bar{\mu} \\
\end{matrix}  \right).
\end{equation}

The matrix $\Phi_{T( \sigma g )}$ in  (\ref{AuxM1}) is given by:
\begin{equation}\label{MarV}
  \Phi_{T( \sigma g )} =  \left(  \; \begin{matrix}
  \Re \, (\mu^2 - \eta^2)                & - \Im \, (\mu^2 + \eta^2)          & 2 \,  \Re \, ( \mu \eta ) \\
  \Im \, ( \mu^2 -  \eta^2 )             & \Re \, ( \mu^2 +  \eta^2 )             &  2 \, \Im \, (  \mu \eta  ) \\
 - 2 \, \Re \, (  \overline{\mu} \eta )  & 2 \, \Im \, (  \mu\overline{\eta }) &   \abs{\mu}^2 - \abs{\eta}^2  \\
\end{matrix} \;  \right).
\end{equation}

\bigskip

On the other hand using the Iwasawa decomposition of $\sigma$ we obtain:
\[
T(\sigma) = \left(  \begin{matrix}
\alpha & \beta \\
-\bar{\beta} & \bar{\alpha} \\
\end{matrix}  \right)
\]
with
\begin{equation}\label{T}
  \alpha = \frac{ \overline{d} }{ \sqrt{t} }  \hspace{4cm}
     \beta = -\frac{  \overline{c} }{ \sqrt{t} },
\end{equation}
where $t:= \abs{c}^2+\abs{d}^2$. Using the identities in (\ref{T}) we see that:
\begin{equation}\label{Ser15.5}
  rt = \left\vert cz+d \right\vert^2 +    \lambda^2  \abs{c}^2= \Omega.
\end{equation}

\bigskip
Let us see how $\mu$ and $\eta$ of matrix (\ref{Ser13}) change using the identities  (\ref{T}),  we obtain the following equations:
\begin{equation}\label{XX}
  \mu =  \frac{ \overline{z} \overline{c} + \overline{d}  }{\sqrt{\Omega}}  \hspace{4cm}
     \eta = - \frac{ \lambda \overline{c} }{\sqrt{\Omega}}.
\end{equation}

\bigskip
Let us compute some entries of matrix $\Phi_{T( \sigma g )}$ in (\ref{MarV}) using  identities  (\ref{XX}). We see that:
\[
   \mu \overline{\eta} =  - \frac{\lambda}{\Omega} \big(  \overline{z} \abs{c}^2 + c\overline{d} \big).
\]
Hence,
\begin{align}
\hspace{3cm} -2 \Re \, (\overline{\mu} \eta) &  \; = \;   \frac{ \lambda  }{\Omega }      \Big[ \abs{c}^2(\overline{z}+z)+(c\overline{d} + \overline{c} d)  \Big].    &&    \label{Ser21}
\end{align}

\bigskip
Consider the following difference,
\[
   \mu^2- \eta^2 =  \frac{  \overline{z}^2 \overline{c}^2 + 2 \overline{zcd} + \overline{d}^2 - \lambda^2 \overline{c}^2}{\Omega}.
\]
Hence
\begin{align}
 \Re \, (\mu^2- \eta^2) & \; = \;   \frac{1}{2\Omega}
              \Big[ (\overline{z}^2 \overline{c}^2+z^2c^2)+2(\overline{zcd}+zcd) + (\overline{d}^2+d^2) -
              \lambda^2 (\overline{c}^2+c^2) \Big].    &&             \label{Ser25}
\end{align}
\hspace{-0.5cm}
\begin{align}
 - \Im \, (\mu^2 - \eta^2) & \; = \;  \frac{i}{2\Omega}
              \Big[ (\overline{z}^2 \overline{c}^2-z^2c^2)+2(\overline{zcd}-zcd) + (\overline{d}^2-d^2) -
              \lambda^2 (\overline{c}^2-c^2) \Big].    &&             \label{Ser26}
\end{align}

\bigskip
The first column of matrix $\Phi_{T( \sigma g )}$ in (\ref{MarV}) is given by identities (\ref{Ser25}), (\ref{Ser26}) and (\ref{Ser21}),
\begin{align}
  \Phi_{T( \sigma g )} (  e_1 )^T & \; = \;
\left(  \begin{matrix}
\frac{1}{2\Omega}       \Big[ (\overline{z}^2 \overline{c}^2+z^2c^2)+2(\overline{zcd}+zcd) + (\overline{d}^2+d^2) -
              \lambda^2 (\overline{c}^2+c^2) \Big]  \\
- \frac{i}{2\Omega}          \Big[ (\overline{z}^2 \overline{c}^2-z^2c^2)+2(\overline{zcd}-zcd) + (\overline{d}^2-d^2) -
              \lambda^2 (\overline{c}^2-c^2) \Big] \\
\frac{ \lambda  }{\Omega }
                    \Big[ \abs{c}^2(\overline{z}+z)+(c\overline{d} + \overline{c} d)  \Big]  \\
\end{matrix}  \right)    &&  \nonumber \\
                                    & \; = \;
\left(  \begin{matrix}
\langle  d \sigma_P \, e_1, e_1 \rangle^{\R^3}  \\
\langle  d \sigma_P \, e_1, e_2 \rangle^{\R^3}  \\
-\langle d \sigma_P \, e_1, e_3 \rangle^{\R^3} \\
\end{matrix}  \right)    && \text {by lemma (\ref{Series5})} \nonumber \\
                                     &  \; = \;
B \circ  R(d\sigma,P) \, \big( e_1 \big)^T \; = \; 
 B \circ R(d\sigma, P) \circ  B \, \big( e_1 \big)^T.  \nonumber
\end{align}

Analogously for $e_2$ and $e_3$.
\end{proof}

\begin{Corollary-non} By the previous lemma,  $ \text{det} \, \big(  R(d\sigma,P) \big)=1$, and  $R(d\sigma, P)$ preserves the Euclidean norm. Therefore $R(d\sigma,P) \in \SO(3)$.
\end{Corollary-non}

%\bigskip
Later we will use the following property of Wigner matrices. Let $R,T \in \SO(3)$, $l \in \N$, $b,m \in \Z$ such that $b,m \in [-l,l]$. Then
%\begin{equation}\label{WI11}
%  D^l_{bm} (I) \; = \; \delta_{bm}.
%\end{equation}
%\vspace{-0.4cm}
\begin{equation}\label{WI10}
    D^l_{bm} (R \circ T) \; = \; \sum_{a=-l}^{l}  D^l_{am} ( T)  \cdot  D^l_{ba} (R ).
\end{equation}
%\begin{equation}\label{WI12}
%     D^l_{bm} (R \circ S \circ T) \; = \; \sum_{a=-l}^{l} \sum_{c=-l}^{l} D^l_{cm} ( T)  \cdot  D^l_{ac} (S ) \cdot  D^l_{ba} ( R ).
%\end{equation}
This is a consequence of the fact that for fixed $l$, the entries $ D^l_{bm}$ are the coefficients of a representation.

\subsection{Definition and properties of Eisenstein series $ \widehat{E}_{km}^l(g,s)$}

\begin{defi}\label{def0}
Let $l \in \N$, $k,m \in \Z$ such that $k,m \in [-l,l]$. We denote by $\Gamma'_{\infty} \backslash \Gamma$ the set of right cosets of $\Gamma'_{\infty}$ in $\Gamma$. If $s \in \C$ is such that $\Re(s) > 1$, the Eisenstein series $\widehat{E}_{km}^l(\cdot,s): \SL(2,\C) \longrightarrow \C$, associated to $\Gamma$ at the cusp $\infty$, are defined by the following  formula:
\[ % \mathlarger
    \displaystyle \widehat{E}_{km}^l (g,s) \; := \;  \cfrac{1}{[\Gamma_{\infty} : \Gamma'_{\infty} ]} \,
    ‎‎\sum_{\sigma \in  \Gamma'_{\infty} \backslash \Gamma}
        \overline{ D_{km}^l \big( \Phi_{ T(\sigma  g )^{-1} }    \big) } \; \Im \,   \sigma g (j)^{1+s},
\]
the notation  $\Im \,   \sigma g (j)^{1+s}$ corresponds to $ \big( \Im \,   \sigma g (j) \big)^{1+s}$.
\end{defi}

\bigskip
If $f_{km}^l(\cdot, s) : \SL(2,\C) \longrightarrow \C $ is defined by the formula:
\begin{equation}\label{Deffff}
   f_{km}^l(g,s) \; := \; \overline{ D_{km}^l \big( \Phi_{ T( g )^{-1}} \big) } \; \Im \,  g (j)^{1+s}, \; \; \; \Re(s) > 1.
\end{equation}

Then
\begin{equation}\label{def00}
\widehat{E}_{km}^l (g,s) \; = \;  \cfrac{1}{[\Gamma_{\infty} : \Gamma'_{\infty} ]} \,
                \sum_{\sigma \in  \Gamma'_{\infty} \backslash \Gamma} f_{km}^l (\sigma g,s ).
\end{equation}

\bigskip
\begin{Lemma-non}%\label{Ebiendef}
The series $\widehat{E}_{km}^l (g,s)$ are well defined.
\end{Lemma-non}
\begin{proof}
Let $\gamma_\infty g \in \Gamma'_{\infty} \backslash \Gamma$, with $g=n[z]a[\lambda] K  \in \Gamma $,  $\gamma_\infty :=
  \left(  \begin{matrix}
1 & z \\
0 & 1 \\
\end{matrix}  \right) \in \Gamma'_{\infty}$ for some $z \in  \mathcal{O}_D$. Then,  $\gamma_\infty g \; = \; 
n[z+q]a[\lambda] K$ and 
\begin{equation}\label{def001}
  T (\gamma_\infty g ) \; = \; K .
\end{equation}

\bigskip
One has the following chain of equalities:
\begin{align}
f_{km}^l (\gamma_\infty g)  & \; = \;
      \overline{ D_{km}^l \big( \Phi_{ T(\gamma_\infty  g )^{-1}} \big) } \; \Im \,   \gamma_\infty  g (j)^{1+s}
                     && \text {by  (\ref{Deffff}) } \nonumber \\
             & \; = \;
      \overline{ D_{km}^l \big( \Phi_{  K^{-1} } \big) } \; \Im \,   \gamma_\infty g (j)^{1+s}
                     && \text {by (\ref{def001})} \nonumber \\
                  & \; = \;
      \overline{ D_{km}^l \big( \Phi_{ T( g )^{-1}} \big) } \; \Im \,    g (j)^{1+s}    \; = \; f_{km}^l (g).                              &&  \nonumber
\end{align}
Hence, the function $f_{km}^l$ is constant on right cosets $\Gamma'_{\infty} \backslash \Gamma$, using (\ref{def00}) we conclude that $\widehat{E}_{km}^l (g,s) $ are well defined.
\end{proof}

\bigskip
The series $\widehat{E}_{km}^l (g,s) $ converge for $\Re(s) > 1$. In fact, convergence follows from the convergence of the classic Eisenstein series ${E} (P,s) $, see   \cite{Lok04}, page 30  or 
\cite{Els98}, chapter 3.

% ===============
\bigskip
\begin{lemm}\label{def003}
If $s \in \C$ is such that $\Re(s) > 1$. Then:
\[
    \widehat{E}_{km}^l (\gamma g,s) \; = \; \widehat{E}_{km}^l (g,s), \; \; \; \forall \gamma \in \Gamma.
\]
\end{lemm}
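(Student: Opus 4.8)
The plan is to combine two facts already at hand: the unnamed lemma just proved shows that the summand $f_{km}^l(\sigma g,s)$ depends only on the right coset $\Gamma'_\infty\sigma$, and the series converges absolutely for $\Re(s)>1$, so its terms may be rearranged freely. Granting these, left $\Gamma$-invariance reduces to the elementary observation that right multiplication by a fixed $\gamma\in\Gamma$ permutes the right cosets $\Gamma'_\infty\backslash\Gamma$.

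First I would start from the coset form (\ref{def00}) and substitute $\gamma g$ for $g$:
\[
\widehat{E}_{km}^l(\gamma g,s)\;=\;\frac{1}{[\Gamma_{\infty}:\Gamma'_{\infty}]}\sum_{\sigma\in\Gamma'_{\infty}\backslash\Gamma} f_{km}^l(\sigma\gamma g,s),
\]
where the sum runs over a fixed complete system of representatives of the right cosets. Then I would introduce the reindexing $\sigma':=\sigma\gamma$. The one point that must be checked is that the assignment $\Gamma'_\infty\sigma\mapsto\Gamma'_\infty\sigma\gamma$ is a well-defined bijection of $\Gamma'_\infty\backslash\Gamma$ onto itself. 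It is well-defined because $\Gamma'_\infty(\sigma\gamma)=(\Gamma'_\infty\sigma)\gamma$, so the image coset depends only on the source coset; and it is a bijection since right multiplication by $\gamma^{-1}\in\Gamma$ furnishes its inverse. Hence, as $\sigma$ runs over a complete system of representatives of $\Gamma'_\infty\backslash\Gamma$, so does $\sigma'=\sigma\gamma$.

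After this substitution the sum becomes $\sum_{\sigma'\in\Gamma'_\infty\backslash\Gamma} f_{km}^l(\sigma' g,s)$, which by (\ref{def00}) equals exactly $[\Gamma_{\infty}:\Gamma'_{\infty}]\,\widehat{E}_{km}^l(g,s)$, giving the claim. I expect no genuine analytic obstacle here: the only care required is the bookkeeping confirming that each reindexed term $f_{km}^l(\sigma\gamma g,s)=f_{km}^l(\sigma' g,s)$ matches a term of the original series (which relies on $f_{km}^l$ being constant on right cosets, the content of the preceding lemma), and the appeal to absolute convergence for $\Re(s)>1$ to justify rearranging the summation.
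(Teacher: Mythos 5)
Your proposal is correct and is essentially identical to the paper's own proof, which performs the same change of variable $\eta = \sigma\gamma$ in the sum over $\Gamma'_{\infty}\backslash\Gamma$ to recover $\widehat{E}_{km}^l(g,s)$. Your additional remarks --- that the reindexing $\Gamma'_{\infty}\sigma \mapsto \Gamma'_{\infty}\sigma\gamma$ is a well-defined bijection of cosets, that the summand is constant on cosets (the preceding unnamed lemma), and that absolute convergence for $\Re(s)>1$ justifies the rearrangement --- are exactly the points the paper leaves implicit, so you have merely made the same argument more careful.
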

\begin{proof}
 By definition  (\ref{def0}):
\[ % \mathlarger
    \widehat{E}_{km}^l (\gamma g,s) \; = \;  \cfrac{1}{[\Gamma_{\infty} : \Gamma'_{\infty} ]} \,  ‎\sum_{\sigma \in  \Gamma'_{\infty} \backslash \Gamma}
        \overline{ D_{km}^l \big( \Phi_{ T(\sigma \gamma g )^{-1}} \big)  } \; \Im \,   \sigma \gamma g (j)^{1+s}.
\]
Making the change of variable $\eta = \sigma \gamma \in \Gamma'_{\infty} \backslash \Gamma$, we can re-write the previous expression as follows:
\[ % \mathlarger
    \widehat{E}_{km}^l (\gamma g,s) \; = \;   \cfrac{1}{[\Gamma_{\infty} : \Gamma'_{\infty} ]} \,
    ‎\sum_{\eta \in  \Gamma'_{\infty} \backslash \Gamma}
        \overline{ D_{km}^l \big( \Phi_{ T(\eta  g )^{-1}} \big)  } \; \Im \,   \eta g (j)^{1+s},
\]
The previous sum is precisely $\widehat{E}_{km}^l (g,s)$.
\end{proof}

% ===========

\bigskip
\subsection{Definition and properties of the series $ \, E_{km}^l(P,s)$.}
\begin{defi}\label{def004}
Let $l \in \N$, $k,m \in \Z$ such that $k,m \in [-l,l]$, $s \in \C$ and $\Re(s) > 1$, we define the series
$
 E_{km}^l(\cdot,s): \H^3 \longrightarrow \C$, associated to $\Gamma$ at the cusp
 $\infty$,  by the formula:
\[ % \mathlarger
    \displaystyle E_{km}^l (P,s) \; := \; \cfrac{1}{[\Gamma_{\infty} : \Gamma'_{\infty} ]} \,
    ‎‎\sum_{\sigma \in  \Gamma'_{\infty} \backslash \Gamma}
        \overline{ D_{km}^l \big( \Phi_{ T( \sigma n[z] a[\lambda] )^{-1}} \big) } \; \Im \,   \sigma P^{1+s}
\]
with  $P=z+\lambda j  \in \H^3$. The notation $ \Im \,   \sigma P^{1+s}$ correspond to $ \big( \Im \,   \sigma P \big)^{1+s}.$ 
 \end{defi}

%\newpage %\bigskip
If  $g_P=n[z]a[\lambda]$,  $\Re(s) > 1$, then using  (\ref{Pre6}) one obtains
\begin{equation}\label{def0004}
   \widehat{E}_{km}^l (g_P,s) \; = \; E_{km}^l (P,s).
\end{equation}

\bigskip
The identity (\ref{def0004}) implies that the series $E_{km}^l (P,s)$ are well defined and are convergent in the half-plane  $\Re(s) > 1$. Although  $E_{km}^l (P,s)$  do not define in general  Eisenstein series in $\H^3$, except for $l=k=m=0$, since they are not invariant under $\Gamma$, however they do admit useful Fourier expansions.

\begin{lemm}\label{def005}
 Let $s \in \C$ with $\Re(s) > 1$, then $\forall z \in \Lambda_D \subset \C$, $\forall q \in \C$ the following equalities holds:
\[
    E_{km}^l (P_{z+q},s) \; = \; E_{km}^l (P_q,s).
\]
\end{lemm}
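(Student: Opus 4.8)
The plan is to reduce this periodicity statement to the left $\Gamma$-invariance of $\widehat{E}_{km}^l$ that was already established in Lemma~\ref{def003}, by routing everything through the identity (\ref{def0004}), which identifies $E_{km}^l(P,s)$ with $\widehat{E}_{km}^l(g_P,s)$ for $g_P = n[z]a[\lambda]$.

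First I would observe that a lattice translation of the base point corresponds precisely to left multiplication by an element of the unipotent group $\Gamma'_{\infty}$. Concretely, writing $P_{z+q} = (z+q)+\lambda j$ and $P_q = q+\lambda j$ (the same height $\lambda$), the associated Iwasawa representatives are $g_{P_{z+q}} = n[z+q]\,a[\lambda]$ and $g_{P_q} = n[q]\,a[\lambda]$. Using the additive group law $n[z+q] = n[z]\,n[q]$ for upper unipotent matrices, this factors as $g_{P_{z+q}} = n[z]\,g_{P_q}$.

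The key point is that $\Lambda_D$ is exactly the additive lattice $\mathcal{O}_D \subset \C$, i.e. the translation lattice of $\Gamma'_{\infty}$, so for $z \in \Lambda_D$ the matrix $n[z]$ lies in $\Gamma'_{\infty} \subset \Gamma$. Then the chain
\begin{align*}
E_{km}^l(P_{z+q},s)
&= \widehat{E}_{km}^l(g_{P_{z+q}},s)
= \widehat{E}_{km}^l\big(n[z]\,g_{P_q},s\big) \\
&= \widehat{E}_{km}^l(g_{P_q},s)
= E_{km}^l(P_q,s)
\end{align*}
completes the argument: the first and last equalities are (\ref{def0004}), and the middle one is Lemma~\ref{def003} applied with $\gamma = n[z] \in \Gamma$.

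I expect no serious obstacle here, since the content is essentially organizational. The one point worth flagging is that the argument must be run through $\widehat{E}_{km}^l$ rather than by manipulating the defining series of $E_{km}^l$ directly. In the latter, the translation $q \mapsto z+q$ affects both the height factor $\Im\,\sigma P$ and the frame factor $\overline{D_{km}^l(\Phi_{T(\sigma n[z]a[\lambda])^{-1}})}$, and disentangling the Wigner contribution by hand would be cumbersome. Lemma~\ref{def003} already absorbs the full expression, including the $\Phi_{T(\cdot)^{-1}}$ term, so this complication never arises.
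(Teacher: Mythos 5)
Your proposal is correct and is essentially identical to the paper's own proof: both reduce the statement to the left $\Gamma$-invariance of $\widehat{E}_{km}^l$ (Lemma~\ref{def003}) applied with $\gamma = n[z] \in \Gamma'_{\infty}$, passing back and forth between $E_{km}^l$ and $\widehat{E}_{km}^l$ via (\ref{def0004}) and the factorization $n[z+q]\,a[\lambda] = n[z]\,n[q]\,a[\lambda]$. The only difference is the direction in which the chain of equalities is written, which is immaterial.
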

\begin{proof}
  Let $\gamma_\infty :=
\left(  \begin{matrix}
1 & z \\
0 & 1 \\
\end{matrix}  \right)  \in \Gamma'_{\infty}$ with  $z \in \Lambda_D$, and   $g_w=n[w]a[\lambda]$, $\forall w \in \C$. Therefore
\begin{align}
E_{km}^l (P_q,s)  & \; = \;  \widehat{E}_{km}^l (g_q,s)   && \text {by (\ref{def0004})} \nonumber \\
             & \; = \;   \widehat{E}_{km}^l ( \gamma_\infty \cdot g_q,s)   && \text {by  lemma (\ref{def003})} \nonumber \\
             & \; = \; \widehat{E}_{km}^l (g_{z+q},s) \; = \; E_{km}^l (P_{z+q},s).   && \text {by (\ref{def0004})} \nonumber
\end{align}
\end{proof}

Lemma (\ref{def005}) implies that the functions $ E_{km}^l (z+\lambda j,s)$ with $s$ and $\lambda$ fixed are periodic in $z$ with respect to the lattice $\Lambda_D$, hence they admit a Fourier expansion in the variable $z$, \emph{i.e.}
 \begin{equation*}%\label{def8}
 \displaystyle E_{km}^l (z+\lambda j,s) \; = \; \sum_{w \in \Lambda_D^\ast}  b_{km}^l (\lambda,s)_w \,  e^{  2 \pi i \, \langle z, w \rangle },
\end{equation*}
where $\Lambda_D^\ast= \{ w \in \C \, ; \,  \langle  w, q \rangle \in \Z, \; \; \forall q \in \Lambda \}$ is the dual lattice of $\Lambda_D$. The Fourier coefficients are given as follows:

 \begin{equation}\label{def9}
         \displaystyle  b_{km}^l (\lambda,s)_w \; = \; \frac{1}{\abs{\Lambda_D}}
         \int_{\R^2 / \Lambda_D}  E_{km}^l (z+\lambda j,s)  \, e^{ - 2 \pi i \, \langle z, w \rangle } \, dxdy,
\end{equation}
with $w \in \Lambda_D^\ast$. Here $\abs{\Lambda_D}$ denotes the area  of the flat torus $\R^2 / \Lambda_D$.

\bigskip 
\subsection{Relation between the series  $E_{km}^l (P,s)$ and the auxiliary series $ H_{km}^l (P,s)$}

If $s \in \C$ with $\Re \, (s) > 1$, $g_P=n[z]a[\lambda]$ with $P=z+\lambda j \in \H^3$, $\sigma = \left(  \begin{matrix}
a & b \\
c & d \\
\end{matrix}  \right) \in \SL(2, \C)$, by lemma (\ref{Series9}) it holds
\[ %begin{equation}\label{Relac1}
  \Phi_{ T(\sigma  g )}  \; = \;  B \circ R(d\sigma,P) \circ B.
\] %end{equation}
%Since $B=B^{-1}$,  %de (\ref{Relac1})
%\[
%    R(d\sigma,z, \lambda)^{-1} \; = \; B \circ \big( \Phi_{ T(\sigma  %g )} \big)^{-1} \circ   B,
%\]
%By  (\ref{HomorfismoPhi2}) it holds that  $\big( \Phi_{ T(\sigma  g )} \big)^{-1} = \Phi_{ T(\sigma  g )^{-1}}$,
Then
\begin{equation}\label{Relac2}
R(d\sigma,P)^{-1} \; = \; (-B) \circ  \Phi_{ T(\sigma  g )^{-1} }  \circ  (- B),
\end{equation}
where $-B \in \SO(3)$.

\bigskip
Let $l \in \N$, $k,m \in  \Z$, such that $k,m \in [-l,l]$. Hence, by (\ref{WI10})
\begin{align}
D_{km}^l \big( R(d\sigma,P)^{-1} \big)   & \; = \;  \sum_{a=-l}^{l} \sum_{c=-l}^{l}   D^l_{cm} (-B) \cdot D^l_{ac}  \big( \Phi_{ T(\sigma  g )^{-1} } \big) \cdot  D^l_{ka} (-B).
                     &&  \label{Relac2.5}
\end{align}

\bigskip
To prove the equidistribution of the horocycle foliation we need the following series:
\begin{equation}\label{Hlkm}
  H_{km}^l (P,s)  \; := \;   \cfrac{1}{[\Gamma_{\infty} : \Gamma'_{\infty} ]} \,
 \sum_{\sigma \in  \Gamma'_{\infty} \backslash \Gamma} \overline{ D_{km}^l \big( R(d\sigma,P)^{-1}  \big) }  \,   \Im \, \sigma P^{1+s}.
\end{equation}

%\bigskip
Substituting (\ref{Relac2.5}) in  (\ref{Hlkm}) one has:
\begin{align}
 H_{km}^l (P,s)  & \; = \;    \sum_{a=-l}^{l} \sum_{c=-l}^{l}
                \overline{D^l_{cm} (-B)} \cdot \overline{ D^l_{ka} (-B)  } \,
                  \Bigg[  \cfrac{1}{[\Gamma_{\infty} : \Gamma'_{\infty} ]} \,
                  ‎‎\sum_{\sigma \in \Gamma'_{\infty} \backslash \Gamma}  % \mathlarger
                   \overline {D^l_{ac} \big ( \Phi_{ T(\sigma  g )^{-1} } \big) } \;
       \Im \, \sigma P^{1+s} \Bigg]   &&  \nonumber \\
             & \; = \;   \sum_{a=-l}^{l} \sum_{c=-l}^{l}
                \overline{D^l_{cm} (-B)} \cdot \overline{ D^l_{ka} (-B)  } \; E_{ac}^l (P,s).   &&  \nonumber
\end{align}

%\bigskip
Since $-B = ROT(\pi,0,0)$ one has that $D^l_{ab} (-B)=  e^{ia \pi} \, \delta_{ab}$. Therefore it has
\begin{equation}\label{relacionHE}
   H_{km}^l (P,s)    =  e^{-i (k+m) \pi}  \cdot E_{km}^l (P,s). %, \; \; \; \Re (s) > 1.
\end{equation}

\bigskip
\subsection{Meromorphic continuation}
In \cite{Langlands} Robert Langlands developed the general theory of Eisenstein series, proved that they satisfy a functional equation and proved the analytic or meromorphic continuation of 
$\widehat{E}_{km}^l (g,s)$ with respect $s$ to all the complex plane. This theory is enhanced in \cite{MW},  chapter IV, where a proof of meromorphic continuation of Eisenstein series is given and attributed to Herv\'e Jacquet. See also \cite{Garrett} on the web page of Paul Garrett.
Also, since the Bianchi groups are special, the analytic or meromorphic continuation can be obtained without using the Langlands work, see \cite{BruMoto}, page 44, for a proof in the particular case $\Gamma_1=\SL(2, \Z[i])$, the general case is similar.

\bigskip
In particular, by  (\ref{def0004}) and \cite{Langlands, MW}, one has the meromorphic continuation of the series
$ E_{km}^l (P,s)$. \emph{More specifically we will assume the following result}:

%\bigskip
\begin{theo}\label{conje}
Let $l \in \N$, $k,m \in \Z$ such that $k,m \in [-l,l]$, $s \in \C$  with $\Re(s) > 1$. The  series  $\widehat{E}_{km}^l (g,s)$ admits a meromorphic continuation in the variable $s$ to all the complex plane. The same thing is true for the series $ E_{km}^l (P,s)$. Furthermore, if $l > 0$ then  $\widehat{E}_{km}^l (g,s)$ and $ E_{km}^l (P,s)$ admit an analytic continuation for $\Re(s) \geq 1$. If $l=k=m=0$ it is well-known that the series $\widehat{E}_{00}^0 (g,s) = E_{00}^0 (P,s)= \tfrac{1}{ [\Gamma_{\infty} : \Gamma'_{ \infty} ]} \, E(P,s)$ have a simple pole at $s=1$.
\end{theo}
%\begin{equation*}%\label{Conjetura2}
%  \text{Res} \, \big(  E_{km}^l (z,\lambda,s) , s=1  \big) = 0.
%\end{equation*}

% ======================

%\newpage
\section{Geodesic flow and horocycle foliations}

Geodesic flows on Riemannian manifolds of negative sectional curvature are the paradigm of hyperbolic dynamics in the sense
of Smale \cite{Smale} and they are examples of Anosov flows \cite{Anosov}. In what follows we will restrict our attention to 
the case of hyperbolic 3-manifolds or orbifolds \ie manifolds (or orbifolds) of the form  $\H^3/\Gamma$ where
$\Gamma$ is a discrete subgroup of isometries of $\H^3$. In this case the geodesic flow and corresponding horocycle foliations
are orbits of actions of subgroups on locally homogeneous spaces. As references on this subject we recommend \cite{Da1, DB, GV, MS, Starkov}.

\bigskip
The  unit tangent bundle of $\H^3$ is:
\[
  \displaystyle  T_1 (\H^3) \; = \; \bigcup_{(w,\eta) \in \H^3} \S^{\H^3}_{(w,\eta)},
\]
where
\[
  \S^{\H^3}_{(w,\eta)} \; := \; \big \{ \, v \in T_{(w,\eta)} \H^3 \, ; \, \norm{ v }^{\H^3}_{(w,\eta)} = 1     \, \big \}.
\]

In addition,
\[
  \displaystyle  T_1(M_D)  \; = \; T_1 (\H^3)  / \Gamma_D^\ast,
\]
where $\Gamma_D^\ast$ refers to the natural action of $\Gamma_D$ on $T_1 (\H^3)$, i.e., one has the usual action of  $\Gamma_D$ on  $\H^3$  and the action on the unit tangent vector is given via the differential.

\bigskip
First we  briefly recall some facts about the geodesic flow and the associated horocycle foliations (the stable and unstable foliations) of a hyperbolic 3-manifold $N$.
The geodesic flow $g_t:T_1(N)\longrightarrow{T_1(N)},\quad t\in\R$, is defined by a nonsingular vector field $X$ on the unit tangent bundle. \emph{This flow preserves Liouville measure.} For a hyperbolic manifold this flow is of Anosov type, which means that its tangent bundle splits as a real analytic (this is true for hyperbolic manifolds of constant negative curvature) Whitney sum of three bundles
$T(T_1(N))=E^s\oplus{E^u}\oplus{E^1}$. These bundles are invariant under the differential $dg_t$ for all
$t\in\R$. The bundle $E^1$ is the line bundle tangent to the orbits of $g_t$, i.e.  the line field generated by $X$. There exists constants $C,c>0$ and such that
\[
|dg_t(v)|\leq{C}e^{-ct}|v|, \quad \forall \,t>0, \, \text{and vector}\, v \,\, \text{in the bundle}\,\, E^s.
\]
\[
|dg_t(w)|\geq{C}e^{ct}|w|, \quad \forall \,t>0, \, \text{and vector}\, w \,\, \text{in the bundle}\,\, E^u.
\]

Since $N$ is a hyperbolic manifold the distributions $E^{ss}$ and $E^{uu}$ are both smoothly integrable (in fact they are real analytically integrable)
\ie  they are tangent to smooth 2-dimensional foliations $\cF^{ss}$ and $\cF^{uu}$, respectively. This are the \emph{strongly stable and unstable foliations} of the geodesic flow, respectively. The geodesic flow preserves these foliations so that the leaves of each of these foliations are permuted by $g_t$. Therefore
$E^s\oplus{E^1}$ and $E^u\oplus{E^1}$ are also integrable and tangent to smooth foliations
 $\cF^{s}$ and $\cF^{u}$ called \emph{center stable and center unstable foliations}, respectively. The leaves are densely immersed copies of $\R^3$ or $\R^2\times\bS^1$. There are countably many leaves diffeomorphic to $\R^2\times\bS^1$ which correspond to the closed geodesics of $N$.

If $N$ is compact the leaves of both $\cF^{ss}$ and $\cF^{uu}$ are densely embedded copies of $\R^2$. If $N$ is not compact but has finite volume then it has a finite number $n$ of cusps and there are exactly $n$ leaves $L^s_1,\dots,L^s_n$ and $L^u_1,\dots,L^u_n$ of  $\cF^{s}$ or $\cF^{u}$ which are diffeomorphic to $\bT^2\times\R$. If $M$ has only one cusp then all leaves of $\cF^{ss}$ respectively $\cF^{uu}$ are diffeomorphic to $\R^2$ except for a one-parameter family of tori contained in $L^s_1$ or $L^u_1$, respectively. When $M$ is not compact but has finite volume one has Dani's dichotomy:
\emph{A leaf of  $\cF^{ss}$  (respectively $\cF^{uu}$) is either dense or a torus} \cite{Da}.

\begin{rem}When $N$ is a (complete) hyperbolic orbifold of finite volume the stable and unstable foliations are actually singular foliations and the leaves are 2-dimensional Euclidean orbifolds.
\end{rem}
\begin{rem} {\bf We will use from now on the name \bf{horocycle foliations}
for $\cF^{ss}$ and $\cF^{uu}$.}
\end{rem}

\section{Equidistribution of horocycle foliations}
Let 
\[
 C^{\infty}_{0} \big( T_1 (M_D) \big) \; := \; \{ \, f:T_1 (M_D) \longrightarrow \C \, ; \, f \text{ is differentiable with compact support}  \, \},
\]
\[
C_{0}^0 \big( T_1 (M_D) \big) \; := \; \{ \, f:T_1 (M_D) \longrightarrow \C \, ; \, f \text{ is continuous with compact support}  \, \}.
\]

\bigskip
\noindent Let $\pi:{T_1} (\H^3) \longrightarrow{T_1(M_D)}$  be the canonical projection.
\vskip.2cm
\noindent	For each $\lambda \in \R$ with $\lambda > 0$ consider the surface
\[
    \mathcal{P}_\lambda \; := \; \{ (z,\lambda, e_1) \; ; \;  z \in \C  \}  \subset T_1 (\H^3).
\]
We claim that $\pi(\mathcal{P}_\lambda)\subset{T_1(M_D)}$ is a closed Euclidean
immersed 2-orbifold. We will denote this immersed 2-orbifold by $\widetilde{T}_\lambda $. This fact follows immediately from the fact that if
$\gamma^{w,\epsilon} := \left( \begin{matrix}
\epsilon &  w \\
0 & \epsilon^{-1} \\
\end{matrix}  \right) \in \Gamma_{D,\infty}$, with $w \in \mathcal{O}_D$ and $\epsilon\in\mathcal{O}^{\times}_D $ a unit,
 then
\[
 \gamma^{w,\epsilon} (z + \lambda j) \; = \; \epsilon^2 z+\epsilon{w} + \lambda j,
\]
\[
   d \gamma^{w,\epsilon}  (e_1)_{(z,\lambda)} = \epsilon^2(e_1)_{(\epsilon^2 z+\epsilon{w},\lambda)}.
\]
\noindent The orbifold singularities arise from the units of the field.

\begin{rem} The family of tori $\left\{T_\lambda\right\}_{\lambda>0}$ is the family of compact horocycles
(stable leaves) in the previous section.
\end{rem}

\begin{defi}\label{nulambda}
Let $\lambda \in \R$ such that $\lambda > 0$. We define probability measures $\nu (\lambda)$  with support on the immersed 2-orbifold  $\widetilde{T}_\lambda \subset T_1 (M_D)$ as follows:
 \[
     \nu (\lambda ) (f) \; : = \;  \frac{1}{\abs{\Lambda_D}} \int_{\R^2 / \Lambda_D}  f ( x + iy +\lambda j, e_1 ) \, dxdy,
     \; \; \;  \forall f \in C^{\infty}_{0} \big( T_1 (M_D) \big).
 \]
\end{defi}

\bigskip
Following Zagier \cite{Zag79} and Sarnak \cite{Sar80}, one considers a ``Mellin transform'' of the measures $\nu(\lambda)$  with argument $s \in \C$ such that $\Re (s)>1$
\[
 E(s) \; : = \; \int_0^\infty \nu (\lambda) \, \lambda^{s-2}   d\lambda.
\]

\begin{defi}\label{Mellinf}
If $s \in \C$ is such that $\Re(s) >1$ and $f \in  C^{\infty}_{0} \big( T_1 (M_D) \big)$, the \emph{Mellin transform}  $\mathfrak{M} (f,s)$ of $f$ is defined by the following equation:
\begin{align}
\mathfrak{M}(f,s) \; = \; \langle E(s), f \rangle   & \; := \;  \int_0^\infty \nu (\lambda)(f) \, \lambda^{s-2}  d\lambda  &&  \nonumber \\
             & \; = \;  \frac{1}{\abs{\Lambda_D}}  \int_0^\infty \int_{\R^2 / \Lambda_D}  f ( x+yi + \lambda j, e_1 ) \, \lambda^{s+1}  \, \frac{dxdyd\lambda}{\lambda^3}.   &&  \nonumber
\end{align}
\end{defi}

\bigskip
Let $(w,\eta) \in \H^3$ and $\{e_1,e_2,e_3\}$ the Euclidean basis in $ T_{(w,\eta)} \R^3$.  We will denote by $\vec{v}(\vartheta, \varphi)_{(w,\eta)}$ the vector in  $ T_{(w,\eta)} \R^3$ defined as follows
%\vspace{-0.3cm}
\begin{equation}\label{LP1}
\vec{v}(\vartheta, \varphi)_{(w,\eta)} := \eta \, \cos \varphi \,   \sin \vartheta \cdot e_1
 + \eta \,  \sin \varphi \, \sin \vartheta \cdot e_2
 + \eta \, \cos \vartheta \cdot e_3.
\end{equation}

The following equality hold:
\begin{equation}\label{LP3}
 \norm{\vec{v}(\vartheta, \varphi)_{(w,\eta)} }^{\R^3}     \; = \;  \eta, \hspace{2cm} 
  \norm{\vec{v}(\vartheta, \varphi) }^{\H^3}_{(w,\eta)}    \; = \; 1.
\end{equation}

\bigskip
Let $f \in C^\infty_0 \big( T_1(M_D) \big)$, equivalently, $f$   is given by a differentiable function with compact support $f:T_1 (\H^3) \longrightarrow \C$  (we use the same letter) which is $\Gamma^\ast_D$ invariant. Let $P=z+\lambda j \in \H^3$, by (\ref{LP3}) we know that $\vec{v}(\vartheta, \varphi)_P  \in \S^{\H^3}_P$, hence we can evaluate $f$ in $(P, \vec{v}(\vartheta, \varphi)_P ) \in T_1 (\H^3)$, and invariance implies:
\begin{equation}\label{LP5}
 f \big( P, \vec{v}(\vartheta, \varphi)_P \big) \; = \;
    f \big( \gamma P, d\gamma_P \, \vec{v}(\vartheta, \varphi)_P \big), \; \; \; \forall \gamma \in \Gamma.
\end{equation}

\bigskip
Restricting  $f$ to the fibers $ \S^{\H^3}_P  \underset{\text{difeo.}}{\cong} \S^2 \longrightarrow \C$ we obtain the following function:
\[
   f \big( P,  \vec{v}( \cdot , \cdot )_P \big) : \S^2 \longrightarrow \C.
  % \S^{\H^3}_{(z,\lambda)}  \underset{\text{difeo.}}{\cong} \S^2 \longrightarrow \C,
\]

\bigskip
Assuming that $f$ admits a Laplace series in each fibre one has,
\begin{equation}\label{LP7}
  f \big( P,  \vec{v}( \vartheta, \varphi )_P \big) \; = \;
   \sum_{l=0}^\infty  \sum_{m=-l}^l  \widehat{f}_m^{\, l} (P) \cdot   Y^l_m (\vartheta, \varphi),
\end{equation}
where $(\vartheta, \varphi)$ denotes the point in $\S^2 \subset \R^3$ with spherical coordinates $(1, \vartheta, \varphi)$, one has also,
\begin{equation}\label{LP8}
  \widehat{f}_m^{\, l}  (P)  \; = \;
      \int_{\S^2} f \big( P, \vec{v}( \vartheta, \varphi )_P   \big)
       \, \overline{ Y_m^l (\vartheta, \varphi) } \, \sin \vartheta \, d \vartheta  d \varphi.
\end{equation}

\bigskip
Setting $\vartheta = \tfrac{\pi}{2}, \varphi = 0$ in (\ref{LP7}) since $\vec{v}( \tfrac{\pi}{2}, 0 )_P = \lambda \cdot {e_1}$, one write:
\begin{equation}\label{LP10}
  f ( P,  e_1 ) \; = \; \sum_{l=0}^\infty  \sum_{m=-l}^l  \widehat{f}_m^{\, l} (P) \cdot   Y^l_m ( e_1 ).
\end{equation}

\begin{lemm}\label{LP}
Let $l \in \N$, $m \in \Z$ be such that $m \in [-l,l]$, $f \in C^\infty_0 \big(T_1(M_D)\big)$. Then
\[
       \widehat{f}_m^{\, l} \big( \gamma P \big) \; = \; \sum_{k=-l}^l \overline{ D_{km}^l \big( R(d\gamma,P)^{-1}  \big) } \cdot
       \widehat{f}_k^{\, l} (P), \; \; \; \forall \gamma \in \Gamma.
\]
\end{lemm}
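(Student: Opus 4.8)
The plan is to reduce the transformation law for the coefficients $\widehat{f}_m^{\, l}$ to the rotation law (\ref{SumatoriaWigner}) for the spherical harmonics $Y_m^l$, by performing a measure-preserving change of variables on the fibre sphere $\S^2$ governed by the rotation $R(d\gamma,P)\in\SO(3)$.

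First I would record the geometric fact linking $d\gamma_P$ with the rotation. Writing $P=z+\lambda j$ and denoting by $u(\vartheta,\varphi)\in\S^2$ the unit vector with spherical coordinates $(\vartheta,\varphi)$, one has $\vec{v}(\vartheta,\varphi)_P=\lambda\,u(\vartheta,\varphi)$ by (\ref{LP1}). The definition (\ref{Rota9}) reads $d\gamma_P(v)=\tfrac{\Im\,\gamma P}{\lambda}\,R(d\gamma,P)\,v$, whence
\[
d\gamma_P\big(\vec{v}(\vartheta,\varphi)_P\big)\;=\;\Im(\gamma P)\cdot R(d\gamma,P)\,u(\vartheta,\varphi).
\]
Since $R(d\gamma,P)\in\SO(3)$ preserves the Euclidean norm, the right-hand side is $\Im(\gamma P)$ times a unit vector, hence equals $\vec{v}(\cdot)_{\gamma P}$ in the rotated direction. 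The precise role of the normalization $\tfrac{\lambda}{\Im\,\gamma P}$ in (\ref{Rota9}) is to absorb the change of the $j$-component in passing from $P$ to $\gamma P$, so that the image again has the correct length $\Im(\gamma P)$; this is the step that must be verified with care. Combining this with the invariance (\ref{LP5}) and setting $R:=R(d\gamma,P)$, $q:=R\,u(\vartheta,\varphi)$, I obtain, for every direction $q\in\S^2$,
\[
f\big(\gamma P,\,\Im(\gamma P)\,q\big)\;=\;f\big(P,\,\lambda\,R^{-1}q\big),
\]
i.e.\ the restriction of $f$ to the fibre over $\gamma P$ is the restriction over $P$ precomposed with $R^{-1}$.

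With this in hand I would substitute into the integral formula (\ref{LP8}) for $\widehat{f}_m^{\, l}(\gamma P)$ and change variables on $\S^2$ by $R$. Because $R\in\SO(3)$ preserves the surface measure $\sin\vartheta\,d\vartheta\,d\varphi$, the integral becomes
\[
\widehat{f}_m^{\, l}(\gamma P)\;=\;\int_{\S^2} f\big(P,\vec{v}(\vartheta,\varphi)_P\big)\,\overline{Y_m^l\big(R\,(\vartheta,\varphi)\big)}\,\sin\vartheta\,d\vartheta\,d\varphi.
\]
Applying (\ref{SumatoriaWigner}) with the rotation there taken to be $R^{-1}$ expands $Y_m^l\big(R\,(\vartheta,\varphi)\big)=\sum_{k=-l}^{l} D_{km}^l(R^{-1})\,Y_k^l(\vartheta,\varphi)$; conjugating, interchanging the finite sum with the integral, and recognizing each remaining integral as $\widehat{f}_k^{\, l}(P)$ via (\ref{LP8}), I arrive at
\[
\widehat{f}_m^{\, l}(\gamma P)\;=\;\sum_{k=-l}^{l}\overline{D_{km}^l\big(R(d\gamma,P)^{-1}\big)}\;\widehat{f}_k^{\, l}(P),
\]
which is the assertion.

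I expect the only genuinely delicate points to be the bookkeeping of inverses and orientations: keeping straight that (\ref{SumatoriaWigner}) must be invoked with $R^{-1}$ to produce $Y_m^l\big(R\,(\vartheta,\varphi)\big)$, and confirming that the factor $\tfrac{\lambda}{\Im\,\gamma P}$ makes $d\gamma_P$ carry $\vec{v}(\vartheta,\varphi)_P$ to a bona fide unit-tangent vector at $\gamma P$ rather than merely a rescaled one. Everything else is a measure-preserving change of variables together with the linearity of the Laplace expansion.
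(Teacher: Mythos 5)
Your proposal is correct and is essentially the paper's own argument: the paper likewise changes variables on the fibre sphere via $R(d\gamma,P)$ (its equation (\ref{LP13})), uses the invariance (\ref{LP5}) to identify the fibre restriction at $\gamma P$ with that at $P$ precomposed with the rotation (its (\ref{LP16})), invokes (\ref{SumatoriaWigner}) with $R(d\gamma,P)^{-1}$ to expand the rotated harmonic (its (\ref{LP14})), and uses that the rotation preserves $\sin\vartheta\,d\vartheta\,d\varphi$ (its (\ref{LP17})). The bookkeeping of inverses and of the normalizing factor $\lambda/\Im\,\gamma P$ that you flag as the delicate points is handled exactly as you describe, so there is nothing to add.
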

\begin{proof}
From (\ref{LP8}) one has:
\begin{equation}\label{LP11}
  \widehat{f}_m^{\, l} \big( \gamma P \big) \; = \;
      \int_{\S^2} f \big( \gamma P, \, \vec{v}( \vartheta, \varphi )_{\gamma P }   \big)
       \; \overline{ Y_m^l (\vartheta, \varphi) } \, \sin \vartheta \, d \vartheta \, d \varphi.
\end{equation}

\bigskip
Suppose we change variables in $\S^2 \subset \R^3$ using the rotation $R(d\gamma,P) \in \SO(3)$ defined  in (\ref{Rota9}),
\begin{equation}\label{LP13}
  R(d\gamma,P) \Bigg[ \frac{1}{\lambda} \; \vec{v}( \vartheta', \varphi' )_P \Bigg] \; = \;
    \frac{1}{\Im \,  \gamma P }  \; \vec{v}(\vartheta, \varphi)_{\gamma P}.
\end{equation}

\bigskip
The spherical harmonics change as follows under this rotation
\begin{align}
 Y^l_{m} (\vartheta, \varphi)  & \; = \; Y^l_{m} \Bigg( \frac{1}{\Im \, \gamma P}  \; \vec{v}(\vartheta, \varphi)_{\gamma P} \Bigg)
                              && \nonumber \\
     & \; = \;  Y^l_{m} \Bigg(  R(d\gamma,P) \Bigg[ \frac{1}{\lambda} \; \vec{v}( \vartheta', \varphi' )_P \Bigg] \Bigg)       && \text {by (\ref{LP13})} \nonumber \\
            & \; = \;  \sum_{k=-l}^{l} D^l_{km} \big( R(d\gamma,P)^{-1}  \big) \cdot
             Y^l_{k} \bigg( \frac{1}{\lambda} \; \vec{v}( \vartheta', \varphi' )_P \bigg).
     &&  \text {by (\ref{SumatoriaWigner})} \nonumber
\end{align}
Summarizing,
\begin{equation}\label{LP14}
  Y^l_{m} (\vartheta, \varphi) \; = \;
      \sum_{k=-l}^{l} D^l_{km} \big( R(d\gamma,P)^{-1}  \big) \cdot Y^l_{k} ( \vartheta', \varphi' ).
\end{equation}

\bigskip
On the other hand,
\begin{align}
 f \big( P, \, \vec{v}(\vartheta', \varphi')_P  \big)  & \; = \;
        f \big( \gamma P, d\gamma_P \vec{v}(\vartheta', \varphi')_P \big)
                 &&  \text {by (\ref{LP5})} \nonumber \\
            & \; = \;
        f \bigg( \gamma P, \Im \, \gamma P \cdot
         R(d\gamma, P) \Big[ \tfrac{1}{\lambda} \, \vec{v}( \vartheta', \varphi' )_P \Big]     \bigg)
          &&   \text {by (\ref{Rota9})} \nonumber \\
           & \; = \;
           f \big( \gamma P, \vec{v}(\vartheta, \varphi)_{\gamma P}  \big)             &&   \text {by (\ref{LP13})} \nonumber 
 \end{align}
In short, the invariance implies:
\begin{equation}\label{LP16}
  f \big( P, \vec{v}(\vartheta', \varphi')_P  \big)   \; = \;
f \big( \gamma P, \vec{v}(\vartheta, \varphi)_{\gamma P}  \big).
\end{equation}

\bigskip
In addition since $R(d\gamma,P)$ is an isometry of $\S^2$ one has:
\begin{equation}\label{LP17}
\sin \vartheta \, d \vartheta d \varphi \; = \; \sin \vartheta' \, d \vartheta' d \varphi'.
\end{equation}

%\bigskip
Substituting (\ref{LP14}), (\ref{LP16}) and (\ref{LP17}) in (\ref{LP11}) we obtain:
\begin{align}
\widehat{f}_m^{\, l} \big( \gamma P \big)  & \; = \;  \int_{\S^2}   f \big( P, \vec{v}(\vartheta', \varphi')_P  \big)
  \;  \overline{ \sum_{k=-l}^{l} D^l_{km} \big( R(d\gamma,P)^{-1}  \big)  \cdot    Y^l_{k} (\vartheta', \varphi') }
          \sin \vartheta' \, d \vartheta' d \varphi' && \nonumber \\
           & \; = \;
             \sum_{k=-l}^{l} \overline{ D^l_{km} \big( R(d\gamma ,P)^{-1}  \big)  }
           \cdot  \int_{\S^2}  f \big( P, \vec{v}(\vartheta', \varphi')_P  \big)
            \;  \overline{  Y^l_{k} (\vartheta', \varphi') }    \, \sin \vartheta' \, d \vartheta'  d \varphi'
                            &&   \nonumber \\
                                                  & \; = \;
             \sum_{k=-l}^{l} \overline{ D^l_{km} \big( R(d\gamma,P)^{-1}  \big)  } \cdot \widehat{f}_k^{\, l} (P).
                            &&  \text {by (\ref{LP8})} \nonumber
\end{align}
\end{proof}
Recall that
\begin{equation}\label{Anillo}
        \mathcal{O}_D  \; = \;  \left\{
        \begin{tabular}{cc}
        	$ \Z \big[ i \sqrt{D} \big] \; \; \; \; \, $                         & if $D \equiv 1,2 \text{ mod } 4 $ \\
        	$\Z \big[ \frac{1}{2} + i \frac{\sqrt{D}}{2} \big]$ &  if  $D \equiv 3 \text{ mod } 4 .$ \, \, \\
        \end{tabular}
\right.
\end{equation}

\bigskip
The ring of integers $\mathcal{O}_D$  is a lattice $\Lambda_D:=  \Z + w_D \Z  \subset \C$, where
\[
         w_D:  \; = \; \left\{
        \begin{tabular}{ccc}
        	$  i \sqrt{D}$     & \;if $D \equiv 1,2 \text{ mod } 4 $  \\
            $  \frac{1+i\sqrt{D}}{2}$ & if $D \equiv 3 \text{ mod } 4. \,  $  \\
        \end{tabular}
\right.
\]

We denote by $C_D$ the closed subset of $\C$ whose boundary is the parallelogram generated by  $1$ and $w_D$. Let
\begin{equation}\label{LS2}
  S_D \; := \; \{ \, (z,\lambda) \in \H^3 ; z \in C_D \,  \},
\end{equation}
the following identity holds:
\begin{equation}\label{LS3}
      \displaystyle S_D \; = \; \bigcup_{\sigma \in  \Gamma'_{\infty} / \Gamma} \sigma(F_D),
\end{equation}
where $F_D$ is an appropriate fundamental domain of $ \Gamma_D$.

\bigskip
\begin{prop}\label{PImp} Let  $f \in C^\infty_0 \big( T_1(M_D) \big)$, $s \in \C$ such that  $\Re(s)>1$. Then
\[
 \mathfrak{M}(f,s)  \; = \; \frac{[\Gamma_{\infty} : \Gamma'_{\infty} ]}{\abs{\Lambda_D}} \,
        \sum_{l=0}^\infty  \sum_{m=-l}^l \sum_{k=-l}^l Y_m^l ( e_1 )
    \int_{F_D}   \widehat{f}_k^{\, l} ( P ) \; H_{km}^l (P,s) \, \frac{dx dy d\lambda}{\lambda^3},
\]
where $P=z+\lambda j \in \H^3$.
\end{prop}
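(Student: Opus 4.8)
The plan is to start from the Mellin transform of Definition \ref{Mellinf}, insert the Laplace expansion \eqref{LP10} of $f(P,e_1)$ into the integrand, and then recognize the resulting sum of integrals as the definition \eqref{Hlkm} of $H_{km}^l(P,s)$ after unfolding. Concretely, I would begin with
\[
\mathfrak{M}(f,s) \; = \; \frac{1}{\abs{\Lambda_D}} \int_0^\infty \int_{\R^2/\Lambda_D} f(x+yi+\lambda j, e_1)\, \lambda^{s+1}\, \frac{dxdyd\lambda}{\lambda^3},
\]
and substitute \eqref{LP10} to get $f(P,e_1)=\sum_{l,m}\widehat{f}_m^{\,l}(P)\,Y_m^l(e_1)$. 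The factor $Y_m^l(e_1)$ is a constant (evaluation of the spherical harmonic at $\vartheta=\tfrac\pi2,\varphi=0$) and pulls out of the integral, leaving integrals of $\widehat{f}_m^{\,l}(P)\,\lambda^{s-2}$ over the slab $S_D=\{(z,\lambda):z\in C_D\}$. The integration domain $\R^2/\Lambda_D$ together with $\lambda\in(0,\infty)$ is exactly $S_D$.

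The main work is an \emph{unfolding} step. The integral is over the slab $S_D$, which by \eqref{LS3} decomposes as $\bigcup_{\sigma\in\Gamma'_\infty/\Gamma}\sigma(F_D)$ (up to the index factor $[\Gamma_\infty:\Gamma'_\infty]$). I would rewrite the integral over $S_D$ as a sum over $\sigma\in\Gamma'_\infty\backslash\Gamma$ of integrals over $F_D$, using that the measure $\tfrac{dxdyd\lambda}{\lambda^3}$ is the $\PSL(2,\C)$-invariant hyperbolic volume and hence invariant under each $\sigma$. After the change of variables $P\mapsto\sigma P$ on each piece, the factor $\lambda^{s-2}=(\Im P)^{s-2}$ becomes $(\Im\,\sigma P)^{s-2}$, and one uses $\widehat{f}_m^{\,l}(\sigma P)$. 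Here Lemma \ref{LP} is the key input: it gives
\[
\widehat{f}_m^{\,l}(\sigma P)\; = \;\sum_{k=-l}^l \overline{D_{km}^l\big(R(d\sigma,P)^{-1}\big)}\,\widehat{f}_k^{\,l}(P),
\]
so that summing $\overline{D_{km}^l(R(d\sigma,P)^{-1})}\,(\Im\,\sigma P)^{1+s}$ over $\sigma\in\Gamma'_\infty\backslash\Gamma$ assembles precisely the series $H_{km}^l(P,s)$ of \eqref{Hlkm}. Tracking the exponent carefully, one needs $(\Im P)^{s-2}$ from $\lambda^{s+1}\cdot\lambda^{-3}$ to match $(\Im\,\sigma P)^{1+s}$ against the remaining invariant volume $\tfrac{dxdyd\lambda}{\lambda^3}$ on $F_D$; this bookkeeping is where I would be most careful.

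The step I expect to be the genuine obstacle is justifying the interchange of the infinite Laplace sum $\sum_{l,m}$ with the integral and with the unfolding sum $\sum_\sigma$. For the unfolding itself, the invariance of the volume form and the tiling \eqref{LS3} make it formal, but absolute convergence must be invoked: since $f$ has compact support and $\Re(s)>1$, the classical Eisenstein series $E(P,s)$ converges absolutely on $F_D$ and dominates the $H_{km}^l$ terms, while the Laplace series of a smooth $f$ on each fibre converges rapidly enough (the coefficients $\widehat{f}_k^{\,l}(P)$ decay faster than any polynomial in $l$ by smoothness in the fibre variable) to permit Fubini and term-by-term integration. Once these convergence facts are in hand, reassembling the sum over $l,m,k$ with the pulled-out constants $Y_m^l(e_1)$ and the index factor $[\Gamma_\infty:\Gamma'_\infty]$ yields the claimed identity.
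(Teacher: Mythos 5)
Your proposal is correct and follows essentially the same route as the paper's own proof: insert the Laplace expansion (\ref{LP10}) into the Mellin transform of definition (\ref{Mellinf}), identify the integration domain with the slab $S_D$, unfold it via the tiling (\ref{LS3}) into copies $\sigma(F_D)$, change variables using the invariance of $\tfrac{dxdyd\lambda}{\lambda^3}$, and apply Lemma (\ref{LP}) so that the sum over $\sigma$ assembles into $[\Gamma_{\infty}:\Gamma'_{\infty}]\,H_{km}^l(P,s)$ by (\ref{Hlkm}). Your extra attention to justifying the interchange of the Laplace sum with the integral and the unfolding sum is a point the paper leaves implicit, but otherwise the arguments coincide.
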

\begin{proof}
One has the following chain of equalities:
\begin{align}
\abs{\Lambda_D} \cdot \mathfrak{M}(f,s)  & \; = \int_0^\infty \int_{C_D}   f(P,e_1) \, \Im \, P^{1+s} \,
             \frac{dx dy d\lambda}{\lambda^3}      &&  \text {by definition (\ref{Mellinf})}  \nonumber \\
             & \; = \;  \int_0^\infty  \int_{C_D}
                              \sum_{l=0}^\infty  \sum_{m=-l}^l  \widehat{f}_m^{\, l} (P)   \,  Y_m^l ( e_1 ) \;
                               \Im \, P^{1+s} 
                               \, \frac{dx dy d\lambda}{\lambda^3}
                      && \text {by (\ref{LP10})} \nonumber \\
             & \; = \;  \sum_{l=0}^\infty  \sum_{m=-l}^l  Y_m^l (e_1 )   \int_{S_D}   \widehat{f}_m^{\, l} (P)   \,
                    \Im \, P^{1+s} \, \frac{dx dy d\lambda}{\lambda^3}      && \text {by (\ref{LS2})}  \nonumber \\
             & \; = \;  \sum_{l=0}^\infty  \sum_{m=-l}^l  Y_m^l ( e_1 )     \sum_{\sigma \in  \Gamma'_{\infty} / \Gamma}
              \int_{\sigma(F_D)}   \widehat{f}_m^{\, l} (P)   \,
             \Im \, P^{1+s} \, \cfrac{dx dy d\lambda}{\lambda^3}.      && \text {by (\ref{LS3})}  \label{LS4}
\end{align}

\bigskip
Now we will study the integral given by equation (\ref{LS4}). First consider the change of variables:
\begin{equation*}%\label{LS6}
  P \; = \; \sigma P', \; \; P'=z'+ \lambda' j,
\end{equation*}
where $\sigma \in  \Gamma'_{\infty} / \Gamma$. Since $\sigma: F_D \longrightarrow \sigma(F_D)$ is an isometry it follows that:
\begin{align}
 \int_{\sigma(F_D)}   \widehat{f}_m^{\, l} (P)   \, \Im \, P^{1+s} \, \frac{dx dy d\lambda}{\lambda^3}  & \; = \;
      \int_{F_D}   \widehat{f}_m^{\, l} \big( \sigma P' \big)  \, \Im \, \sigma P'^{1+s} \, \frac{dx' dy' d\lambda'}{\lambda'^3}.      &&  \nonumber
           % & \; = \;
           %      \int_{F_D}   \widehat{f}_m^{\, l} \big( \sigma (z,\lambda) \big)  \, \Im \, \sigma (z,\lambda)^{1+s} \,
           %      \frac{dx dy d\lambda}{\lambda^3}.
           %      && \nonumber % \label{LS7}
\end{align}

\bigskip
By lemma (\ref{LP})
\begin{equation}\label{LS8}
 \int_{\sigma(F_D)}   \widehat{f}_m^{\, l} (P)   \; \Im \, P^{1+s} \, \frac{dx dy d\lambda}{\lambda^3}  \;  = \;
       \int_{F_D}   \sum_{k=-l}^l \overline{ D_{km}^l \big( R(d\sigma,P)^{-1}  \big) } \;
       \widehat{f}_k^{\, l} (P)  \; \Im \, \sigma P^{1+s}
             \, \frac{dx dy d\lambda}{\lambda^3}.
\end{equation}

\bigskip
Substituting the identity (\ref{LS8}) in equation (\ref{LS4}) we obtain the first identity in the following chain of identities:
\begin{align}
\abs{\Lambda_D} \cdot \mathfrak{M}(f,s)   & \; = \;  \sum_{l=0}^\infty  \sum_{m=-l}^l  Y_m^l ( e_1 )
     \sum_{\sigma \in  \Gamma'_{\infty} / \Gamma}       \int_{F_D}  \sum_{k=-l}^l   \overline{ D_{km}^l \big( R(d\sigma,P)^{-1}  \big) }  \;       \widehat{f}_k^{\, l} (P)  \; \Im \, \sigma P^{1+s}  \, \frac{dx dy d\lambda}{\lambda^3}
                 &&  \nonumber \\
             & \; = \;   \sum_{l=0}^\infty  \sum_{m=-l}^l \sum_{k=-l}^l  Y_m^l ( e_1 )    \int_{F_D}   \widehat{f}_k^{\, l} (P) \;
       \Bigg[  \sum_{\sigma \in  \Gamma'_{\infty} / \Gamma} \overline{ D_{km}^l \big( R(d\sigma, P)^{-1}  \big) }  \;
       \Im \, \sigma P^{1+s}  \Bigg]
                 \frac{dx dy d\lambda}{\lambda^3}        &&  \nonumber \\
      & \; = \; \sum_{l=0}^\infty  \sum_{m=-l}^l \sum_{k=-l}^l Y_m^l ( e_1 )
                 \int_{F_D}    \widehat{f}_k^{\, l} (P) \; [\Gamma_{\infty} : \Gamma'_{\infty} ] \; H_{km}^l (P,s)
                  \, \frac{dx dy d\lambda}{\lambda^3}.
                             \hspace{0.9cm}  \text {by (\ref{Hlkm})} && \nonumber
\end{align}
\end{proof}
% =================================

%\bigskip
Using  lemma (\ref{LemasPrevios3}) we know that  $\mathfrak{M}(f,s)$ is well defined for $\Re(s) >1$,  by lemma (\ref{LemasPrevios4})
\[
      \mathfrak{M} (f,s) \; = \;
      \frac{1}{\abs{\Lambda_D}}  \int_{ M_D } f(P, e_1 ) \, E (P,s) \, \frac{dx dy d\lambda}{ \lambda^3}, \; \; \;
      \Re(s) >1.
\]
Hence, we can extend properties of the  Eisenstein series $E (P,s)$ to $\mathfrak{M} (f,s)$, in particular, $\mathfrak{M} (f,s)$  admits a meromorphic continuation to all of  $\C$ with a simple pole at $s=1$.

\bigskip
Let us consider now the function  $\phi(s)$ which appears in the zeroth coefficient of the Fourier series of the classical Eisenstein series $E(P, s)$ associated to $\Gamma_D$ at $\infty$, in other words
\begin{equation}\label{phiCoef0}
  \phi(s) \; = \; \frac{\pi}{s \, \abs{\Lambda_D} }  \cdot  \frac{\zeta_D  (s)}{\zeta_D  (s+1)},
\end{equation}
where $\zeta_D  (s)$ is the Dedekind zeta function associated to the number field $\Q(\sqrt{-D})$.

\bigskip
We know that $ \phi(s)$ admits an analytic or meromorphic continuation  to all of $\C$ (see theorem 1.2, page 232, in \cite{Els98}), and it has a finite number of simple  poles located in the interval $(0,1]$ (theorem 1.11, page 244, in \cite{Els98}). We denote this poles by $s_1,s_2, \ldots, s_p$, except for the pole at $1$. In addition one can assume that
\[
   1 > s_1 > s_2 > \cdots > s_p > 0.
\]

Let $\nu^a$ with $a \in \{1,2, \ldots ,p\}$ be the distributions on $T_1 (M_D)$ defined by $\nu^a  \; := \;  \text{Res} \big(  E(s), s=s_a   \big)$.
Equivalently, if $f \in  C^{\infty}_{0} \big( T_1 (M_D) \big)$ then
\begin{equation}\label{Defnus}
  \nu^a (f) \; := \; \text{Res} \big(  \mathfrak{M} (f,s), s=s_a   \big).
\end{equation}

\bigskip
We will denote by  $\nu$ normalized Liouville measure on $T_1 (M_D)$. That is,
\begin{equation}\label{Defnu}
  \nu (f) \; : = \; \frac{1}{4 \pi \, \text{Vol} ( M_D)}     \int_{T_1 (M_D)}
     f \big( P,  \vec{v}(\vartheta, \varphi)_P \big) \,  \frac{\sin \vartheta \,  dx  dy  d\lambda  d\vartheta  d\varphi}{\lambda^3}, \; \; \; \forall f \in  C^{\infty}_{0} \big( T_1 (M_D) \big),
\end{equation}
with $P=x+yi+\lambda j$. 

\bigskip
 Let $f \in  C^{\infty}_{0} \big( T_1 (M_D) \big)$, we will mimic an idea of Zagier which consists in applying Cauchy's residue theorem to the function
\[
     h(s) \; := \; \mathfrak{M} (f,s) \, \lambda^{1-s},
\]
in the infinite strip $\{ \, s \in \C \, ; \, 0 \leq \Re(s) \leq 3 \, \}$. In order to do this we first consider the region $\Omega_h$ with $h >0$ which is the region in $\C$ with boundary the rectangle with vertices $(0,h)$, $(0,-h)$, $(3,-h)$ and $(3,h)$. Let $\alpha_h$ denote the line segment  joining $(3,h)$ with $(0,h)$, let $\beta_h$ be the line segment joining $(0,-h)$ to $(3,-h)$.

\bigskip
The meromorphic continuation of $ \mathfrak{M} (f,s)$ implies the meromorphic continuation of $h(s)$ to all of $\C$. The poles of $h(s)$ are the same as the poles of $ \mathfrak{M} (f,s)$. By lemma (\ref{LemasPrevios4}) the poles of $ \mathfrak{M} (f,s)$ are the same as the poles of the Eisenstein series $E(P,s)$, which in turn correspond to the poles of $\phi(s)$. Summarizing, the poles of $h(s)$ are $s_1,s_2, \ldots,s_p,1 \in (0,1]$.
By the residue theorem,
\begin{equation}\label{TeoFinal4}
\int_{\Omega_h}  \mathfrak{M} (f,s) \, \lambda^{1-s} \, ds  \; = \;
     2 \pi i \, Res \big( \mathfrak{M} (f,s) \, \lambda^{1-s}, s=1 \big) \, + \,
     2 \pi i \, \sum_{a=1}^p  \, Res \big( \mathfrak{M} (f,s) \, \lambda^{1-s}, s=s_a \big).
\end{equation}

\bigskip
By the Paley-Wiener theorem,
\begin{equation}\label{TeoFinal5}
  \lim_{h \rightarrow \infty} \int_{\alpha_h}   \mathfrak{M} (f,s) \, \lambda^{1-s} \, ds \; = \;
     \lim_{h \rightarrow \infty} \int_{\beta_h}   \mathfrak{M} (f,s) \, \lambda^{1-s} \, ds \; = \; 0.
\end{equation}

\bigskip
Taking the limit as $h \rightarrow \infty$ in (\ref{TeoFinal4}) and using the identities in (\ref{TeoFinal5}) we
conclude:
\[
\int_{3-i \infty}^{3+i \infty} \mathfrak{M} (f,s) \, \lambda^{1-s} \, ds \, + \,
\int_{i \infty}^{-i \infty} \mathfrak{M} (f,s) \, \lambda^{1-s} \, ds \; = \; \hspace{7cm}
\]
\vspace{-0.2cm}
\begin{equation}\label{TeoFinal6}
  \hspace{3.2cm}   2 \pi i \, Res \big( \mathfrak{M} (f,s) \, \lambda^{1-s}, s=1 \big) \, + \,
     2 \pi i \, \sum_{a=1}^p  \, Res \big( \mathfrak{M} (f,s) \, \lambda^{1-s}, s=s_a \big).
\end{equation}

%\bigskip
Lemma (\ref{LemasPrevios7}) implies:
\begin{equation}\label{TeoFinal7}
  \text{Res} \,  \big( \mathfrak{M} (f,s) \, \lambda^{1-s}, s=1 \big)  \; = \;   \nu (f).
\end{equation}

\bigskip
From lemma (\ref{LemasPrevios8}) we obtain the equation:
\begin{equation}\label{TeoFinal8}
  \nu (\lambda) (f)  \; = \; \frac{1}{2\pi i} \, \int_{3-i \infty}^{3+i \infty} \mathfrak{M} (f,s) \, \lambda^{1-s} \, ds.
\end{equation}

%\bigskip
Lemma (\ref{LemasPrevios11}) implies that:
\begin{equation}\label{TeoFinal9}
  \frac{1}{2\pi i} \int_{-i \infty}^{+i \infty} \mathfrak{M} (f,s) \, \lambda^{1-s} \, ds  \; = \; o ( \lambda )  \;\; \; \; \; \text{when}\,
\lambda \rightarrow 0.
\end{equation}

%\bigskip
Let $a \in \{0,1,\ldots, p\}$, from definition  (\ref{Defnus}) have:
\begin{equation}\label{TeoFinal10}
      \displaystyle  Res \big( \mathfrak{M} (f,s) \, \lambda^{1-s}, s= s_a \big)  \; = \;  \lambda^{1-s_a} \, \nu^a (f).
\end{equation}

\bigskip
Substituting (\ref{TeoFinal7}), (\ref{TeoFinal8}), (\ref{TeoFinal9}) and  (\ref{TeoFinal10}) in (\ref{TeoFinal6})  we obtain
\[
      \displaystyle 2 \pi i \, \nu (\lambda) (f) - 2 \pi i \, o(\lambda) \; = \;   2 \pi i  \,  \nu (f)
        +  2 \pi i \big( \lambda^{1-s_1} \, \nu^1 (f)  +  \cdots  +  \lambda^{1-s_p} \, \nu^p (f) \big).
\]
%when $\lambda \rightarrow 0$.

\bigskip
Summarizing, one has proved the following theorem:
\begin{tio}
Let $D \in \{ 1,2,3,7,11,19,43,67,163 \}$ and $ M_D$ be the corresponding Bianchi orbifolds.
There exist numbers $1>s_1 > s_2 > \cdots > s_p > 0$ ($p$ could be zero in which case such numbers do not appear) and corresponding distributions  $\nu^1 , \nu^2 , \cdots , \nu^p $ in $T_1 (M_D)$  such that if $f \in C^{\infty}_{0} \big( T_1 (M_D) \big)$ then:
\[
      \nu (\lambda)(f)  \; = \;   \nu (f) \, + \, \lambda^{1-s_1} \, \nu^1 (f)  +   \cdots +  \lambda^{1-s_p} \, \nu^p  (f) +
          \, o(\lambda)  \; \; \; \; \; (\lambda \longrightarrow 0),
\]
where $\nu$ is normalized Liouville measure on $T_1 (M_D)$.
\end{tio}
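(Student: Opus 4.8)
The plan is to follow the Zagier--Sarnak strategy: package the whole family $\{\nu(\lambda)\}$ into its Mellin transform $\mathfrak{M}(f,s)$ (Definition \ref{Mellinf}), establish the meromorphic continuation and pole structure of $\mathfrak{M}(f,\cdot)$, and then read off the small-$\lambda$ asymptotics of $\nu(\lambda)(f)$ by Mellin inversion together with a contour shift that collects residues. The first and most substantial step is to show that, after analytic continuation, $\mathfrak{M}(f,s)$ is governed by a \emph{single scalar} Eisenstein series. Proposition \ref{PImp} expands $\mathfrak{M}(f,s)$ for $\Re(s)>1$ as a triple sum over $(l,m,k)$ of fibre-integrals of $\widehat{f}_k^{\,l}(P)\,H_{km}^l(P,s)$ weighted by $Y_m^l(e_1)$, and the relation (\ref{relacionHE}) rewrites each $H_{km}^l$ as $e^{-i(k+m)\pi}E_{km}^l$. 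By Theorem \ref{conje} every series with $l>0$ continues holomorphically to $\Re(s)\geq 1$, while the lone $l=k=m=0$ term equals $\tfrac{1}{[\Gamma_\infty:\Gamma'_\infty]}E(P,s)$ and carries a simple pole at $s=1$. Exploiting the spherical-harmonic reconstruction (the inversion of (\ref{LP7}) together with lemma \ref{LP}), I would collapse the triple sum into the clean identity $\mathfrak{M}(f,s)=\tfrac{1}{|\Lambda_D|}\int_{M_D}f(P,e_1)\,E(P,s)\,\tfrac{dx\,dy\,d\lambda}{\lambda^3}$ for $\Re(s)>1$. This transports the full meromorphic continuation and pole structure of $E(P,s)$ — equivalently of the constant-term function $\phi(s)$ in (\ref{phiCoef0}) — onto $\mathfrak{M}(f,s)$, whose only poles are then $s=1$ and the finitely many $1>s_1>\cdots>s_p>0$.

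With this reduction in place I would invert the Mellin transform: since $\Re(s)=3$ lies in the half-plane of absolute convergence, $\nu(\lambda)(f)=\tfrac{1}{2\pi i}\int_{3-i\infty}^{3+i\infty}\mathfrak{M}(f,s)\,\lambda^{1-s}\,ds$. The core of the argument is a contour shift applied to $h(s):=\mathfrak{M}(f,s)\,\lambda^{1-s}$: by Cauchy's residue theorem on the rectangle with vertical sides $\Re(s)=3$ and $\Re(s)=0$ and horizontal sides at heights $\pm h$, the contour integral equals $2\pi i$ times the sum of the residues of $h$ at the enclosed poles $s=1,s_1,\ldots,s_p$.

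Next I would evaluate those residues. For each $a\in\{1,\ldots,p\}$ the residue at $s=s_a$ is $\lambda^{1-s_a}\,\mathrm{Res}(\mathfrak{M}(f,s),s=s_a)=\lambda^{1-s_a}\,\nu^a(f)$, which is exactly how the distributions $\nu^a$ are defined in (\ref{Defnus}). The subtler computation is the residue at the simple pole $s=1$: unfolding the residue of $E(P,s)$ against $f(P,e_1)$ and integrating the fibrewise expansion (\ref{LP7}) over $\S^2$, one must recover precisely the normalized Liouville measure of (\ref{Defnu}), including the constant $\tfrac{1}{4\pi\,\mathrm{Vol}(M_D)}$, so that $\mathrm{Res}(h,s=1)=\nu(f)$.

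Finally I would let $h\to\infty$ and control the leftover pieces. Because $f$ is smooth with compact support, $\mathfrak{M}(f,s)$ decays rapidly along vertical lines (Paley--Wiener), so the two horizontal segments contribute nothing in the limit. The remaining integral over $\Re(s)=0$ carries the factor $\lambda^{1-s}$ of modulus $\lambda$, and I would show $\tfrac{1}{2\pi i}\int_{-i\infty}^{+i\infty}\mathfrak{M}(f,s)\,\lambda^{1-s}\,ds=o(\lambda)$ as $\lambda\to 0$ by a Riemann--Lebesgue argument using that same vertical decay. Combining the residue identity with these vanishing and error estimates yields $\nu(\lambda)(f)=\nu(f)+\lambda^{1-s_1}\nu^1(f)+\cdots+\lambda^{1-s_p}\nu^p(f)+o(\lambda)$. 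I expect the main obstacle to be the reduction in the first step — the collapse of the $(l,m,k)$-sum onto a single scalar Eisenstein series, since it is precisely this that prevents spurious poles from the $l>0$ terms from entering the strip $0<\Re(s)\leq 1$ and guarantees that the pole set is exactly $\{1,s_1,\ldots,s_p\}$; alongside it, pinning the $s=1$ residue to normalized Liouville measure and securing the $o(\lambda)$ bound on the critical line $\Re(s)=0$ are the remaining delicate analytic points, both resting on quantitative vertical decay of $\mathfrak{M}(f,\cdot)$ rather than on mere meromorphic continuation.
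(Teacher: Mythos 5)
Your global architecture is exactly the paper's: Mellin transform (Definition \ref{Mellinf}), inversion along $\Re(s)=3$, a contour shift collecting residues at $s=1,s_1,\ldots,s_p$, Paley--Wiener on the horizontal edges, Riemann--Lebesgue on $\Re(s)=0$, and the definitions (\ref{Defnus}), (\ref{Defnu}) of $\nu^a$ and $\nu$. The genuine gap is in the step you yourself single out as the crux. Collapsing the triple sum of Proposition \ref{PImp} via Lemma \ref{LP} and (\ref{LP10}) does not yield your ``clean identity'': substituting (\ref{Hlkm}) and summing first over $k$ and then over $(l,m)$ reconstructs $f(\sigma P,e_1)$, so you obtain $\mathfrak{M}(f,s)=\frac{1}{\abs{\Lambda_D}}\int_{F_D}\sum_{\sigma\in\Gamma'_\infty\backslash\Gamma}f(\sigma P,e_1)\,\Im(\sigma P)^{1+s}\,\lambda^{-3}\,dx\,dy\,d\lambda$, which unfolds straight back to the definition of $\mathfrak{M}(f,s)$; the manoeuvre is circular, being literally the proof of Proposition \ref{PImp} run backwards. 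To pass from this to $\frac{1}{\abs{\Lambda_D}}\int_{F_D}f(P,e_1)\,E(P,s)\,\lambda^{-3}\,dx\,dy\,d\lambda$ you would need $f(\sigma P,e_1)=f(P,e_1)$, i.e.\ $\Gamma$-invariance of the restriction of $f$ to the $e_1$-section; this fails because the section $P\mapsto(P,e_1)$ is not $\Gamma$-equivariant ($d\sigma_P$ rotates $e_1$ by $R(d\sigma,P)$), and that non-equivariance is the very reason the Wigner-matrix machinery exists. In the paper the scalar identity is not deduced from Proposition \ref{PImp} at all: it is a separate statement, Lemma \ref{LemasPrevios4}, proved by its own unfolding argument, and it is that lemma --- together with Lemmas \ref{LemasPrevios6} and \ref{LemasPrevios10}, which bound $\mathfrak{M}(f,it)$ by norms of truncated Eisenstein series using $\Delta^2E=(s^2-1)^2E$ and Cauchy--Schwarz --- that delivers the continuation, the location of all poles in the strip, and the integrability on $\Re(s)=0$. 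Your route leaves the strip $0<\Re(s)<1$ uncontrolled, since Theorem \ref{conje} gives analyticity of the $l>0$ series only for $\Re(s)\geq1$, as you yourself observe but without a valid substitute.

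The same conflation would also corrupt your main term. Taken at face value, the scalar identity forces $\mathrm{Res}\big(\mathfrak{M}(f,s),s=1\big)=\frac{1}{\mathrm{Vol}(M_D)}\int_{F_D}f(P,e_1)\,\lambda^{-3}\,dx\,dy\,d\lambda$, an integral of $f$ over the section only; no $\S^2$-average appears spontaneously, and this is not $\nu(f)$. The paper's Lemma \ref{LemasPrevios7} obtains $\nu(f)$ from the \emph{other} identity: it takes residues term by term in Proposition \ref{PImp}, uses (\ref{relacionHE}) and Theorem \ref{conje} to annihilate every $l>0$ term at $s=1$, and is left with the single $l=k=m=0$ term, whose coefficient $Y_0^0(e_1)\,\widehat{f}_0^{\,0}(P)$ is precisely the fiber average $\frac{1}{4\pi}\int_{\S^2}f\,d\Omega$ --- that is where the constant $\frac{1}{4\pi\,\mathrm{Vol}(M_D)}$ in (\ref{Defnu}) comes from. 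So the two representations of $\mathfrak{M}(f,s)$ play disjoint roles in the paper (Proposition \ref{PImp} for the residue at $s=1$; Lemma \ref{LemasPrevios4} for continuation, pole location and vertical decay), and a correct write-up must establish and use both independently rather than derive one from the other.
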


\bigskip
\subsection{Lemmas}
\begin{lemm}\label{LemasPrevios3}
Let $f \in C_{0}^0 \big( T_1 (M_D) \big)$, $s \in \C$,  $\Re(s) > 1$,  the function $\langle E(s), \cdot \rangle : C_{0}^0 \big( T_1 (M_D) \big) \longrightarrow  \C$ given by
\[
  \langle E(s), f \rangle \; = \; \mathfrak{M} (f,s)
\]
is a measure on $T_1 (M_D)$. 
\end{lemm}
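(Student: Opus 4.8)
The plan is to exhibit $f \mapsto \mathfrak{M}(f,s)$ as a continuous linear functional on $C^0_0\big(T_1(M_D)\big)$ and then invoke the Riesz--Markov representation theorem. Linearity in $f$ is immediate from Definition (\ref{Mellinf}). The content lies in the absolute convergence of the defining integral for $\Re(s)>1$ and in a bound of the form $|\mathfrak{M}(f,s)|\le C_{s,K}\,\|f\|_\infty$ valid uniformly over all $f$ supported in a fixed compact set $K$, which is exactly the continuity condition needed for Riesz--Markov.

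By Definition (\ref{Mellinf}),
\[
\mathfrak{M}(f,s)=\frac{1}{|\Lambda_D|}\int_0^\infty\int_{\R^2/\Lambda_D} f\big(x+iy+\lambda j,\,e_1\big)\,\lambda^{s-2}\,dx\,dy\,d\lambda ,
\]
so the functional is integration of $f$ against the push-forward to $T_1(M_D)$, under $(z,\lambda)\mapsto(z+\lambda j,e_1)$ with $z\in C_D$, of the measure $\tfrac{1}{|\Lambda_D|}\lambda^{s-2}\,dx\,dy\,d\lambda$ on the strip $S_D$ of (\ref{LS2}). It therefore suffices to check that this push-forward is locally finite, equivalently that the displayed integral converges absolutely and depends continuously on $f$.

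First I would use the single cusp at $\infty$ to control the region $\lambda\to\infty$. Given a compact set $K\subset T_1(M_D)$, the portion $\{(z+\lambda j,e_1):z\in C_D,\ \lambda>R\}$ of the strip maps, modulo the finite quotient $\Gamma_\infty/\Gamma'_\infty$, into a standard horoball neighbourhood of the cusp, which for $R$ large is disjoint from $K$. Hence there is $R_K$ such that every $f$ supported in $K$ satisfies $f(z+\lambda j,e_1)=0$ whenever $z\in C_D$ and $\lambda>R_K$. Using $|f|\le\|f\|_\infty$ and $\int_{\R^2/\Lambda_D}dx\,dy=|\Lambda_D|$ this gives
\[
|\mathfrak{M}(f,s)|\le \|f\|_\infty\int_0^{R_K}\lambda^{\Re(s)-2}\,d\lambda=\frac{R_K^{\,\Re(s)-1}}{\Re(s)-1}\,\|f\|_\infty ,
\]
where the integral converges precisely because $\Re(s)>1$ makes the exponent $\Re(s)-2>-1$ integrable at $\lambda=0$. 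This simultaneously proves absolute convergence and the required uniform bound, so $\mathfrak{M}(\cdot,s)$ is continuous in the inductive-limit topology on $C^0_0\big(T_1(M_D)\big)$.

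The Riesz--Markov theorem then produces the representing measure on $T_1(M_D)$. For real $s>1$ the functional is positive (as $f\ge 0$ forces $\nu(\lambda)(f)\ge 0$), giving a positive Radon measure; for non-real $s$ the factor $\lambda^{s-2}$ is complex and one obtains a complex Radon measure whose total variation is dominated by the positive measure built from $\lambda^{\Re(s)-2}$, hence still locally finite by the same estimate. The main obstacle is the uniform cusp cutoff $R_K$: one must verify that the high part of the strip $S_D$ indeed projects into the cusp region and is avoided by any fixed compact set, while noting that the hypothesis $\Re(s)>1$ is exactly what tames the integrand at $\lambda=0$, where the strip meets the thick part of $M_D$ through the $\Gamma$-action.
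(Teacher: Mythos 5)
Your proposal is correct and takes essentially the same route as the paper's proof: both use the compact support of $f$ to cut the $\lambda$-integration off at a finite height, bound $|f|$ by its sup-norm, and observe that $\Re(s)>1$ makes $\int_0^{C}\lambda^{\Re(s)-2}\,d\lambda$ finite, yielding the estimate $|\mathfrak{M}(f,s)|\le \|f\|_\infty\,C^{\Re(s)-1}/(\Re(s)-1)$. Your explicit appeal to Riesz--Markov and the cusp-geometry justification of the cutoff merely make precise what the paper (following Sarnak) leaves implicit.
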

\begin{proof}
The proof is taken from Sarnak in \cite{Sar80}, page 724. Let $f \in C_{0}^0 \big( T_1 (M_D) \big)$,  $f:T_1 (\H^3) \longrightarrow \C$ which is $\Gamma^\ast_D$ invariant, $P=z+\lambda j$.  Define
 %Existe una constante $M > 0$ tal que
\begin{equation*}%\label{CONSS1}
  \norm{f}_\infty \; := \; \sup_{(P, \vec{v}(\vartheta, \varphi)_P ) \in T_1 (M_D)}
  \left\vert f  \big(P, \vec{v}(\vartheta, \varphi)_P \big)  \right\vert < \infty.
\end{equation*}

%\bigskip
Let  $C \in \R$ be a sufficiently big positive constant such that
\begin{equation*}%\label{CONSS2}
  \text{support} (f)  \subset \big \{ (P,  \vec{v}(\vartheta, \varphi)_P) \, ; \, z \in \C \; , \lambda \leq C \big \} \subset \H^3.
\end{equation*}

Then, if $\eta= \Re (s) > 1$,
\begin{align}
 \left\vert \mathfrak{M} (f,s)  \right\vert   & \; \leq \;
  \frac{1}{\abs{\Lambda_D}}  \int_0^\infty \int_{\R^2 / \Lambda_D} \left\vert  f ( P, e_1 ) \, \lambda^{s+1}  \right\vert \frac{dxdyd\lambda}{\lambda^3}   && \nonumber \\
           &  \; \leq \;
  \frac{1}{\abs{\Lambda_D}}  \int_0^\infty \int_{\R^2 / \Lambda_D}  \norm{f}_\infty \cdot  \lambda^{\Re(s)+1}
                    \frac{dxdyd\lambda}{\lambda^3}          &&  \nonumber \\
             & \; = \;
  \frac{\norm{f}_\infty}{\abs{\Lambda_D}}  \int_0^\infty \lambda^{\eta+1} \Bigg[ \int_{\R^2 / \Lambda_D}  dxdy \Bigg]
                    \frac{d\lambda}{\lambda^3}    \; = \;  \norm{f}_\infty   \int_0^\infty \lambda^{\eta-2}     d\lambda        &&  \nonumber \\
             & \; \leq \;
  \norm{f}_\infty   \int_0^C \lambda^{\eta-2}     d\lambda    \; = \;  \norm{f}_\infty   \frac{C^{\eta-1}}{\eta-1}  \; < \; \infty.
       &&  \nonumber
\end{align}
\end{proof}

% =============================

\begin{lemm}\label{LemasPrevios4}
Let $f \in C_{0}^{\infty} \big( T_1 (M_D) \big)$,  $s\in \C$, $\Re(s)>1$. Then \begin{equation}
       \mathfrak{M} (f,s) \; = \;
      \frac{1}{\abs{\Lambda_D}}  \int_{ M_D } f(P, e_1 ) \, E (P,s) \, \frac{dx dy d\lambda}{ \lambda^3}, \; \; \; P=z+\lambda j.
\end{equation}
\end{lemm}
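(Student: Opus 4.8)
The plan is to prove the identity by the classical \emph{unfolding} of the Eisenstein series, in the spirit of Zagier and Sarnak, starting from the explicit form of the Mellin transform. Recall from Definition \ref{Mellinf} that for $\Re(s)>1$,
\[
\mathfrak{M}(f,s) = \frac{1}{\abs{\Lambda_D}}\int_0^\infty\int_{\R^2/\Lambda_D} f(x+yi+\lambda j, e_1)\,\lambda^{s+1}\,\frac{dx\,dy\,d\lambda}{\lambda^3}.
\]
Since $\Im(z+\lambda j)=\lambda$ and, by \eqref{LS2}, the region $\{z\in C_D,\ \lambda>0\}$ is exactly the chimney $S_D$, a fundamental domain for the maximal unipotent group $\Gamma'_\infty$ acting on $\H^3$, this rewrites as $\abs{\Lambda_D}\,\mathfrak{M}(f,s)=\int_{S_D} f(P,e_1)\,(\Im P)^{1+s}\,\tfrac{dx\,dy\,d\lambda}{\lambda^3}$. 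The goal is to convert this integral over the $\Gamma'_\infty$-fundamental domain into an integral over a $\Gamma$-fundamental domain $F_D$ (that is, over $M_D$) weighted by the scalar Eisenstein series $E(P,s)=\sum_{\gamma\in\Gamma'_\infty\backslash\Gamma}(\Im\gamma P)^{1+s}$, which is the $l=k=m=0$ member of our family, normalized as in Theorem \ref{conje} by $E=[\Gamma_{\infty}:\Gamma'_{\infty}]\,E^0_{00}$.

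First I would tile the chimney by $\Gamma$-translates of $F_D$ using \eqref{LS3}, namely $S_D=\bigcup_{\gamma\in\Gamma'_\infty\backslash\Gamma}\gamma(F_D)$, and split the integral accordingly. On each tile I change variables $P=\gamma P'$ with $P'\in F_D$; since the hyperbolic volume form is $\Gamma$-invariant and the factors $(\Im\gamma P')^{1+s}$ are exactly the summands of $E$, the family of these substitutions is precisely what assembles $\sum_\gamma(\Im\gamma P')^{1+s}=E(P',s)$ in front of $F_D$. The one point that is \emph{not} formal is that $P\mapsto f(P,e_1)$ is not $\Gamma$-invariant: under $\gamma$ the Euclidean frame $e_1$ is rotated by $R(d\gamma,P')\in\SO(3)$ according to \eqref{Rota9} and Lemma \ref{Series9}. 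Here I would invoke exactly the bookkeeping already carried out in Proposition \ref{PImp}: expanding $f(P,\vec{v}(\cdot,\cdot)_P)$ fibrewise in spherical harmonics \eqref{LP7}, the transformation law of the coefficients $\widehat{f}^{\,l}_m$ under $\Gamma$ (Lemma \ref{LP}) together with the Wigner composition identity \eqref{WI10} shows that the frame rotations reorganize the unfolded sum into the series $H^l_{km}(P,s)$, and the relation \eqref{relacionHE}, $H^l_{km}=e^{-i(k+m)\pi}E^l_{km}$, identifies these with the Eisenstein family, the scalar term $E^0_{00}$ being the one that carries the factor $E(P,s)$.

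The main obstacle is therefore twofold. The routine part is the justification of the interchanges of summation and integration: for $\Re(s)>1$ this follows from absolute convergence, bounding $f$ by $\norm{f}_\infty$ as in Lemma \ref{LemasPrevios3} and using the convergence of the scalar Eisenstein series (\cite{Els98}, Chapter 3, or \cite{Lok04}). The genuinely delicate part is controlling the frame rotation cleanly enough that the unfolded integrand over $F_D$ is honestly $f(P,e_1)\,E(P,s)$; this is the step where the representation theory of $\SO(3)$, through the identities \eqref{SumatoriaWigner} and \eqref{WI10}, does the real work, and it is the reason the spherical-harmonic decomposition of Proposition \ref{PImp} was set up beforehand. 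Once the identity is established for $\Re(s)>1$, the meromorphic continuation of $\mathfrak{M}(f,s)$, together with its simple pole at $s=1$, is inherited directly from the corresponding properties of $E(P,s)$ via Theorem \ref{conje}, which is exactly the use made of this lemma in the sequel.
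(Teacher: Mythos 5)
Your outer skeleton is the same computation as the paper's proof, run in the opposite direction: the paper starts from $\int f(P,e_1)\,E(P,s)$, expands $E(P,s)=\sum_{\sigma\in\Gamma'_\infty\backslash\Gamma}\Im\,\sigma P^{1+s}$, inserts the invariance $f(P,e_1)=f(\sigma P,\,d\sigma_P e_1)$, unfolds the resulting sum--integral from $\H^3/\Gamma$ to $\H^3/\Gamma'_\infty$, and recognizes Definition (\ref{Mellinf}); you run the tiling (\ref{LS3}) and the change of variables $P=\sigma P'$ the other way. But the two arguments part ways at exactly the step you flag as the crux. The paper's proof handles the frame in one line --- the identity $f(P,e_1)=f(\sigma P, d\sigma_P e_1)$ followed by the unfolding along the transported vector --- and makes no use whatsoever of spherical harmonics, Wigner matrices, Proposition (\ref{PImp}), or the series $H^l_{km}$.

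Your proposed repair of that step is where the genuine gap lies: ``invoking the bookkeeping of Proposition (\ref{PImp})'' does not yield the lemma. If you expand $f$ fibrewise and apply Lemma (\ref{LP}) and (\ref{WI10}) after the change of variables, what you obtain is precisely Proposition (\ref{PImp}),
\[
\mathfrak{M}(f,s)\;=\;\frac{[\Gamma_\infty:\Gamma'_\infty]}{\abs{\Lambda_D}}\sum_{l=0}^{\infty}\sum_{m=-l}^{l}\sum_{k=-l}^{l}Y^l_m(e_1)\int_{F_D}\widehat{f}^{\,l}_k(P)\,H^l_{km}(P,s)\,\frac{dx\,dy\,d\lambda}{\lambda^{3}},
\]
a sum over all $(l,k,m)$ in which only the single term $l=k=m=0$ involves $E(P,s)$, and that term is paired with the fibre average $Y^0_0\,\widehat{f}^{\,0}_0(P)$, not with $f(P,e_1)=\sum_{l,m}\widehat{f}^{\,l}_m(P)\,Y^l_m(e_1)$. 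To reach the asserted identity you would still need the entire $l>0$ block to reassemble into $\frac{1}{\abs{\Lambda_D}}\int_{F_D}\bigl(\sum_{l>0}\sum_{m}\widehat{f}^{\,l}_m(P)Y^l_m(e_1)\bigr)E(P,s)\,\frac{dx\,dy\,d\lambda}{\lambda^3}$, and nothing in your proposal does this; it is not a formal consequence of (\ref{relacionHE}), which relates $H^l_{km}$ to $E^l_{km}$ with the \emph{same} indices, never to $E^0_{00}$. Moreover, no purely formal rearrangement of Proposition (\ref{PImp}) can produce the lemma's right-hand side: at $s=1$ the residue of the expression above is the Liouville average $\nu(f)$ (that is Lemma (\ref{LemasPrevios7})), whereas the residue of $\frac{1}{\abs{\Lambda_D}}\int_{F_D}f(P,e_1)E(P,s)\,\frac{dx\,dy\,d\lambda}{\lambda^3}$ is $\frac{1}{\mathrm{Vol}(M_D)}\int_{F_D}f(P,e_1)\,\frac{dx\,dy\,d\lambda}{\lambda^3}$, and these are different functionals of $f$ (the first averages over every fibre $\S^2$, the second sees only the values of $f$ on the $e_1$-section). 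So, carried out honestly, your argument re-proves a proposition the paper already has and stops exactly where the lemma begins; the content of the lemma is the paper's direct unfolding along the moving frame, which your proposal replaces by a tool that yields a different formula.
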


\begin{proof}
One has the following identities
\begin{align}
\int_{ \H^3 / \Gamma_D    } f(P, e_1 ) \, E (P,s) \, \frac{dx dy d\lambda}{\lambda^3}  & \; = \;
\int_{ \H^3 / \Gamma    } f(P, e_1 )
\sum_{\sigma \in  \Gamma'_{\infty} \backslash \Gamma} \, \Im  \,  \sigma P^{1+s}  \, \frac{dx dy d\lambda}{\lambda^3}  &&  \nonumber \\
             & \; = \;
\int_{ \H^3 / \Gamma    } \sum_{\sigma \in  \Gamma'_{\infty} \backslash \Gamma}
f \big( \sigma P, d \sigma_P e_1 \big) \,
  \Im \,   \sigma P^{1+s}  \, \frac{dx dy d\lambda}{\lambda^3}  &&  \nonumber \\
             & \; = \;
\int_{  \H^3 / \Gamma'_{\infty}  }  f ( P, e_1 ) \,
  \lambda^{1+s}  \, \frac{dx dy d\lambda}{\lambda^3}  &&  \nonumber \\
          & \; = \;
\int_0^\infty  \Bigg[ \int_{\R^2 / \Lambda_D} f ( P, e_1 )  \, dx dy \, \Bigg]
  \lambda^{s-2}  \, d \lambda  &&  \nonumber \\
             & \; = \;
\int_0^\infty  \nu (\lambda)(f) \,  \lambda^{s-2}  \, d \lambda 
       \; = \; \abs{\Lambda_D} \cdot \mathfrak{M} (f,s). &&  \nonumber
\end{align}
\end{proof}

% =========================================

\begin{lemm}\label{LemasPrevios6}
Let $f \in C_{0}^{\infty} \big( T_1 (M_D) \big)$,  $s\in \C$, $s \neq \pm 1$ and $s$ which is not a pole of $E(P,s)$. The following inequality holds:
\[
       \abs {\mathfrak{M} (f,s) } \; \leq \; \frac{ 1 }{  \abs{s^2-1 }^2 \cdot \abs{\Lambda_D}  } \, \norm{ \Delta^2 f }^2 \cdot \norm{ E^T ( \cdot, s ) }^2,
\]
where $T=T(f)$ is a sufficiently large constant. The truncated Eisenstein series $ E^T ( \cdot,  s )$ (see \cite{Els98}, page 270) are defined as follows:
\[
 E^T ( P , s ) \; := \; E( P , s) - \alpha (T, z,\lambda  , s),
 \]
with $P=z+\lambda j \in \H^3$, and
 \[
    \alpha (T, z,\lambda  , s) \; := \;
    \begin{cases}
      \lambda^{1+s} + \phi(s) \lambda^{1-s}  & \text{si} \; \lambda  \geq T \\
      0  & \text{si} \;  \lambda < T.
    \end{cases}
  \]
The function $\phi(s)$ is the one that appears in (\ref{phiCoef0}).
\end{lemm}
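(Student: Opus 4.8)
The plan is to reduce the bound to a self-adjointness argument for the Eisenstein series, in the spirit of Sarnak \cite{Sar80}. By lemma (\ref{LemasPrevios4}) one has, for $\Re(s)>1$,
\[
\mathfrak{M}(f,s) \; = \; \frac{1}{\abs{\Lambda_D}}\int_{M_D} f(P,e_1)\, E(P,s)\,\frac{dx\,dy\,d\lambda}{\lambda^3}, \qquad P=z+\lambda j,
\]
and this identity persists by meromorphic continuation for every admissible $s$. Thus everything reduces to estimating the pairing of the function $P\mapsto f(P,e_1)$ against $E(\cdot,s)$ over a fundamental domain for $M_D$.

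The first ingredient I would record is that $E(\cdot,s)$ is an eigenfunction of the hyperbolic Laplacian. Writing $\Delta=\lambda^2(\partial_x^2+\partial_y^2+\partial_\lambda^2)-\lambda\,\partial_\lambda$, a direct computation gives $\Delta\,\lambda^{1+s}=(s^2-1)\,\lambda^{1+s}$; since each summand $\Im(\sigma P)^{1+s}$ in the definition of $E(P,s)$ is $\lambda^{1+s}$ precomposed with an isometry, it follows that $\Delta E(\cdot,s)=(s^2-1)\,E(\cdot,s)$, whence $\Delta^2 E(\cdot,s)=(s^2-1)^2\,E(\cdot,s)$ wherever $E(\cdot,s)$ is defined. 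Because $s\neq\pm 1$ and $s$ is not a pole of $E$, I may solve for $E$ and write $E(\cdot,s)=(s^2-1)^{-2}\,\Delta^2 E(\cdot,s)$ on the support of $f$; this is precisely what produces the factor $\abs{s^2-1}^{2}$ in the denominator.

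Next comes the truncation together with the integration by parts. Since $f$ is compactly supported, I would choose $T=T(f)$ large enough that the support of $P\mapsto f(P,e_1)$ lies in the region $\{\lambda<T\}$; there $\alpha(T,z,\lambda,s)=0$, so $E(P,s)=E^T(P,s)$ on the support of $f$ and of all its derivatives. Substituting the eigenfunction identity into the integral and integrating by parts twice transfers the two copies of $\Delta$ from $E$ onto $f$, the boundary contributions vanishing because $\Delta$ is self-adjoint for the volume form $\lambda^{-3}\,dx\,dy\,d\lambda$ and $f$ has compact support (so nothing reaches the cusp); this yields
\[
\int_{M_D} f(P,e_1)\, E(P,s)\,\frac{dx\,dy\,d\lambda}{\lambda^3} \; = \; \frac{1}{(s^2-1)^2}\int_{M_D}\big(\Delta^2 f\big)(P,e_1)\, E(P,s)\,\frac{dx\,dy\,d\lambda}{\lambda^3}.
\]
As $\Delta^2 f$ is again supported where $\lambda<T$, I replace $E$ by $E^T$ in the last integral at no cost, obtaining a genuinely square-integrable partner $E^T(\cdot,s)\in L^2(M_D)$.

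Finally I apply the Cauchy--Schwarz inequality in $L^2(M_D)$ to the pair $\Delta^2 f$ and $E^T(\cdot,s)$ and collect the prefactors $\abs{\Lambda_D}^{-1}$ and $\abs{s^2-1}^{-2}$, which gives the stated bound (both norms understood in $L^2(M_D)$). The delicate point is not any single computation but the bookkeeping of the truncation: one must verify that the chosen $T$ simultaneously makes $E$ agree with $E^T$ throughout the support of $f$ and of $\Delta^2 f$, so that the replacement is exact, while $E^T$ decays at the cusp and is therefore in $L^2$, so that Cauchy--Schwarz is legitimate. The self-adjointness of $\Delta$ and the vanishing of all boundary terms, both consequences of the compact support of $f$, are what make the double integration by parts rigorous.
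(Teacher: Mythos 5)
Your proposal is correct and follows essentially the same route as the paper's proof: reduce via Lemma (\ref{LemasPrevios4}) to the pairing of $f(\cdot,e_1)$ with $E(\cdot,s)$ over $M_D$, use the eigenvalue equation $\Delta^2 E(\cdot,s)=(s^2-1)^2E(\cdot,s)$, integrate by parts twice to move $\Delta^2$ onto $f$, replace $E$ by $E^T$ on the support of $f$ (and hence of $\Delta^2 f$), and conclude by Cauchy--Schwarz. The only cosmetic difference is that you derive the eigenfunction property termwise from $\Delta\lambda^{1+s}=(s^2-1)\lambda^{1+s}$, whereas the paper simply cites Elstrodt--Grunewald--Mennicke for it.
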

\begin{proof}
As a consequence of  (\ref{LemasPrevios4}) we know that for all $s \in \C$ one has
\begin{equation}\label{Pre6MM1}
       \mathfrak{M} (f,s) \; = \; \frac{1}{ \abs{\Lambda_D}}
        \int_{ M_D    } f(P, e_1 ) \, E (P,s) \,  \frac{dx dy d\lambda}{ \lambda^3}.
\end{equation}

\bigskip
On the other hand, if $s$ is not a pole of $E  ( P,s  )$ the following functional equation holds:
\begin{equation}\label{Pre6MM2}
       \Delta^2  E  ( P,s  ) \; = \; (s^2-1)^2 \,    E  ( P,s  ).
\end{equation}
See theorem 1.2,  page 232, in \cite{Els98} for a proof of this fact.

%\bigskip
\bigskip
Substituting (\ref{Pre6MM2}) in (\ref{Pre6MM1}) it follows that if $s \neq \pm 1$ and $s$ is not a pole of $E  ( P,s  )$ then
\[
   \displaystyle \mathfrak{M} (f, s) \; = \;  \frac{1}{(s^2-1)^2 \cdot  \abs{\Lambda_D}}
   \int_{ M_D   } f(P, e_1 ) \,  \Delta^2 E (P,s) \, \frac{dx dy d\lambda}{ \lambda^3},
\]
integrating by parts,
\[
   \displaystyle \mathfrak{M} (f, s) \; = \;  \frac{1}{(s^2-1)^2 \cdot  \abs{\Lambda_D}}
   \int_{ M_D } \Delta^2 f(P, e_1 ) \,   E (P,s) \, \frac{dx dy d\lambda}{ \lambda^3}.
\]

\bigskip
Let $T=T(f) > 0$ be sufficiently big such that the support $f$ is contained in the set $\big \{(P,\vec{v}(\vartheta, \varphi)_P) ; z \in \C, \; \lambda \leq T \big  \} $, since $f\big( P, \vec{v}(\vartheta, \varphi)_P  \big)=0$ if $\lambda > T$ and $E^T(P,s) = E(P,s)$ for
$\lambda \leq T$ it follows that
\begin{equation*}%\label{Mellin10}
  \mathfrak{M} (f,s)  \; = \;  \frac{1}{(s^2-1)^2 \cdot  \abs{\Lambda_D}} \, \int_{ M_D } \Delta^2 f(P, e_1 ) \, E^T (P,s) \,  \frac{dx dy d\lambda}{ \lambda^3}.
\end{equation*}
By Cauchy-Schwarz inequality
\[
\abs {\mathfrak{M} (f,s) } \; \leq \; \frac{1}{\abs{s^2-1 }^2 \cdot  \abs{\Lambda_D} } \; \norm{ \Delta^2 f }^2 \;   \norm{ E^T ( \cdot, s ) }^2.
\]
\end{proof}

% =====================================

\begin{lemm}\label{LemasPrevios7}
If $f \in C_{0}^\infty(T_1 (M_D))$, then:
\[
       \text{Res} \,  \big( \mathfrak{M} (f,s) \, \lambda^{1-s}, s=1 \big)  \; = \;  \nu (f).
\]
\end{lemm}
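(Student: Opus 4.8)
The plan is to reduce the residue to the single spherical-harmonic mode $l=k=m=0$ and then to recognise the surviving term as the fibrewise average that assembles the Liouville measure. First I would observe that $\lambda^{1-s}$ is entire in $s$ and equals $1$ at $s=1$, so multiplying by it does not change the residue: $\mathrm{Res}_{s=1}\big(\mathfrak{M}(f,s)\,\lambda^{1-s}\big)=\mathrm{Res}_{s=1}\mathfrak{M}(f,s)$. Thus it suffices to compute $\mathrm{Res}_{s=1}\mathfrak{M}(f,s)$, and for this I would start from the expansion supplied by Proposition \ref{PImp}, that is, from the triple sum over $l,m,k$ of $Y_m^l(e_1)\int_{F_D}\widehat{f}_k^l(P)\,H_{km}^l(P,s)\,\tfrac{dx\,dy\,d\lambda}{\lambda^3}$.

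Next, by Theorem \ref{conje} every series $E_{km}^l(P,s)$ with $l>0$ extends analytically to $\Re(s)\ge 1$, and since $H_{km}^l=e^{-i(k+m)\pi}E_{km}^l$ by (\ref{relacionHE}), the same holds for the $H_{km}^l$. Hence only the term $l=k=m=0$ can contribute to the residue at $s=1$. Using $H_{00}^0(P,s)=E_{00}^0(P,s)=\tfrac{1}{[\Gamma_{\infty}:\Gamma'_{\infty}]}E(P,s)$ (Theorem \ref{conje}) together with $Y_0^0\equiv 1/\sqrt{4\pi}$, the factor $[\Gamma_{\infty}:\Gamma'_{\infty}]$ cancels against the prefactor in Proposition \ref{PImp}, and I expect to arrive at
\[
\mathrm{Res}_{s=1}\mathfrak{M}(f,s)=\frac{1}{\sqrt{4\pi}\,\abs{\Lambda_D}}\int_{F_D}\widehat{f}_0^0(P)\,\mathrm{Res}_{s=1}E(P,s)\,\frac{dx\,dy\,d\lambda}{\lambda^3}.
\]

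At this stage I would invoke two classical facts. The residue $\mathrm{Res}_{s=1}E(P,s)$ is constant in $P$ (it is the residual constant function), equal to the residue of the scattering coefficient $\phi(s)$ of (\ref{phiCoef0}); evaluating $\mathrm{Res}_{s=1}\phi(s)=\tfrac{\pi}{\abs{\Lambda_D}}\,\tfrac{\mathrm{Res}_{s=1}\zeta_D}{\zeta_D(2)}$ with the class-number formula for $\mathrm{Res}_{s=1}\zeta_D$ and the volume formula of Theorem \ref{TeoGroups1} gives $\mathrm{Res}_{s=1}E(P,s)=\abs{\Lambda_D}/\mathrm{Vol}(M_D)$. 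The coefficient is $\widehat{f}_0^0(P)=\tfrac{1}{\sqrt{4\pi}}\int_{\S^2}f\big(P,\vec{v}(\vartheta,\varphi)_P\big)\,\sin\vartheta\,d\vartheta\,d\varphi$ by (\ref{LP8}), and it is $\Gamma$-invariant by Lemma \ref{LP} with $l=0$, so the integral over $F_D$ is well defined. Substituting these and recognising $\int_{F_D}\int_{\S^2}f\,\sin\vartheta\,d\vartheta\,d\varphi\,\tfrac{dx\,dy\,d\lambda}{\lambda^3}=4\pi\,\mathrm{Vol}(M_D)\,\nu(f)$ from Definition \ref{Defnu} should yield exactly $\nu(f)$, since the constant collapses to $\tfrac{1}{4\pi\,\mathrm{Vol}(M_D)}$.

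The main obstacle is the justification of taking the residue term by term: I must interchange $\mathrm{Res}_{s=1}$ with the infinite Laplace sum over $l$ and with the integral over the non-compact, cusped fundamental domain $F_D$. Because $f$ is smooth with compact support, the coefficients $\widehat{f}_k^l(P)$ decay faster than any power of $l$ uniformly on the support of $f$, while for $l>0$ the functions $H_{km}^l(\cdot,s)$ are holomorphic and locally bounded near $s=1$; this should give uniform convergence on a punctured neighbourhood of $s=1$ and legitimise the interchange, leaving only the $(0,0,0)$ pole. The second delicate point is pinning down the residue constant $\abs{\Lambda_D}/\mathrm{Vol}(M_D)$, but this follows directly from (\ref{phiCoef0}) and the volume formula and is a finite computation rather than a conceptual difficulty.
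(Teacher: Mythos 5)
Your proposal is correct and follows essentially the same route as the paper's own proof: expand $\mathfrak{M}(f,s)$ via Proposition \ref{PImp}, use Theorem \ref{conje} together with the relation (\ref{relacionHE}) to conclude that every mode with $l>0$ is regular at $s=1$, and evaluate the surviving $l=k=m=0$ term using $\mathrm{Res}_{s=1}E(P,s)=\abs{\Lambda_D}/\mathrm{Vol}(M_D)$, $Y_0^0=\tfrac{1}{2\sqrt{\pi}}$ and (\ref{LP8}) to recognise $\nu(f)$. The only minor differences are that the paper simply cites \cite{Els98} for the residue constant (its equation (\ref{Acerca3})) rather than re-deriving it from $\phi(s)$ in (\ref{phiCoef0}) and the class-number formula, and that it carries out the term-by-term residue interchange silently, whereas you make explicit the justification (rapid decay of $\widehat{f}_k^{\,l}$ in $l$ and local boundedness of $H_{km}^l(\cdot,s)$ near $s=1$).
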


\begin{proof}
Let $l \in \N$, $k,m \in \Z$ such that $k,m \in [-l,l]$. From  (\ref{Hlkm}),  it follows that if  $\Re(s) > 1$ then
\begin{equation}\label{Acerca1}
  H_{km}^l (P,s)   \; = \;  e^{-i (k+m) \pi}  \cdot E_{km}^l (P,s).
\end{equation}

%\bigskip
By theorem (\ref{conje}) one has the analytic or meromorphic continuation of the series $E_{km}^l (P,s)$, therefore using equation (\ref{Acerca1}) we conclude that the series
$H_{km}^l (P,s) $  also admits analytic or meromorphic continuation to $\C$ in the variable $s$. Hence equation (\ref{Acerca1}) is valid for all $s \in \C$. Taking residues at $s=1$:
\begin{equation}\label{Acerca2}
 \text{Res} \, \big( H_{km}^l (P,s), s=1 \big)    \; = \;   e^{-i (k+m) \pi} \cdot  \text{Res} \, \big( E_{ac}^l (P,s), s=1 \big).
\end{equation}

\bigskip
One has  $E (P,s) = [\Gamma_{\infty} : \Gamma'_{\infty} ] \cdot E_{00}^0 (P,s)$. The residue at $s=1$ is a classical result,
\begin{equation}\label{Acerca3}
        \text{Res} \, \big( E(P,s) , s=1  \big)  \; = \;   \frac{ \abs{\Lambda_D} }{ \text{Vol}(M_D) }.
\end{equation}
See theorem 1.11, page 244,  in \cite{Els98} for a proof.
Theorem (\ref{conje}) implies
\begin{equation}\label{Acerca3.5}
 \text{Res} \, \big( E_{km}^l (P,s), s=1 \big)    \; = \;
\left\{
	\begin{array}{ll}
		\tfrac{1}{[\Gamma_{\infty} : \Gamma'_{\infty} ]} \cdot \frac{ \abs{\Lambda_D} }{ \text{Vol}(M_D) }   & \mbox{if } l=0 \\
		0            & \mbox{if } l > 0.
	\end{array}
\right.
\end{equation}

%\bigskip
From (\ref{Acerca2}), (\ref{Acerca3}) and (\ref{Acerca3.5}) we conclude:
\begin{equation}\label{Acerca4}
 \text{Res} \, \big( H_{km}^l (P,s), s=1 \big)   \; = \;
\left\{
	\begin{array}{ll}
		\tfrac{1}{[\Gamma_{\infty} : \Gamma'_{\infty} ]} \cdot  \frac{ \abs{\Lambda_D} }{ \text{Vol}(M_D) }   & \mbox{if } l=0 \\
		0            & \mbox{if } l > 0.
	\end{array}
\right.
\end{equation}

%\bigskip
The formula that relates $\mathfrak{M}(f,s)$ with the series $H_{km}^l (P,s)$ is given by proposition  (\ref{PImp})
\begin{equation}\label{Acerca6}
    \mathfrak{M}(f,s)   = \frac{[\Gamma_{\infty} : \Gamma'_{\infty} ]}{ \abs{\Lambda_D} } \sum_{l=0}^\infty  \sum_{m=-l}^l \sum_{k=-l}^l Y_m^l ( e_1 )
                 \int_{F_D}   \widehat{f}_k^{\, l} ( P) \;  H_{km}^l (P,s)   \, \frac{dx dy d\lambda}{\lambda^3},
                 \; \; \; \Re(s) > 1.
\end{equation}

\bigskip
Since the Mellin transform $\mathfrak{M}(f,s)$ admits a meromorphic continuation to all of $\C$ in the variable $s$,  the formula (\ref{Acerca6}) is valid for all $s \in \C$. Taking residues at $s=1$, we obtain: % en la formula (\ref{Acerca6}),
\begin{align}
\text{Res} \, \big( \mathfrak{M}(f,s) , s=1 \big)  & \; = \;  \frac{[\Gamma_{\infty} : \Gamma'_{\infty} ]}{ \abs{\Lambda_D} }
      \sum_{l=0}^\infty  \sum_{m=-l}^l \sum_{k=-l}^l Y_m^l ( e_1 )
                 \int_{F_D}   \widehat{f}_k^{\, l} ( P) \;
          \text{Res} \, \big( H_{km}^l (P,s) , s=1 \big)  \, \frac{dx dy d\lambda}{\lambda^3}      &&  \nonumber \\
             & \; = \;  \frac{[\Gamma_{\infty} : \Gamma'_{\infty} ]}{ \abs{\Lambda_D} } \,  Y_0^0 ( e_1 )
                 \int_{F_D}   \widehat{f}_0^{\, 0} ( P) \; \frac{1}{[\Gamma_{\infty} : \Gamma'_{\infty} ]} \;
          \frac{ \abs{\Lambda_D} }{ \text{Vol}(M_D) }  \, \frac{dx dy d\lambda}{\lambda^3},
          \hspace{1.8cm} \text {by (\ref{Acerca4})}        && \nonumber
\end{align}
but $ Y_0^0  (\vartheta, \varphi)  = \tfrac{1}{2} \, \tfrac{1}{\sqrt{\pi}}$, then
\begin{equation}\label{Acerca7}
  \text{Res} \, \big( \mathfrak{M}(f,s) , s=1 \big) \; = \; \frac{ 1 }{ 2 \sqrt{\pi} \, \text{Vol}(M_D) }
                \int_{F_D}   \widehat{f}_0^{\, 0} ( P) \, \frac{dx dy d\lambda}{\lambda^3}.
\end{equation}

%\bigskip
By (\ref{LP8})
\begin{align}
 \widehat{f}_0^{\, 0} (P)  & \; = \;  \frac{1}{ 2 \sqrt{\pi}} \int_{\S^2} f \big( P, \vec{v}( \vartheta, \varphi )_P   \big)
       \,  \sin \vartheta \, d \vartheta  d \varphi.            && \label{Acerca8}
\end{align}

%\bigskip
Substituting (\ref{Acerca8}) in (\ref{Acerca7}),
\begin{align}
\text{Res} \, \big( \mathfrak{M}(f,s) , s=1 \big)    & \; = \;  \frac{ 1 }{ 4 \pi \text{Vol}(M_D) }
                \int_{F_D}     \int_{\S^2}
      f \big( P, \vec{v}( \vartheta, \varphi )_P   \big) \;  \frac{\sin \vartheta \,  dx  dy  d\lambda  d\vartheta  d\varphi}{\lambda^3} \; = \;  \nu (f). &&  \nonumber
\end{align}
\end{proof}

% =======================================

\begin{lemm}\label{LemasPrevios8}
If $f \in C_{0}^\infty(T_1 (M_D))$, then:
\[
       \nu (\lambda) (f) \; = \; \frac{1}{2\pi i} \, \int_{2-i \infty}^{2+i \infty} \mathfrak{M} (f,s+1) \, \lambda^{-s} \, ds
          \; = \; \frac{1}{2\pi i} \, \int_{3-i \infty}^{3+i \infty} \mathfrak{M} (f,s) \, \lambda^{1-s} \, ds.
\]
\end{lemm}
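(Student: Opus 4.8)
The plan is to recognize the identity as an instance of Mellin inversion. Writing $g(\lambda):=\nu(\lambda)(f)$ for the function of $\lambda>0$ from Definition (\ref{nulambda}), Definition (\ref{Mellinf}) says precisely that
\[
\mathfrak{M}(f,s)=\int_0^\infty g(\lambda)\,\lambda^{s-2}\,d\lambda,
\]
so that $\mathfrak{M}(f,s+1)=\int_0^\infty g(\lambda)\,\lambda^{s-1}\,d\lambda$ is the classical Mellin transform of $g$. Thus the two asserted formulas are nothing but the Mellin inversion of $g$ along the vertical line $\Re(s)=2$, followed by a substitution $s\mapsto s+1$ that shifts the contour to $\Re(s)=3$.

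First I would record the properties of $g$ that make the inversion legitimate. Since $f$ is smooth with compact support on $T_1(M_D)$, differentiating under the integral over the flat torus $\R^2/\Lambda_D$ shows that $g$ is $C^\infty$ on $(0,\infty)$ and bounded by $\norm{f}_\infty$; moreover, because the single cusp sits at $\lambda\to\infty$, the compact support of $f$ forces $g(\lambda)=0$ for $\lambda$ larger than some constant $C$, exactly as in the proof of lemma (\ref{LemasPrevios3}). Consequently $\mathfrak{M}(f,s+1)$ converges absolutely for $\Re(s)>0$, so the line $\Re(s)=2$ lies well inside its strip of convergence.

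The cleanest way to carry out the inversion is to pass to Fourier analysis via the change of variable $\lambda=e^{-u}$. Setting $h(u):=g(e^{-u})\,e^{-2u}$, one checks that $h\in L^1(\R)$ (it vanishes for $u<-\log C$ and is dominated by $\norm{f}_\infty\,e^{-2u}$ as $u\to+\infty$) and that its Fourier transform is exactly $\widehat h(t)=\mathfrak{M}(f,3+it)$. Fourier inversion then gives $h(u)=\tfrac{1}{2\pi}\int_\R \widehat h(t)\,e^{iut}\,dt$, and undoing the substitution (using $e^{2u}=\lambda^{-2}$, $e^{iut}=\lambda^{-it}$, $dt=ds/i$) reproduces precisely the first displayed identity along $\Re(s)=2$. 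For this to hold at every point (not merely almost everywhere) I would invoke the continuity of $g$ together with $\widehat h\in L^1(\R)$; the latter is the only genuine analytic input and follows from the smoothness of $f$, since integrating by parts against the invariant Laplacian as in lemma (\ref{LemasPrevios6}) produces the factor $\abs{s^2-1}^{-2}$, whence $\mathfrak{M}(f,3+it)=O(\abs{t}^{-4})$ times a polynomially bounded truncated-Eisenstein norm, which is integrable in $t$.

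Finally, the second equality is a purely formal change of variable: substituting $w=s+1$ moves the contour from $\Re(s)=2$ to $\Re(w)=3$, turns $\mathfrak{M}(f,s+1)$ into $\mathfrak{M}(f,w)$, and converts $\lambda^{-s}=\lambda^{1-w}$, giving the second integral after renaming $w$ back to $s$. I expect the only real obstacle to be the justification of pointwise Fourier inversion, i.e. the decay of $\mathfrak{M}(f,3+it)$ in $t$; everything else is bookkeeping, and that decay is already essentially packaged in lemma (\ref{LemasPrevios6}).
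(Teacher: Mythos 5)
Your proposal is correct and follows essentially the same route as the paper: the paper likewise sets $\rho(\lambda):=\nu(\lambda)(f)$, observes that $\tilde\rho(s)=\mathfrak{M}(f,s+1)$, invokes the Mellin inversion theorem on the line $\Re(s)=2$, and shifts the contour to $\Re(s)=3$ by the substitution $u=s+1$. The only difference is one of detail: the paper simply cites Mellin inversion, whereas you additionally verify its hypotheses (the $L^1$ conditions via the compact support of $f$ and the decay of $\mathfrak{M}(f,3+it)$ in the spirit of lemma (\ref{LemasPrevios6})), which is a welcome strengthening rather than a departure.
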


\begin{proof}
  Let $\rho : \R^{+} \longrightarrow \C$ given by
\[
   \rho  (\lambda) \; := \; \nu (\lambda) (f), \; \; \; \lambda >0.
\]

\bigskip
Since $f$ has compact support $\rho$ is zero when $\lambda \longrightarrow \infty$.
The Mellin transform $\rho$ is given by
\begin{align}
 \tilde{\rho}  (s)  & \; := \; \int_{0}^{\infty}  \rho  (\lambda) \, \lambda^{s-1} \, d\lambda
        \; = \;   \mathfrak{M}(f,s+1). && \text {by definition (\ref{Mellinf})} && \nonumber %        \label{TildeRho}
\end{align}

%\bigskip
By the Mellin inversion theorem,
\begin{align}
\nu (\lambda) (f) & \; = \; \frac{1}{2\pi i} \int_{2-i \infty}^{2+i \infty} \tilde{\rho}(s)  \, \lambda^{-s} \, ds
     \; = \;  \frac{1}{2\pi i} \int_{2-i \infty}^{2+i \infty} \mathfrak{M}(f,s+1) \, \lambda^{-s} \, ds.
                             && \nonumber
\end{align}

%\bigskip
The vertical line $\Re(s)=2$ can be changed to the vertical line $\Re(s)=3$ making a change of variable  $s+1 =u$, then $ s = 2 \mp i \infty \; \Longleftrightarrow \; u = 3 \mp i \infty$.
\end{proof}

\begin{lemm}\label{LemasPrevios10}
Let $f \in C_{0}^{\infty} \big( T_1 (M_D) \big)$, the following equality hold:
\[
     \int_{-\infty}^{\infty}  \abs {\mathfrak{M} (f,it) }  \, dt \; < \; \infty.
\]
\end{lemm}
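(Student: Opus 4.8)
The plan is to combine the \emph{a priori} bound of Lemma (\ref{LemasPrevios6}) with a growth estimate for the truncated Eisenstein series along the line $\Re(s)=0$, treating the point $s=0$ separately. First I would observe that $\mathfrak{M}(f,\cdot)$ is holomorphic on the whole line $\Re(s)=0$. Indeed, by Lemma (\ref{LemasPrevios4}) its poles coincide with those of $E(P,s)$, hence with those of $\phi(s)$, and by theorem (\ref{conje}) together with (\ref{phiCoef0}) these all lie in the real interval $(0,1]$. The apparent pole of $\phi$ at $s=0$ coming from the factor $\tfrac{1}{s}$ is cancelled by the simple zero of $1/\zeta_D(s+1)$ produced by the pole of $\zeta_D$ at $s=1$, so $\phi$, and therefore $\mathfrak{M}(f,\cdot)$, is regular at $s=0$ as well. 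Consequently $t\mapsto\mathfrak{M}(f,it)$ is continuous on $\R$, so it is bounded on $\{\abs{t}\le 1\}$ and $\int_{\abs{t}\le 1}\abs{\mathfrak{M}(f,it)}\,dt<\infty$. It remains only to estimate the tail $\abs{t}>1$.

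On the tail I would apply Lemma (\ref{LemasPrevios6}) with $s=it$. There $\abs{s^2-1}^2=(1+t^2)^2$, so with $T=T(f)$ the fixed constant attached to the support of $f$ one obtains
\[
\abs{\mathfrak{M}(f,it)}\;\le\;\frac{\norm{\Delta^2 f}^2}{(1+t^2)^2\,\abs{\Lambda_D}}\;\norm{E^T(\cdot,it)}^2 .
\]
The factor $(1+t^2)^{-2}$ already decays like $t^{-4}$, so it suffices to show that $\norm{E^T(\cdot,it)}^2$ grows at most polynomially, indeed logarithmically, in $\abs{t}$.

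For this I would invoke the Maass--Selberg relation for the single cusp of $M_D$ (see \cite{Els98}), which writes $\norm{E^T(\cdot,it)}^2$ as a fixed multiple of $\log T$, plus a term proportional to $\tfrac{\phi'}{\phi}(it)$, plus boundary terms of the shape $\phi(\pm it)\,T^{\mp 2it}/(2it)$. On the line $\Re(s)=0$ the functional equation $\phi(s)\phi(-s)=1$ together with Schwarz reflection $\phi(\bar s)=\overline{\phi(s)}$ (valid since $\zeta_D$ has real Dirichlet coefficients) gives $\abs{\phi(it)}=1$; as $\abs{T^{\pm 2it}}=1$ too, the boundary terms are $O(1/\abs{t})$, while the classical bound for the logarithmic derivative of $\zeta_D$ on vertical lines yields $\tfrac{\phi'}{\phi}(it)=O(\log\abs{t})$. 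Hence $\norm{E^T(\cdot,it)}^2=O(\log\abs{t})$ as $\abs{t}\to\infty$, and therefore
\[
\int_{\abs{t}>1}\abs{\mathfrak{M}(f,it)}\,dt\;\le\;C\int_{\abs{t}>1}\frac{\log\abs{t}}{(1+t^2)^2}\,dt\;<\;\infty .
\]
Combining the two ranges proves the lemma.

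The main obstacle is precisely this growth estimate for $\norm{E^T(\cdot,it)}^2$: everything rests on having $\abs{\phi(it)}=1$ and at most logarithmic growth of $\phi'/\phi$ on the critical line, which is exactly what the Maass--Selberg relation together with standard analytic bounds for $\zeta_D$ provide. I would emphasize, however, that the argument is robust: any at-most-polynomial bound $\norm{E^T(\cdot,it)}^2=O(\abs{t}^N)$ with $N<3$ is already defeated by the $(1+t^2)^{-2}$ factor, so the sharp logarithmic estimate is convenient but not strictly necessary for the integrability claimed here.
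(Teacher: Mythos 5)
Your proof is correct in outline, but it takes a genuinely different route from the paper's at the crucial step. Both arguments start from Lemma (\ref{LemasPrevios6}), so everything reduces to controlling $\norm{E^T(\cdot,it)}^2$ against the factor $(1+t^2)^{-2}$. You do this \emph{pointwise}: Maass--Selberg plus the functional equation ($\abs{\phi(it)}=1$) plus a bound on $\tfrac{\phi'}{\phi}(it)$, the latter coming from bounds on $\zeta_D'/\zeta_D$ on and near the $1$-line. The paper instead uses only an \emph{averaged} bound: it quotes formula (4.29) of \cite{Els98}, which writes $\norm{E^T(\cdot,it)}^2 = c_\infty\, w(t)+h(t)$ with $w(t)\geq 1$, $h(t)=\text{O}(1)$, and $\int_{-R}^{R}w(u)\,du=\text{O}(R^3)$, and then integrates by parts in $t$ to convert this cubic average bound into integrability against $(1+t^2)^{-2}$. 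The trade-off is real: your route gives a sharper, pointwise decay of the integrand and cleanly settles the behaviour near $t=0$ (which the paper passes over silently with its ``$t\neq 0$''), but it imports analytic number theory --- non-vanishing and a zero-free region for $\zeta_D$ near $\Re(s)=1$ --- whereas the averaged bound in \cite{Els98} follows from soft pole-counting for the scattering term and needs no zero-free region at all. One inaccuracy to flag: your claim $\tfrac{\phi'}{\phi}(it)=\text{O}(\log\abs{t})$ is stronger than what the classical de la Vall\'ee Poussin-type zero-free region for $\zeta_D$ gives, which is $\text{O}(\log^2\abs{t})$ (the first power of the logarithm is not available unconditionally by classical methods); as you yourself point out, any bound of polynomial degree strictly less than $3$ suffices against $(1+t^2)^{-2}$, so this does not damage the conclusion, but the citation should be stated as $\log^2$ rather than $\log$.
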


\begin{proof}
If $t \neq 0$ lemma (\ref{LemasPrevios6}) implies
\[
       \abs { \mathfrak{M} (f,it) } \; \leq \; \frac{ c_f}{(t^2+1)^2} \;  \norm{ E^T ( \cdot, it ) }^2,
\]
where $c_f \; := \; \tfrac{\norm{ \Delta^2 f }^2}{ \abs{\Lambda_D} }$ and $T=T(f)$ is a  sufficiently large  constant. By formula (4.29), page 290, in  \cite{Els98}, we know that
\begin{equation*}%\label{Prev10M2}
       \norm{ E^T ( \cdot, it ) }^2 \; = \; \int_{F_D}  \abs{ E^T (z+\lambda j,it) }^2 \, \frac{dx dy d\lambda}{ \lambda^3} \; = \;
       c_\infty \, w(t) \, + \, h(t),
\end{equation*}
where $c_\infty$ is a constant,  $w,h: \R \longrightarrow \R$ are functions  satisfying

\begin{equation}\label{Prev10M3}
       w(t) \; \geq  \; 1 \; \; \; \forall t \in \R.
\end{equation}
\vspace{-0.3cm}
\begin{equation}\label{Prev10M7}
     \displaystyle  \int_{-R}^R  w(u) du  \; = \; \text{O} (R^3) \; \; \text{when } R \rightarrow \infty.
\end{equation}
\begin{equation}\label{Prev10M4}
      h(t) \; = \; \text{O}(1) \; \; \text{when } \abs{t} \rightarrow \infty.
\end{equation}

\bigskip
Therefore,
\begin{equation}\label{Prev10M5}
     \displaystyle \int_{-\infty}^\infty   \abs {\mathfrak{M} (f, it) } \, dt  \; \leq \;
            c_f \,  c_\infty   \int_{-\infty}^\infty  \frac{w(t)}{(t^2+1)^2} \, dt \, + \,
            c_f    \int_{-\infty}^\infty  \frac{h(t)}{(t^2+1)^2} \, dt.
\end{equation}

\bigskip
In the first integral in the right hand side in (\ref{Prev10M5}), we obtain  integrating by parts,
\begin{equation}\label{Prev10M6}
     \displaystyle  \int_{-x}^x  \frac{w(t)}{(t^2+1)^2} \, dt  \; = \; \frac{1}{(x^2+1)^2} \, \int_{-x}^x  w(u)  du
          \, + \, 4 \,  \int_{-x}^x  \Bigg[ \int_0^t  w(u)  du     \Bigg] \, \frac{t}{(t^2+1)^3} \,  dt.
\end{equation}
using the properties (\ref{Prev10M3})  and (\ref{Prev10M7}) of the function $w(t)$ and taking the limit $x \rightarrow \infty$ in (\ref{Prev10M6}) one sees that
\begin{equation}\label{Prev10M13}
     \int_{-\infty}^{\infty}  \frac{w(t)}{(t^2+1)^2} \, dt \;    <  \; \infty.
\end{equation}

%\bigskip
From (\ref{Prev10M4}) it follows that
\begin{equation}\label{Prev10M17}
   \int_{-\infty}^\infty  \frac{h(t)}{(t^2+1)^2} \, dt \; < \; \infty.
\end{equation}

\bigskip
Since the integrals in  (\ref{Prev10M13}) and (\ref{Prev10M17}) are bounded by (\ref{Prev10M5}) we  obtain:
\[
     \displaystyle \int_{-\infty}^\infty   \abs {\mathfrak{M} (f, it) } \, dt \,  \; < \; \infty.
\]
\end{proof}

% &&&&&&&&&&&&&&&&&&&&&&&&&&&&&&&&&&&&&&&&&&&&&&&&&&&&&&

\begin{lemm}\label{LemasPrevios11}
Let $f \in C_{0}^{\infty} \big( T_1 (M_D) \big)$, then:
\[
\frac{1}{2\pi i} \int_{-i \infty}^{+i \infty} \mathfrak{M} (f,s) \, \lambda^{1-s} \, ds  \; = \; o ( \lambda )   \; \; \; \text{when }
       \lambda \rightarrow 0.
\]
\end{lemm}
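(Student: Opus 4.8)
The plan is to parametrize the contour $\Re(s)=0$, factor out a power of $\lambda$, and recognize what remains as a Fourier transform to which the Riemann--Lebesgue lemma applies; the integrability hypothesis needed is exactly lemma (\ref{LemasPrevios10}). Concretely, writing $s=it$ with $t\in\R$ gives $ds=i\,dt$, so the prefactor $\tfrac{1}{2\pi i}$ cancels the $i$, and $\lambda^{1-s}=\lambda\,e^{-it\ln\lambda}$ lets me extract the factor $\lambda$:
\[
\frac{1}{2\pi i} \int_{-i \infty}^{+i \infty} \mathfrak{M} (f,s) \, \lambda^{1-s} \, ds
   \; = \; \lambda \cdot \frac{1}{2\pi} \int_{-\infty}^{\infty} \mathfrak{M}(f,it)\, e^{-it\ln\lambda}\,dt.
\]
It therefore suffices to prove that the bracketed integral tends to $0$ as $\lambda\to 0^+$, for then the left-hand side is $\lambda\cdot o(1)=o(\lambda)$. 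I note that on the line $\Re(s)=0$ there are no poles of $\mathfrak{M}(f,s)$ (its poles $s_1,\dots,s_p,1$ all lie in $(0,1]$), so $t\mapsto\mathfrak{M}(f,it)$ is a well-defined continuous function there.

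Next I would set $u=\ln\lambda$, so that $u\to-\infty$ as $\lambda\to 0^+$, and observe that
\[
u \; \longmapsto \; \frac{1}{2\pi}\int_{-\infty}^{\infty} \mathfrak{M}(f,it)\,e^{-itu}\,dt
\]
is, up to the normalizing constant, the Fourier transform in $u$ of the function $t\mapsto\mathfrak{M}(f,it)$. By lemma (\ref{LemasPrevios10}) we have $\int_{-\infty}^{\infty}\abs{\mathfrak{M}(f,it)}\,dt<\infty$, so this function lies in $L^1(\R)$, and the Riemann--Lebesgue lemma guarantees that its Fourier transform tends to $0$ as $\abs{u}\to\infty$. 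In particular it tends to $0$ as $u\to-\infty$, that is, as $\lambda\to 0^+$.

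Putting the two steps together yields $\tfrac{1}{2\pi i}\int_{-i\infty}^{+i\infty}\mathfrak{M}(f,s)\,\lambda^{1-s}\,ds=\lambda\cdot o(1)=o(\lambda)$ as $\lambda\to 0$, which is the assertion. The only substantive ingredient is the $L^1$ bound of lemma (\ref{LemasPrevios10}), whose proof rested on the truncated Eisenstein series estimates of \cite{Els98}; granting that, the present lemma is an essentially immediate consequence of Riemann--Lebesgue, so I anticipate no real obstacle beyond correctly handling the change of variables $u=\ln\lambda$ and the orientation of the integration path.
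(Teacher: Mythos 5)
Your proof is correct and is essentially identical to the paper's: both substitute $s=it$, pull out the factor $\lambda$ from $\lambda^{1-it}$, and apply the Riemann--Lebesgue lemma to $t\mapsto\mathfrak{M}(f,it)$ (integrable by Lemma \ref{LemasPrevios10}) under the change of variable $u=\pm\log\lambda$. The only difference is the sign convention ($u=\ln\lambda\to-\infty$ versus the paper's $u=-\log\lambda\to+\infty$), which is immaterial.
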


\begin{proof}
By lemma (\ref{LemasPrevios10}) we can apply Riemann-Lebesgue lemma to the function $\mathfrak{M} (f,it)$, so that
\[
     \displaystyle  \lim_{u \rightarrow \infty} \int_{-\infty}^\infty \mathfrak{M} (f,it) \, e^{iut}  \, dt \; = \; 0.
\]

%\bigskip
Changing variable $u=-log \lambda$, it follows that $e^{iut} =  \lambda^{-it}$, in addition $\lambda\longrightarrow 0 \; \Longleftrightarrow  \; u \longrightarrow \infty$.
One has
\begin{equation}\label{Prev11M2}
    \displaystyle  \lim_{\lambda \rightarrow 0} \int_{-\infty}^\infty \mathfrak{M} (f,it) \, \lambda^{-it}  \, dt \; = \; 0.
\end{equation}

\bigskip
If we consider now the integral in the lemma and the change of variable $s=it$, so that $s = \mp i \infty \; \Longleftrightarrow \; t = \mp \infty$, the integral is transformed as follows:
\begin{align}
\frac{1}{2\pi i} \int_{-i \infty}^{+i \infty} \mathfrak{M} (f,s) \, \lambda^{1-s} \, ds  & \; = \;
          \frac{1}{2\pi i}  \int_{ -\infty}^{+ \infty} \mathfrak{M} (f,it) \, \lambda^{1-it} \, i dt   \; = \;
             o (\lambda)   \; \; \text{when} \, \lambda \rightarrow 0.     &&  \text {by (\ref{Prev11M2})} \nonumber
\end{align}
\end{proof}

\section{Final remarks}
We  have shown that if we consider the one-parameter family of normalized Lebesgue measures
$\left\{\nu(\lambda)\right\}_{\lambda>0}$  given by (\ref{nulambda}) where $\nu(\lambda)$ supported on the Euclidean orbifold $\widetilde{T}_\lambda$ then there exists $r>0$ such that for every $\varepsilon>0$,  it holds for each smooth function with compact support $f:T_1(M_D) \longrightarrow \C$:
\[
\nu(\lambda)(f)=\nu(f)+0(\lambda^{r-\varepsilon})\quad\quad \text{as}\,\, \lambda\longrightarrow 0.
\]
A fundamental observation first by Don Zagier and then by Peter Sarnak is: {\bf finding the optimal $r$ is equivalent to the Riemann hypothesis for the Dedekind zeta function $\zeta_D$.}   
\begin{rem} The frame bundle $\mathcal{F}({M}_D)$, to which we referred in the introduction, fibers over the unit tangent bundle $T_1(M)$ with fibre the circle $\bS^1$ i.e., there exists a principal
circle fibration $p:\mathcal{F}({M}_D)\longrightarrow {T_1(M_D)}$. We can lift canonically the stable and unstable
foliations $\cF^{ss}$ and  $\cF^{uu}$ to $\mathcal{F}({M}_D)$ to obtain the complex horocycle foliations i.e.,
the orbits of the stable an unstable horocycle flows.  The orbifolds $\widetilde{T}_\lambda$  lift to the elliptic curves
which are the compact orbits of the complex stable horocycle flow. If $\mathbb E$ is one of the elliptic curves
which is a compact orbit of the complex unstable horocycle flow then the family
$\left\{g_{e^{it}}(\mathbb E)\right\}_{t\in\R}$ projects
to the same orbifold $\widetilde{T}_\lambda$ for some $\lambda$. From this by a simple disintegration of measures, we obtain:
\begin{coro}
If $\mathbb{E}$ is one of the elliptic curves which is a compact orbit of the complex unstable horocycle flow
and $\mu(t)$ is the Lebesgue probability measure supported on the elliptic curve $g_t(\mathbb E)$ ($t\in\R$)
then there exists $r>0$ such that for every $\varepsilon>0$,  it holds for each smooth function with compact support $f:T_1(M_D)\longrightarrow \C$:
\[
\nu(\lambda)(f)=\nu(f)+0(\lambda^{r-\varepsilon})\quad\quad \text{as}\,\, \lambda\longrightarrow 0.
\]
\end{coro}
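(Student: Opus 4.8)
The plan is to reduce the assertion to the equidistribution estimate already available for the measures $\nu(\lambda)$ on $T_1(M_D)$ (Theorem \ref{TPT}), by transporting the frame-bundle measures $\mu(t)$ down to the unit tangent bundle through the principal circle fibration $p:\mathcal{F}(M_D)\longrightarrow T_1(M_D)$ of the preceding Remark.

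First I would set up the dictionary between the complex geodesic flow and the fibration $p$. Writing the complex time as $T=t+i\tau$, the real part $t$ is the genuine geodesic flow, which by the bracket relation $[X,Y]=Y$ expands the unstable direction $Y$ by $e^{t}$, while the purely imaginary part $i\tau$ is left multiplication by the diagonal element $\mathrm{diag}(e^{i\tau/2},e^{-i\tau/2})\in\SU(2)$, whose image under $\Phi$ acts along the $\SO(2)$-fibres of $p$. Hence $p\circ g_{i\tau}=p$, so as $\tau$ runs over one period the translates $g_{i\tau}(\mathbb E)$ sweep out the whole preimage $p^{-1}(\widetilde{T}_\lambda)$ of a single compact horocycle torus; this is the content of the Remark that $\{g_{i\tau}(\mathbb E)\}_{\tau}$ projects onto one orbifold $\widetilde{T}_\lambda$. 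Since $g_t$ permutes the leaves of the complex horocycle foliation and rescales the height by $\lambda(t)=\lambda_0 e^{-t}$, I would identify $p\big(g_t(\mathbb E)\big)=\widetilde{T}_{\lambda(t)}$ with $\lambda(t)\to 0$ as $t\to+\infty$.

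Next I would prove the key pushforward identity $p_{*}\mu(t)=\nu(\lambda(t))$. Both $g_t(\mathbb E)$ and $\widetilde{T}_{\lambda(t)}$ are two-dimensional, whereas $\ker dp$ is the one-dimensional fibre direction generated by the rotation $g_{i\tau}$; the tangent plane to $g_t(\mathbb E)$ is spanned by the horocycle directions $Y$ and $iY$ and is therefore transverse to $\ker dp$. Thus $p|_{g_t(\mathbb E)}$ is a proper local diffeomorphism, i.e. a finite covering of flat tori, which up to a constant scale is a local isometry of the two flat structures and hence carries normalized Lebesgue measure to normalized Lebesgue measure. This realizes the disintegration of measures mentioned in the Remark and gives $p_{*}\mu(t)=\nu(\lambda(t))$.

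With this in hand the conclusion is immediate: for $f\in C^\infty_0(T_1(M_D))$ one has $\mu(t)(f\circ p)=(p_{*}\mu(t))(f)=\nu(\lambda(t))(f)$, so the behaviour of $\mu(t)$ is governed by that of $\nu(\lambda)$. Applying Theorem \ref{TPT},
\[
\nu(\lambda)(f)=\nu(f)+\sum_{a}\lambda^{1-s_a}\,\nu^a(f)+o(\lambda)\qquad(\lambda\to 0),
\]
and choosing $r:=1-s_1$, the smallest of the positive exponents $1-s_a$ (any $r<1$ will do when no poles $s_a\in(0,1)$ occur), yields $\nu(\lambda)(f)=\nu(f)+O(\lambda^{\,r-\varepsilon})$ for every $\varepsilon>0$ as $\lambda\to 0$, which is the claimed estimate. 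I expect the hard part to be the pushforward identity $p_{*}\mu(t)=\nu(\lambda(t))$: one must check carefully that the fibre direction of $p$ is exactly the frame-rotation direction $g_{i\tau}$, that $g_t(\mathbb E)$ meets it transversally, and that the induced covering matches the two flat probability measures, including the normalization coming from the units of $\mathcal{O}_D$ that create the orbifold points of $\widetilde{T}_\lambda$.
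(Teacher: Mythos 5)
Your proposal is correct and follows essentially the same route as the paper: the paper's entire justification is the preceding Remark (the circle fibration $p:\mathcal{F}(M_D)\longrightarrow T_1(M_D)$, the fact that the family of imaginary-time translates of $\mathbb{E}$ projects to a single torus $\widetilde{T}_\lambda$, and ``a simple disintegration of measures'') combined with the equidistribution estimate already established for $\nu(\lambda)$ via Theorem \ref{TPT}. Your write-up simply makes explicit what the paper leaves implicit, namely the identification of the fibre direction of $p$ with the imaginary-time geodesic flow and the pushforward identity $p_{*}\mu(t)=\nu(\lambda(t))$.
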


\end{rem}

A possible continuation of the present work is to study non-arithmetical 3-dimensional hyperbolic manifolds of finite
volume and one cusp, for instance the complement of knots in $\S^3$. We know by Thurston's work \cite{Th} that if $K\in\S^3$ is not a satellite knot or a torus knot then the complement of $K$, $M_K:=\S^3\,\setminus\, K$, has a hyperbolic structure of finite volume
and one cusp. The only arithmetic knot is the figure-eight knot \cite{Reid} which corresponds to a finite covering of the orbifold
${M}_{D}= \H^3 /  \widetilde{\Gamma}_{D}$ with $D=3$, \ie the Bianchi group corresponding to the imaginary quadratic field $\Q(\sqrt{-3})$. Another interesting case is that of non-compact hyperbolic 3-manifolds of finite volume
with more than one cusp, for instance complements of hyperbolic links.

\bigskip
\subsection*{Acknowledgements}This paper contains part of the Ph. D. thesis of the first author. He would like to thank his advisor, the second author Alberto Verjovsky for his suggestion to work on this topic, his constant support, and sharing some of his ideas on this subject. Also for the first author (OR) were very important the seminars and advise of Dr. Santiago L\'opez de Medrano, and the help and patience of Dr. Gregor Weingart. He also thanks Dr. Fuensanta Aroca for a fellowship from her PAPIIT grant  IN108216. The second author (AV) benefited from a PAPIIT (DGAPA, Universidad Nacional Aut\'onoma de M\'exico) grant IN106817.
 Both authors would like to thank Professor Paul Garrett for his comments and references regarding Eisenstein series, and the anonymous referee for his/her suggestions.

\end{document}